\NeedsTeXFormat{LaTeX2e}
\documentclass[11pt]{amsart}
\usepackage{a4wide}
\usepackage{amssymb}
\usepackage{amsthm}
\usepackage{amsmath}
\usepackage{amscd}
\usepackage{mathrsfs}
\usepackage{bbm}
\usepackage[all]{xy}
\DeclareMathAlphabet{\mathpzc}{OT1}{pzc}{m}{it}
\usepackage{verbatim}



\numberwithin{equation}{section}

\theoremstyle{plain}
\newtheorem{theorem}{Theorem}[section]
\newtheorem{corollary}[theorem]{Corollary}
\newtheorem{lemma}[theorem]{Lemma}
\newtheorem{proposition}[theorem]{Proposition}
\theoremstyle{definition}
\newtheorem{definition}[theorem]{Definition}
\newtheorem{remark}[theorem]{Remark}

\theoremstyle{remark}

\newcommand{\A}{\mathbb{A}}
\newcommand{\R}{\mathbb{R}}
\newcommand{\Q}{\mathbb{Q}}
\newcommand{\Z}{\mathbb{Z}}
\newcommand{\N}{\mathbb{N}}
\newcommand{\C}{\mathbb{C}}

\renewcommand{\H}{\mathbb{H}}


\newcommand{\zxz}[4]{\begin{pmatrix} #1 & #2 \\ #3 & #4 \end{pmatrix}}

\newcommand{\leg}[2]{\left( \frac{#1}{#2} \right)}
\newcommand{\kzxz}[4]{\left(\begin{smallmatrix} #1 & #2 \\ #3 & #4\end{smallmatrix}\right) }

\newcommand{\im}{\operatorname{Im}}
\newcommand{\re}{\operatorname{Re}}

\newcommand{\odd}{\operatorname{oddity}}

\newcommand{\calA}{\mathcal{A}}

\newcommand{\calD}{\mathcal{D}}

\newcommand{\calF}{\mathcal{F}}
\newcommand{\calG}{\mathcal{G}}
\newcommand{\calH}{\mathcal{H}}

\newcommand{\calK}{\mathcal{K}}

\newcommand{\calM}{\mathcal{M}}

\newcommand{\calQ}{\mathcal{Q}}

\newcommand{\calS}{\mathcal{S}}
\newcommand{\calT}{\mathcal{T}}
\newcommand{\calU}{\mathcal{U}}

\newcommand{\calZ}{\mathcal{Z}}

\newcommand{\frake}{\mathfrak e}

\newcommand{\bs}{\backslash}

\newcommand{\SL}{\operatorname{SL}}
\newcommand{\GL}{\operatorname{GL}}

\newcommand{\Hom}{\operatorname{Hom}}
\newcommand{\Aut}{\operatorname{Aut}}

\newcommand{\SO}{\operatorname{SO}}

\newcommand{\supp}{\operatorname{supp}}
\newcommand{\sig}{\operatorname{sig}}

\newcommand{\ord}{\operatorname{ord}}
\newcommand{\id}{\operatorname{id}}

\begin{document}

\title[Vector valued automorphic forms]{On vector valued automorphic forms for the Weil representation}
\author{Oliver Stein}
\address{Fakult\"at f\"ur Informatik und Mathematik\\ Ostbayerische Technische Hochschule Regensburg\\Galgenbergstrasse 32\\93053 Regensburg\\Germany}
\email{oliver.stein@oth-regensburg.de}
\subjclass[2020]{11F27 11F25 20C08 11F41}

\begin{abstract}
  We develop a theory of vector valued automorphic forms associated to the Weil representation $\omega_f$ and  corresponding to vector valued modular forms transforming with the ``finite'' Weil representation $\rho_L$. For each prime $p$ we determine the structure of a vector valued spherical Hecke algebra depending on $\omega_f$, which acts on the space of automorphic forms.  
\end{abstract}

\maketitle

\section{Introduction}

Automorphic forms can be seen as a generalization of modular forms. They play a key role in defining (automorphic) $L$-functions and are thereby a vital part of the Langlands program. For all major types of modular forms a theory of automorphic forms has been developed, including the action of some suitable Hecke algebra. As a consequence, a standard $L$-function associated to common eigenform  in terms of its Satake parameters can be defined. The present paper is the first part of two articles which address the task of defining a standard $L$-function associated to a vector valued modular form transforming according to the Weil representation. 

This kind of  modular forms play an important role  many recent papers. The weakly holomorphic forms of this type serve as input to a singular theta lift, which maps them to meromorphic modular forms on orthogonal groups whose zeroes and poles are supported on special divisors and which possess an infinite product expansion. This theta lift is the celebrated Borcherds lift (\cite{Bo}, \cite{Br1}), which has many applications in geometry, algebra and in the theory of Lie algebras. For instance, it is an interesting and widely studied problem to classify  reflective automorphic forms and thereby reflective lattices and Kac-Moody algebras (see e. g. \cite{Sch1} and \cite{Wa}).   
 Most of the classical theory of modular forms has been established for these  modular forms over the past years (see e. g. \cite{Br1}, \cite{BS}, \cite{Br2}, \cite{St1} or \cite{Mue}). To the best of my knowledge, so far a rigorous theory of vector valued automorphic forms does not exist. 
This motivates the main objectives of the present paper 
\begin{enumerate}
\item[i)]
  to develop a  theory of vector valued automorphic forms corresponding to vector valued modular forms transforming according to the finite Weil representation,
\item[ii)]
to  determine the structure of a fitting local vector valued spherical Hecke algebra and to study its action on the space of automorphic forms. 
  \end{enumerate}
The second article in this series of papers then deals with the definition of a standard $L$-function for these vector valued modular forms and studies its analytic properties.

The paper at hand should be seen as the starting point for a more comprehensive study on vector valued automorphic forms. For one, vector valued modular forms for the Weil representation enjoy relations to scalar valued elliptic modular forms for $\Gamma_0(N)$ (cf. \cite{Sch}), $N$ the level of the lattice $L$ (see Section \ref{sec:preliminaries} for details), and to Jacobi forms of lattice index (see e. g. \cite{Wa}). For both of these types of modular forms exists a well established theory of automorphic forms (see e. g. \cite{Ge} and \cite{Mu1}). It would be interesting to study how exactly the vector valued automorphic forms are related to the elliptic automorphic forms and the Jacobi automorphic forms.\newline
Automorphic forms are closely connected to automorphic representations (cf. \cite{Ge}, Section 5). It should be worthwhile to investigate whether a similar theory can be build in the case of vector valued modular forms, and if so, what relations there are.

Let us describe the content of the paper in more detail. To this end, let $(L,(\cdot,\cdot))$ be an even lattice of even rank $m$ and type $(b^+, b^-)$ with (even) signature $\sig(L)=b^+-b^-$ and level $N$. Associated to the bilinear form $(\cdot,\cdot)$ there is a quadratic form $q$. The modulo 1 reduction of $(\cdot, \cdot)$ and $q$  defines a bilinear form and a quadratic form, respectively, on the discriminant form $D=L'/L$. Here $L'$ is the dual lattice of $L$. The Weil representation $\rho_L$ is a representation of $\Gamma = \SL_2(\Z)$ on the group ring $\C[D]$. As usual, denote with  $\Z_p$ the ring of $p$-adic integers. As will be explained later in the paper, $\rho_L$ is isomorphic to a finite dimensional subrepresentation of the  Weil representation $\omega_f=\bigotimes_{p<\infty}\omega_p$ (originally defined by Weil \cite{Wei}) of $\SL_2(\widehat{\Z})$  on a space $S_L$ (isomorphic to $\C[D]$). Here $\widehat{\Z}$ is defined by  $\prod_{p<\infty}\Z_p$.
We have the relation
\[
\rho_L(\gamma) = \omega_f(\gamma_f)
\]
for all $\gamma\in \Gamma$, where $\gamma_f$ is the projection of $\gamma$ into $\SL_2(\widehat{\Z})$. Based on this relation and the extension process of $\rho_L$ to a subgroup of $\GL_2(\Q)$ in \cite{BS}, Section 3, we transfer this process to the extension of $\omega_f$ to some subgroup of $\GL_2(\A_f)$, where $\A_f$ means the finite adeles. For the details see Chapter \ref{subsec:weil_repr_adelic}. 

For $\kappa\in \Z$, a vector valued modular form of weight $\kappa$ and type $\rho_L$ is a holomorphic function $f:\H\rightarrow \C[D]$, which  satisfies
\[
f(\gamma\tau) = (c\tau +d)^\kappa \rho_L(\gamma)f(\tau) 
\]
for all $\gamma =\kzxz{a}{b}{c}{d}\in \Gamma$ and all $\tau$ in the complex upper half plane $\H$, and is holomorphic at the cusp $\infty$. We denote the space of all such functions with $M_\kappa(\rho_L)$ and write $S_\kappa(\rho_L)$ for the subspace of cuspforms. 
Now let $\A$ be the ring of adeles,  $\calG(\Q)$ a subgroup of $\GL_2(\Q)^+$ and \[
\calG(\A) =\sideset{}{'} \prod_{p\le \infty}\calQ_p = \left\{(g_p)\in \prod_{p\le \infty}\calQ_p\; |\; g_p\in \calK_p \text{ for almost all primes } p\right\},
\]
where $\calQ_p$ and $\calK_p$ is a subgroup of $GL_2(\Q_p)$ and $\GL_2(\Z_p)$, respectively. We assign to $f$ above a function $F_f:\calG(\Q)\setminus \calG(\A)\rightarrow S_L$ by means of strong approximation for the group $\calG(\A)$. For $g = g_\Q(g_\infty \times k)$ we put
\[
F_f(g) = \omega_f^{-1}(k)j(g_\infty,i)^{-\kappa}f(g_\infty i).
\]
Here $g_\Q\in \calG(\Q)$, $g_\infty\in \calQ_\infty< \GL_2^+(\R)$ and $k\in \calK = \prod_{p<\infty}\calK_p$. In Proposition \ref{rem:correspondence_L2} and Lemma \ref{lem:cuspidal} we will show that $F$ is a cuspidal vector valued automorphic form of type $\omega_f$, which can be seen as a vector valued analogue of a scalar valued cuspidal automorphic form. Moreover, Theorem \ref{thm:correspondence_mod-adelic} states that space of these functions, satisfying further properties, is isomorphic to $S_{\kappa}(\rho_L)$. We denote this space with $A_\kappa(\omega_f)$. The inverse map can also be  given explicitly: For $F\in A_\kappa(\omega_f)$ it can be proven that $f_F$, specified by
\[
\tau \mapsto f_F(\tau)=j(g_\tau,i)^\kappa F(g_\tau\times 1_f)
\]
with $g_\tau=\kzxz{1}{x}{0}{1}\kzxz{y^{1/2}}{0}{0}{y^{-1/2}}$ and $\tau = g_\tau i = x+iy\in\H$, is indeed an element of $S_\kappa(\rho_L)$. 
Note that the definition of $F_f$ has already occurred in the work of Werner (\cite{We}). The function $f_F$ can be found in Kudla's paper \cite{Ku}. However, as far as I know, a  theory of vector valued automorphic forms has not yet appeared in the literature.

It is well known that for each prime $p$ the spherical Hecke algebra $\calH(\GL_2(\Q_p)//\GL_2(\Z_p))$ of locally constant, compactly supported and complex-valued functions, which additionally satisfy
\[
f(k_1 g k_2) = f(g)
\]
for all $k_1,k_2\in \GL_2(\Z_p)$ and all $g \in \GL_2(\Q_p)$, acts on the space of scalar valued automorphic forms. This action is compatible with the action of Hecke operators on the space of cusp forms (see for example \cite{BP}, \cite{Ge} or \cite{KL}). Werner introduced in \cite{We} an action on $A_\kappa(\omega_f)$, which is compatible with the action of the double coset $\Gamma \kzxz{p^{-1}}{0}{0}{1}\Gamma$ on $S_\kappa(\rho_L)$. In this paper, we extend Werner's result. We define for each prime $p$ a spherical Hecke algebra $\calH(\calQ_p//\calK_p,\omega_p)$ of type $\omega_p$ as follows: Let  $L_p = L\otimes \Z_p$ and $S_{L_p}$ the representation space as above, but associated to the  $p$-adic lattice $L_p$. Note that $S_L=\bigotimes_{p< \infty}S_{L_p}$ (see Chapter \ref{subsec:weil_repr_adelic} for  details). Then $\calH(\calQ_p//\calK_p,\omega_p)$ consists of all locally constant and compactly supported functions $f:\calQ_p\rightarrow S_{L_p}$, which satisfy
\[
f(k_1 g k_2) = \omega_p(k_1)\circ f(g)\circ \omega_p(k_2)
\]
for all $k_1,k_2\in \calK_p$ and all $g\in \calQ_p$. Hecke algebras of this type are well known and studied in the literature (\cite{BK}, \cite{Ho}, \cite{He}).
A ``tool'' to investigate the structure of Hecke algebras (for a pair of groups $(G,K)$ with suitable properties), vector valued or not, is the Satake map (see \cite{Sa} or \cite{Ca}), whose image is a Hecke algebra easier to understand.
Under the assumption that $L_p'/L_p$ is {\it anisotropic}, we determine a set of generators of $\calH(\calQ_p//\calK_p,\omega_p)$ and connect it to a scalar valued Hecke algebra by means of a variant of the classical Satake map (see \eqref{def:satake_map}), which we adopt from \cite{He} and which is suitable in our situation. 
Subsequently, we define an action of $\calH(\calQ_p//\calK_p,\omega_p)$ on $A_\kappa(\omega_f)$, which can be interpreted as vector-valued analogue of the versions in \cite{BP} or \cite{Mu1}. We show in Theorem \ref{thm;action_hecke_op_notcoprime} that this action is compatible with the action of Hecke operators on $S_\kappa(\rho_L)$. 


  

\section{Preliminaries on the ``finite'' Weil representation, vector valued modular forms and some notation}\label{sec:preliminaries} 
In this section we provide some notation used throughout the paper and briefly summarize some facts on lattices, discriminant forms and the ``finite'' Weil representation. We also recall the definition of vector valued modular forms for the Weil representation and some related theory relevant for the present paper. 

As usual, we let $e(z)$, $z\in \C$, be the abbreviation for $e^{2\pi i z}$. For any prime $p\in \Z$ by $\Q_p$ we mean the field of $p$-adic numbers and by $\Z_p$ its ring of $p$-adic integers; $|\cdot|_p$ is the $p$-adic absolute value and $\ord_p(\cdot)$ the $p$-adic valuation of $\Q_p$. We write $\A$ for the adele ring and $\A^\times$ for the idele group. By $\A_f$ we mean the set of finite adeles.

The following matrix groups  appear frequently in the paper.
\begin{equation}\label{def:p_adic_subgroups}
    \begin{split}
      &\calG(R) = \{M\in GL_2(R)\; |\; \det(M)\in (R^\times)^2\} \text{ for any commutative ring $R$ with $1$ },\\
      & N(\Q_p)) = \{ \kzxz{1}{r}{0}{1}\; |\; r\in \Q_p\} \text{ and } N(\Z_p) \text{ accordingly},\\
      &\calQ_p=\calG(\Q_p),\\
      & \calK_p=\calG(\Z_p),\\
      &\calM_p=\{M = \kzxz{r_1}{0}{0}{r_2}\in \GL_2(\Q_p)\; |\; \det(M)\in (\Q_p^\times)^2\} \text{ and}, \\
      &\calD_p=\calM_p\cap \calK_p.
    \end{split}
\end{equation}
To ensure a more readable exposition, we use abbreviations for certain elements of these groups:
\begin{align*}
  n\_(c) = \kzxz{1}{0}{c}{1}, \quad n(b) = \kzxz{1}{b}{0}{1}, \quad m(s) = \kzxz{s}{0}{0}{s^{-1}}, \quad m(t_1,t_2) = \kzxz{t_1}{0}{0}{t_2}\text{ and } w=\kzxz{0}{1}{-1}{0}. 
\end{align*}

The symbol $\iota_p$ occurs in some places of the paper:  For $x_p\in \calQ_p$ let  $\iota_p(x_p) = (\alpha_q)_{q\le\infty}\in \calG(\A)$ with $\alpha_q=1_q$ for $q\not=p$ and $\alpha_p=x_p$, where $1_q$ means the unit matrix in $\calQ_q$. 
Moreover, we will make frequently use of the following subsets of $\Z^2$:
  \begin{align*}
    &\Lambda = \{(k,l)\in \Z^2\; |\; k,l\ge 0  \text{ and } k+l\in 2\Z\} \text{ and }\\
    & \Lambda_+ = \{(k,l)\in \Lambda\; |\; k \le l \}. 
    \end{align*}
  Finally, as usual, we write $\H=\{\tau \in \C\; |\; \im(\tau) > 0\}$ for the complex upper half plane, let  $\leg{\cdot}{d}$ be  the Legendre symbol and use $\mathbbm{1}_A$ as a symbol for the characteristic function of the set $A$. 
  
Let $L$ be a lattice of rank $m$  equipped with a symmetric $\Z$-valued bilinear form $(\cdot,\cdot)$ such that the associated quadratic form
\[
q(x):=\frac{1}{2}(x,x),\quad x\in L,
\]
takes values in $\Z$. We assume that $m$ is even, $L$ is non-degenerate and denote its type by $(b^+,b^-)$ and its signature $b^+-b^-$ by $\sig(L)$. Note that $\sig(L)$ is also even. We stick with these assumptions on $L$ for the rest of this paper unless we state it otherwise. Further, let 
\[
L':=\{x\in V=L\otimes \Q\; |\; (x,y)\in\Z\quad \text{ for all } \; y\in L\}
\]
be the dual lattice of $L$. 
Since $L\subset L'$, the elementary divisor theorem implies that $L'/L$ is a finite group. We denote  this group by $D$. The modulo 1 reduction of both, the bilinear form $(\cdot, \cdot)$ and the associated quadratic form, defines a $\Q/\Z$-valued bilinear form $(\cdot,\cdot)$ with corresponding $\Q/\Z$-valued quadratic form on $D$. We call $D$ combined with $(\cdot,\cdot)$ a discriminant form or a quadratic module. We call it anisotropic, if $q(\mu) = 0$ holds only for $\mu=0$. 

It is well known that any discriminant form of odd order can be decomposed into a direct sum of quadratic modules of the form
\[
\calA_{p^k}^t = \left(\Z/p^k\Z, \;\frac{tx^2}{p^k}\right).
\]
 An anisotropic quadratic module of odd order consists of  $p$-groups $D_p$, which can  either be written as $\calA_p^t$ or as a direct sum $\calA_p^t\oplus \calA_p^1$. For further details we refer to  \cite{BEF}. 

The ``finite'' Weil representation $\rho_L$ is a representation of $\Gamma = \SL_2(\Z)$ on the group ring $\C[D]$. We denote the standard basis of $\C[D]$  by $\{\frake_{\lambda}\}_{\lambda\in D}$.
On the standard generators  
\begin{equation}\label{eq:symplectic_group_gen}
S=\zxz{0}{-1}{1}{0},\quad T= \zxz{1}{1}{0}{1}, 
\end{equation}
of $\Gamma$ $\rho_L$ is given by
\begin{equation}\label{eq:weil_finite_repr}
\begin{split}
&\rho_L(T)\frake_\lambda := e(q(\lambda))\frake_\lambda,\\
& \rho_L(S)\frake_\lambda:=\frac{e(-\sig(L)/8)}{|D|^{1/2}}\sum_{\mu\in D}e(-(\mu,\lambda))\frake_\mu.
\end{split}
\end{equation}
We denote by $N$ the level of the lattice $L$. It is the smallest positive integer such that $Nq(\lambda)\in \Z$ for all $\lambda\in L'$.  One can prove that the Weil representation $\rho_{L}$ is trivial on $\Gamma(N)$, the principal congruence subgroup of level $N$. Therefore, $\rho_{L}$ factors over the finite group
  \[
  \Gamma/\Gamma(N)\cong \SL_2(\Z/N\Z).
  \]
  For the rest of this paper we suppose that $N$ is {\it odd}.

  We now define vector valued modular forms of type $\rho_L$. With respect
to the standard basis of $\C[D]$ a function $f: \H\rightarrow \C[D]$ can be written in the form
\[
f(\tau) = \sum_{\lambda\in D}f_\lambda(\tau)\frake_\lambda.
\]
The following operator generalises the usual Petersson slash operator to the space of all those functions. For $\kappa\in \Z$ we define
\begin{equation}\label{eq:slash_operator}
f\mid_{\kappa,L}\gamma =
j(\gamma,\tau)^{-\kappa}\rho_L(\gamma)^{-1}f(\gamma\tau),
\end{equation}
where
\[
j(\gamma,\tau) = \det(\gamma)^{-1/2}(c\tau+d)
\]
is the usual automorphy factor if $\gamma=\kzxz{a}{b}{c}{d}\in \GL_2^+(\R)$.

A holomorphic function $f: \H\rightarrow \C[D]$ is called a modular
form of weight $ \kappa $ and type $ \rho_L $ for $ \Gamma $ if
$ f\mid_{\kappa,L}\gamma= f $ for all $\gamma\in \Gamma$,
and if $f$ is holomorphic at the cusp $\infty$. Here the last condition means that all Fourier coefficients $c(\lambda, n)$ of $f$ with $n <  0$ vanish. If in addition $c(\lambda,n) = 0$ for all $n = 0$, we call the corresponding modular form a cusp form. We denote by $M_\kappa(\rho_L)$ the
space of all such modular forms, by $S_\kappa(\rho_L)$ the subspace of cusp forms. For more details see e.g. \cite{Br1} or \cite{BS}.

The Petersson scalar product on $S_\kappa(\rho_L)$ is given by
\begin{equation}\label{eq:petersson_scalar}
(f,g) =
\int_{\Gamma\backslash \H}\langle f(\tau), g(\tau)\rangle \im \tau^\kappa d\mu(\tau)
\end{equation}
where
\[
d\mu(\tau) = \frac{dx\;dy}{y^2}
\]
denotes the hyperbolic volume element and 
  \begin{equation}\label{eq:scalar_product_group_ring}
    \left\langle \sum_{\lambda\in D}a_\lambda\frake_\lambda,\sum_{\lambda\in D}b_\lambda\frake_\lambda \right\rangle =\sum_{\lambda\in D}a_\lambda \overline{b_\lambda}.
  \end{equation}
  the standard scalar product on $\C[D]$.

  Let $d$ an integer. By $g_d(D)$ we denote the Gauss sum
  \begin{equation}\label{eq:gauss_sum_d}
    g_d(D)=\sum_{\lambda\in D}e(dq(\lambda))
  \end{equation}
  and $g(D) = g_1(D)$. 
Since fractions of these Gauss sums are of some relevance in this paper, we gather some facts on the sums $g_d(D)$ and quotients thereof.
\begin{lemma}
  \begin{enumerate}
    \item[i)]
  The Gauss sums $g_d(D)$ satisfy the properties
  \begin{align*}
   & g_{-d}(D) = \overline{g_d(D)} \\
    & g_d(D\oplus D') = g_d(D)g_d(D') \\
    & g_{dr}(D) = g_d(D),
  \end{align*}
  where $r\in \Z$ is square in $(\Z/N\Z)^\times$. 
\item[ii)]
  If $d$ is coprime to $|D|$, we  have
\begin{equation}\label{eq:frac_gauss_sum}
  \frac{g(D)}{g_d(D)} =\leg{d}{|D|}e\left(\frac{(d-1)\odd(D)}{8}\right)
\end{equation}
If $|D|$ is odd, the right-hand side of \eqref{eq:frac_gauss_sum} simplifies to
the quadratic character
\begin{equation}\label{eq:frac_gauss_char}
  \chi_{D}(d) = \leg{d}{|D|}.
\end{equation}
\end{enumerate}
  \end{lemma}
  A proof for \eqref{eq:frac_gauss_sum}  equation can be found in \cite{We1}, Theorem 5.17. For \eqref{eq:frac_gauss_char} see e. g. \cite{We1}, Lemma 5.8 or \cite{CS}, Chap. 15, $\S$ 7.

  The theory of Hecke operators for modular forms of this type was developed in \cite{BS}. In Theorem \ref{thm;action_hecke_op_notcoprime} of this paper Hecke operators $T(M,\det(M)^{1/2})$ for matrices of the form $M=\kzxz{p^{-k}}{0}{0}{p^{-l}}$ with a prime number $p$, $(k,l)\in \Lambda_+$, play a main role.
  If $p$ is comprime to $N$, it can be defined by the action of the corresponding double coset in the classical way, see  Def. 4.1 of \cite{BS} for details. The main difficulty lies in the extension of $\rho_L$ to matrices of $\GL_2(\Q)$, which only works if the componentwise reduction modulo $N$ of the matrix in question yields an element in $\GL_2(\Z/N\Z)$.
  For a prime $p$ dividing $N$, the reduction modulo $N$ of $\kzxz{p^{-k}}{0}{0}{p^{-l}}$ does not lie in $\GL_2(\Z/N\Z)$. However, as explained in \cite{BS}, Chapter 5 ((5.1), (5.2) more specifially)  taking  \cite{St2}, Section 4 into account, it is possible to define a Hecke operator by
  setting
  \begin{equation}
   f\mid_{\kappa,L} T(m(p^{-k},p^{-l}), p^{-(k+l)/2}) = \det(m(p^{-k},p^{-l}))^{\kappa/2-1}\sum_{\gamma\in \Gamma\bs\Gamma m(p^{-k},p^{-l})\Gamma}f\mid_{\kappa,L}\gamma.
    \end{equation}
  Here, for any $\gamma = \delta m(p^{-k},p^{-l}) \delta'\in \Gamma m(p^{-k},p^{-l})\Gamma$ we put
  \begin{equation}\label{eq:weil_double_cosets}
  \rho_L^{-1}(\gamma) = \rho_L^{-1}(\delta')\rho_L^{-1}(m(p^{-k},p^{-l}))\rho_L^{-1}(\delta)
  \end{equation}
  and
  \begin{equation}\label{eq:weil_diagonal_matrix}
    \begin{split}
      \rho_L^{-1}(m(p^{-k},p^{-l}))\frake_\lambda &= \rho_L^{-1}(m(p^{-l},p^{-l}))\rho_L^{-1}(m(p^{l-k},1))\frake_\lambda \\
      &= \frac{g(D_p^\perp)}{g_{p^l}(D_p^\perp)}\frake_{p^{(l-k)/2}\lambda}, 
    \end{split}
  \end{equation}
  where $D_p^\perp$ means the orthogonal complement of $D_p$ in $D$.
  
  \section{The Weil representation on $\GL_2(\A)$}\label{subsec:weil_repr_adelic}
Let $(L, (\cdot,\cdot))$ be n even, non-degenerated lattice of signature $(b^+,b^-)$ with even rank $b^++b^-$ with dual lattice $L'$ and the quadratic module $D=L'/L$. 
We further  define $V=L\otimes \Q$ and let $H=O(V)$ be the orthogonal group over $\Q$ attached to $(V,(\cdot,\cdot))$. 
In this section we collect some well known facts on the Weil representation of $\SL_2(\A)\times H(\A)$, which is suited for our purposes in this paper. 
 Here we consider the Schr\"odinger model of the Weil representation $\omega = \prod_{p\le \infty}\omega_p$  on the space $S(V(\A))$ of Schwartz-Bruhat functions associated to the character
\begin{equation}\label{eq:standard_character}
  \psi = \prod_{p\le \infty}\psi_p: \A/\Q\rightarrow \C^{\times},\; x=(x_p)\mapsto \psi(x)=e^{2\pi i (-x_\infty + \sum_{p<\infty}x_p')},
\end{equation}
where $x_p'\in \Q/\Z$ is the principal part of $x_p$ and $V(\A)=V\otimes \A$.  Note that this character is the complex conjugate   of the standard additive character (see e. g. \cite{St}, \cite{BY} and \cite{KL}, Chapter 8).

A second goal of the present section is the extension of $\omega$ to a subgroup of $\GL_2(\A)$ in the spirit of the extension of the ``finite'' Weil representation $\rho_L$ in \cite{BS}.  

For $\mu\in D$  we define  $\varphi_\mu \in S(V(\A_f))$ with
\begin{align}\label{eq:familiy}
  \varphi_\mu = \mathbbm{1}_{\mu + \hat{L}} = \prod_{p<\infty} \varphi^{(\mu)}_p = \prod_{p<\infty}\mathbbm{1}_{\mu+L_p}. 
\end{align}
Here $L_p=L\otimes \Z_p$, which is the $p$-part of $\hat{L}=L\otimes \hat{\Z}$ with $\hat{\Z}=\prod_{p<\infty}\Z_p$, and $\mathbbm{1}_{\mu+L_p}$ is the characteristic function of $\mu+L_p$. Note that there is a close relation between the finite groups $L_p'/L_p$ and the $p$-groups $D_p$. In fact, these groups are isomorphic.  This  isomorphism additionally respects the quadratic forms, which endow both groups, see e. g. \cite{Ze}, Section 3, \cite{St}, Remark 3.2 or \cite{We1}, Theorem 4.30.   In the following, we identify these groups and use them interchangeably. 
As in \cite{BY} we consider the $|D|$- dimensional subspace
\begin{equation}\label{eq:space_char_func}
  S_L=\bigoplus_{\mu\in D}\C\varphi_\mu \subset S(V(\A_f)). 
\end{equation}
It is known that the space \eqref{eq:space_char_func} is stable under the action of the group $\SL(2,\hat{\Z})$ via the Weil representation $\omega_f$ (see e. g.  \cite{BY}, chapter 2 or \cite{Ku}). Also, the $L^2$ scalar  product $\langle \cdot,\cdot\rangle$  on $S_L\subset S(V(\A_f))$ simplifies to
\begin{equation}\label{def:L_2_S_L}
\langle \sum_{\mu\in D}F_\mu\varphi_\mu, \sum_{\mu\in D}F_\mu\varphi_\mu\rangle = \sum_{\mu\in D}|F_\mu|^2.
\end{equation}
Note that $D$ can be decomposed into $p$-groups $D = \bigoplus_{p\mid |D|}D_p\cong\bigoplus_{p\mid |D|}L_p'/L_p $. For almost all primes $p$ - those coprime to $|D|$ - $L_p$ is unimodular and thus $L_p'/L_p = 0+L_p$. Therefore, we can write $D\cong \bigoplus_{p<\infty} L_p'/L_p$. On the level of the space $S_L$, this decomposition translates to the isomorphism
\begin{equation}\label{eq:local_decomp_S_L}
S_L\cong \bigotimes_{p< \infty} S_{L_p},\quad  \varphi_{\mu} \mapsto \bigotimes_{p<\infty}\varphi_p^{(\mu_p)},
\end{equation}
where $\mu= \sum_{p\mid |D|}\mu_p$ and $\varphi_p^{(\mu_p)} = \varphi_p^{(0)}$ for all primes $p$ coprime to $|D|$. The local Weil representation $\omega_p$ acts on the $p$-part $S_{L_p}$ of $S_L$, where
\begin{equation}\label{eq:local_S_L}
S_{L_p} =
\begin{cases}
  \bigoplus_{\mu\in L_p'/L_p}\C\varphi_p^{(\mu)}, & p\mid |D|,\\
  \C\varphi_p^{(0)}, & p\nmid |D|.
\end{cases}
  \end{equation}
We then have 
 \[
\omega_f(\gamma_f)\varphi_\mu = \bigotimes_{p<\infty}\omega_p(\gamma_p)\varphi_p^{(\mu_p)}.
\]

According to  \cite{St}, Lemma 3.4 and \cite{BY}, Proposition 2.5, the Weil representation $\omega_p$ can be described explicitly on the generators of $\SL_2(\Z_p)$ by
\begin{equation}\label{eq:weil_rep_explicit}
  \begin{split}
    &    \omega_p(n(b))\varphi_p^{(\mu)} = \psi_p(bq(\mu))\varphi_p^{(\mu)} \\
    & \omega_p(w)\varphi_p^{(\mu)} = \frac{\gamma_p(L_p'/L_p)}{|L_p'/L_p|^{1/2}}\sum_{\nu_p\in L_p'/L_p}\psi_p((\mu_p,\nu_p))\varphi_p^{(\nu_p)}\\
    & \omega_p(m(a))\varphi_p^{(\mu_p)} = \chi_{V,p}(a)\varphi_p^{(a^{-1}\mu_p)},
    \end{split}
\end{equation}
where $\gamma(L_p'/L_p)$ is the local Weil index and  $\chi_{V,p}(a) = (a, (-1)^{m/2}|D_p|)_p$ is the local Hilbert symbol. Evaluating the local Hilbert symbol gives
\begin{equation}\label{eq:local_hilbert_symbol}
  \chi_{V,p}(a) = \leg{a}{|L_p'/L_p|} = \chi_{D_p}(a),
\end{equation}
see e. g. \cite{Se}, Chapter III. 
These formulas imply that (see the proof Lemma 3.4 in \cite{St})
\begin{enumerate}
\item[i)]
  the local Weil representations $\omega_p$ is trivial if $p$ is coprime to $|D|$,
\item[ii)]
  if we identify $\C[D]$ with $S_L$ via $\frake_\mu\mapsto \varphi_\mu$, then $\omega_f$ coincides with the finite Weil representation $\rho_{L}$ in the following way 
  \begin{equation}\label{eq:adelic_weil_rerpr}
  \rho_L (\gamma) = \omega_f(\gamma_f),
  \end{equation}
  where $\gamma\in \SL_2(\Z)$ and $\gamma_f\in \SL_2(\widehat{Z})$ is the projection of $\gamma$ into $\SL_2(\widehat{\Z})$. Note that by our choice of the character $\psi$, the relation \eqref{eq:adelic_weil_rerpr} differs from the one in \cite{BY}, (2.7), by conjugation.
  \end{enumerate}
The following lemma provides the action of $\omega_p$ for the lower triangular matrix $n\_(c)\in \SL_2(\Z_p)$:

\begin{lemma}\label{lem:local_weil_lower_triangular}
  \begin{enumerate}
    \item[i)]
  Let $c\in \Z_p^\times$. Then
  \begin{equation}\label{eq:local_weil_lower_triangular}
    \omega_p(n\_(c))\varphi_p^{(\mu_p)} = \frac{\gamma_p(L_p'/L_p)}{|L_p'/L_p|^{1/2}}\chi_{V,p}(-c)\psi_p(c^{-1}q(\mu_p))\sum_{\nu_p\in L_p'/L_p}\psi_p(c^{-1}q(\nu_p))\psi_p(-c(\mu_p,\nu_p))\varphi_p^{(\nu_p)}.
    \end{equation}
\item[ii)]
  Let $c\in p\Z_p$. If $L_p'/L_p$ is anisotropic, then
  \begin{equation}\label{eq:local_weil_lower_triangular_1}
    \omega_p(n\_(c))\varphi_p^{(\mu_p)} = \varphi_p^{(\mu_p)}. 
  \end{equation}
  \end{enumerate}
  \end{lemma}
  \begin{proof}

    $i)$:  By the Bruhat decomposition (see e. g. \cite{KL}, p. 69),
  \[
  n\_(c) = n(c^{-1})wn(c)m(-c).
  \]
  From this we infer that by means of \eqref{eq:weil_rep_explicit}
  \[
  \omega_p(n\_(c))\varphi_p^{(\mu_p)} = \frac{\gamma_p(L_p'/L_p)}{|L_p'/L_p|^{1/2}}\chi_{V,p}(-c)\psi_p(c^{-1}q(\mu_p))\sum_{\nu_p\in L_p'/L_p}\psi_p(c^{-1}q(\nu_p))\psi_p(-c(\mu_p,\nu_p))\varphi_p^{(\nu_p)}.
  \]

  $ii)$:   Since $c\in p\Z_p$ and the level of $D_p$ is $p$, $\omega_p$ acts trivially on $S_{L_p}$:
  \begin{align*}
    \omega_p(n\_(ca^{-1}))\varphi_p^{(\mu_p)} &= \omega_p(w)\omega_p(n(ca^{-1}))\omega_p(w^{-1})\varphi_p^{(\mu_p)} \\
    &=\omega_p(w)\omega_p(w^{-1})\varphi_p^{(\mu_p)} \\
    &=\varphi_p^{(\mu_p)}.
  \end{align*}
  For the second last equation we used that $\omega_p(n(ca^{-1}))$ acts trivially on $S_{L_p}$.
\end{proof}

Via the extension of $\rho_{L}$ to a subgroup of $\calG(\Q)$ (see \cite{BS} and \cite{St2}, Section 4), it is possible to extend $\omega_f$ (cf.  \cite{We}, Def. 46) to the some group, into which $\SL_2(\A)$ can be embedded. To explain this extension process, we need some notation. Let $N= \prod_{i=1}^rp^{e_i}$ be the level of $D$,  
\[
\pi:\widehat{\Z}\rightarrow \prod_{p|N}\Z_p
\]
the projection onto the places $p\mid N$ and
\[
\pi_N: \prod_{p|N}\Z_p\rightarrow \Z/N\Z
\]
the composition of the canonical projection  of $\Z_{p_i}$ to $\Z/p^{e_i}\Z$ and the application of the Chinese remainder theorem. We further denote with 
\begin{equation}
\calK(p):=\left\{(M,r)\in \calK_p\times (\Z/N\Z)^*\;|\; \det((\pi_N\circ\pi)(M))\equiv r^2\bmod{N}\right\}
\end{equation}
and $\calK' = \prod_{p<\infty}\calK(p)$. The following group can be found in \cite{BS}, (3.2):
\[
Q(N) = \{(M, r)\in \GL_2(\Z/N\Z)\times (\Z/N\Z)^\times\;|\; \det(M)\equiv r^2\bmod{N}\}.
  \]
  Applied to each component of the involved matrix, we obtain a sequence of homomorphisms
 \begin{align*}
  \calK\xrightarrow{\Pi}\prod_{p|N}\calK_p\xrightarrow{\Pi_N} Q(N),
 \end{align*}
 where $\Pi$ and $\Pi_N$ denote the matrix valued counterparts of $\pi$ and $\pi_N$, respectively.
Note that we can embed $\SL_2(\Z_p)$ and $\{\kzxz{r}{0}{0}{r}\; |\; r\in (\Z_p)^\times\}$ into $\calK(p)$ homomorphically by the mappings $k_p \mapsto (k_p,1)$ and $\kzxz{r}{0}{0}{r}\mapsto (\kzxz{r}{0}{0}{r},r)$. 
 To keep the notation as readable as possible, we omit the second component of elements of $\calK', \calK(p)$ or $\calQ(N)$ (if possible).
 
 \begin{definition}\label{def:ext_adelic_weil_repr_coprime}  
   Let $k\in \calK'$. Then we define
   \begin{equation}\label{eq:weil_rep_global}
  \begin{split}
  \omega_f(k) &= \bigotimes_{p<\infty} \omega_p(k_p)\\
  &=\bigotimes_{p\nmid N}\omega_p(k_p)\bigotimes_{p|N}\omega_p(k_p)
  \end{split}
  \end{equation}
with $\omega_p(k_p) = \id_{S_{L_p}}$ for all primes $p\nmid N$ and  
\begin{equation}\label{eq:ext_adelic_weil_repr_coprime}
  \begin{split}
 \bigotimes_{p\mid N}\omega_p(k_p)= \rho_{L}(\Pi_N((k_p)_{p\mid N})).
    \end{split}
\end{equation}
Here by $(k_p)_{p\mid N}$ we mean the tuple of all components $k_p\in \calK(p)$ of $k$ belonging to the primes $p$ dividing $N$. 
Combining \eqref{eq:weil_rep_global} and \eqref{eq:ext_adelic_weil_repr_coprime} we set
\begin{equation}\label{def:global_weil_rep}
\omega_f(k) = \rho_{L}((\Pi_N\circ \Pi)(k)).
\end{equation}
\end{definition}

 Note that   Definition \ref{def:ext_adelic_weil_repr_coprime} is compatible with \eqref{eq:adelic_weil_rerpr}. For,  if we take $k\in \SL_2(\widehat{\Z})$ as the projection of some $\gamma\in \SL_2(\Z)$, we find $(\Pi_N\circ \Pi)(k) = \Pi_N(\gamma)\in \SL_2(\Z/N\Z)$ and
\begin{equation}\label{eq:weil_on_SL_2}
 \omega_f(k) = \rho_{L}(\gamma)
\end{equation}
since $\rho_L$ factors through $\SL_2(\Z/N\Z)$.
As a special case, Definition \ref{def:ext_adelic_weil_repr_coprime} comprises the extension of the local Weil representation $\omega_p$  from $\SL_2(\Z_p)$ to $\calK(p)$:
 \begin{definition}\label{def:local_weil_repr_extended}
   Assume that the level $N$ is equal to a prime $p$ and let $k_p\in\calK(p)$ be embedded into $\calK'$ by $\iota_p(k_p)$. Then  we define
   \begin{equation}\label{eq:local_weil_repr_extended}
     \omega_p(k_p) = \omega_f(\iota_p(k_p)) = \rho_L(\Pi_p(k_p)).
   \end{equation}
\end{definition}
 The following formulas for $\omega_p$ will be used frequently. 
 \begin{remark}\label{def:ext_local_weil_repr}
   Let $k_p\in \calK_p$ with $\det(k_p)= t^2\in (\Z_p)^\times$. Then due to Definition \ref{def:local_weil_repr_extended} and \cite{BS}, (3.5), we find
 \begin{equation}\label{eq:weil_repr_scalar_matrix}
  \begin{split}
    \omega_p(\kzxz{t}{0}{0}{t})\varphi_p^{(\lambda_p)} &= \rho_L(\Pi_p\kzxz{t}{0}{0}{t})\varphi_p^{(\lambda_p)}\\
    &= \frac{g(D_p)}{g_t(D_p)}\varphi_p^{(\lambda_p)} \\
 &= \chi_{D_p}(t)\varphi_p^{(\lambda_p)}
  \end{split}
 \end{equation}
 and
 \begin{equation}\label{eq:local_weil_repr}
\omega_p((k_p,t)) = \omega_p(\kzxz{t}{0}{0}{t}) \omega_p(\kzxz{t^{-1}}{0}{0}{t^{-1}}k_p),
 \end{equation}
 where $\kzxz{t^{-1}}{0}{0}{t^{-1}}k_p$ is an element of $\SL_2(\Z_p)$.
For the computations \eqref{eq:weil_repr_scalar_matrix} we identified $t$ with $\pi_p(t)$ and $\varphi_p^{(\lambda_p)}$ with $\frake_\lambda$ (regarding the right-hand side of the first equation). 
 \end{remark}

Finally, we need  to define  the local Weil representation $\omega_p$ on  double cosets of the form $\calK(p) m(p^{-k},p^{-l})\calK(p)$ where $p$ divides the level of $D$. These matrices play an important role in  Theorem \ref{thm;action_hecke_op_notcoprime}. Here  $m(p^{-k},p^{-l})\in \calM_p$ with $(k,l)\in \Lambda_+$. Note that we cannot proceed as in the definitions since $\Pi_p$ is not well defined in this case. But we can mirror the corresponding process in \cite{BS}, Chapter 5. For the definition of ``finite'' Weil representation $\rho_L$ on $m(p^{-k},p^{-l})$ we refer to Section \ref{sec:preliminaries} or in more detail to \cite{St2}, Section 4 ((4.7)-(4.10)). 

\begin{definition}\label{def:local_weil_double_coset}
  Let  $m(p^{-k},p^{-l})\in \calM_p$ with $(k,l)\in \Lambda_+$.
  \begin{enumerate}
  \item[i)]
    Then we set
    \begin{equation}\label{eq:local_weil_double_coset_1}
      \begin{split}
      \omega_p^{-1}(m(p^{-k},p^{-l}))\varphi_p^{(\lambda_p)} &= \rho_L^{-1}(m(p^{-k},p^{-l}))\varphi_p{(\lambda_p)}\\
      &= \rho_L^{-1}(m(p^l,p^l))\rho_L^{-1}(m(p^{l-k},1))\varphi_p^{(\lambda_p)} \\
      &= \varphi_p^{(p^{(l-k)/2}\lambda_p)},
        \end{split}
    \end{equation}
    where we used for the last equation that according to \cite{St2}, (4.7), (4.8) $\rho_L(m(p^l,p^l))$ acts by multiplication with $\frac{g(D_p^\perp)}{g_{p^l}(D_p^\perp)}$. But,  this quotient is by definition in this special case trivial. 
  \item[ii)]
    For $\delta = \gamma m(p^{-k},p^{-l})\gamma'\in\calK(p)m(p^{-k},p^{-l})\calK(p)$ we define
    \begin{equation}\label{eq:local_weil_double_coset_2}
      \begin{split}
        \omega_p^{-1}(\delta)\varphi_p^{(\lambda_p)} &= \omega_p^{-1}(\gamma')\omega_p^{-1}(m(p^{-k},p^{-l}))\omega_p^{-1}(\gamma)\varphi_p^{(\lambda_p)} \\
     &  = \omega _p^{-1}(\gamma')\rho_L^{-1}(m(p^{-k},p^{-l}))\omega_p^{-1}(\gamma)\varphi_p^{(\lambda_p)}.
      \end{split}
      \end{equation}
    \end{enumerate}
  \end{definition}

\begin{remark}
The definition of $\omega_p$ on $\calK(p)m(p^{-k},p^{-l})\calK(p)$ is independent of the choice of the representatives. This follows from \cite{BS}, Prop. 5.1 for double cosets of the form $\SL_2(\Z_p)m(p^{-k},p^{-l})\SL_2(\Z_p)$ and \eqref{eq:weil_on_SL_2}. Since the action of $\omega_p$ on $\calK(p)$ differs from that on $\SL_2(\Z_p)$ only by the action of scalar matrices, i. e. by multiplication with character (see Definition \ref{def:ext_local_weil_repr}), we obtain the same result for double cosets of the form $\calK(p)m(p^{-k},p^{-l})\calK(p)$. 
\end{remark}


\section{The Hecke algebra $\calH(\calQ_p//\calK_p, \omega_p)$}\label{subsec:hecke_slgebras}
In this section we will describe the structure of the local vector valued spherical Hecke algebra $\calH(\calQ_p//\calK_p,\omega_p)$ associated to the pair of groups $(\calQ_p, \calK_p)$ and the local Weil representation $\omega_p$. For each prime $p$ we will introduce a Satake map, which allows us to understand the structure of this Hecke algebra. For primes $p\nmid |D|$ these Hecke algebras are isomorphic to the scalar valued algebras defined by the same groups. These are well understood thanks to the classical Satake map. If  $p$ divides $|D|$, the algebras $\calH(\calQ_p//\calK_p,\omega_p)$ are considerably more complicated because $\omega_p$ is non-trivial. However, under certain restrictions for $D_p$, we will define a modified Satake map, which maps $\calH(\calQ_p//\calK_p,\omega_p)$ to a simpler algebra, whose structure can be easier determined.


The following general facts about spherical Hecke algebras can be found in many places, among them \cite{BK}, chapter 4, \cite{Ho} and \cite{Mu}. 


  \begin{definition}\label{def:spherical_hecke_algebra}
Let $G$ be a locally compact group $G$, $K$ an open compact subgroup and $\rho: K\rightarrow \GL(V)$ a representation of $K$. 
    The Hecke algebra $\calH(G//K,\rho)$ of $\rho$-spherical functions  is the set of functions $f:G\rightarrow \text{End}(V)$ which 
    \begin{enumerate}
    \item[i)]
      are compactly supported modulo $K$, i. e.  each $f$ vanishes outside finitely many double cosets $K g K$ and satisfy
    \item[ii)]
\[
f(k_1 g k_2) = \rho(k_1)\circ f(g)\circ\rho(k_2) \text{ for all } k_1, k_2 \in K  \text{ and all } g\in G.
\]
    \end{enumerate}
    Since each element $f$ of $\calH(G//K,\rho)$ is of the form
    \[
    f(g)= \sum_{i=1}^n a_if_i(g), 
    \]
    where $a_i\in \C$ and  $f_i$ is an element of the subspace of functions of $\calH(G//K,\rho)$, which vanish outside $Kg_iK$, the whole algebra is generated by the functions $f_i$. 
    Similarly, we denote by $\calH(G//K)$ the set of functions $f:G\rightarrow \C$, which are compactly supported modulo $K$ and $K$-bi-invariant, i. e. $f(k_1 g k_2) = f(g)$ for all $k_1, k_2\in K$ and all $g\in G$. We call $\calH(G//K)$ also a spherical Hecke algebra. 
\end{definition}

  It is well known that $\calH(G//K, \rho)$ is an associative $\C$-algebra with respect to convolution
  \begin{equation}\label{eq:convolution}
  (f_1\ast f_2)(g) = \int_{G}f_1(h)\circ f_2(h^{-1}g)dh = \sum_{h\in G/K}f_1(h)\circ f_2(h^{-1}g),
  \end{equation}
  where $dh$ is the standard Haar measure on $G$ normalized by $\int_{K}dh = 1$

  In order determine the structure of $\calH(G//K, \rho)$, in view of the remarks before, it is useful to study the space of functions in this Hecke algebra, which vanish outside a single double coset $K g K$. It can be described in terms of intertwining operators of $\rho$ associated with $g$. To state the corresponding result, we fix some notation. For $g\in G$ we mean by $K^g$ the group $gKg^{-1}$ and write $\rho_g$ for the representation $h\mapsto \;\rho_g(h)= \rho(g^{-1}hg)$ of $K^g$. As usual,
  \begin{align*}
  \Hom_{K\cap K^g}(\rho,\rho_g) =    \{F:V\rightarrow V\; |\; F \text{ is linear and } F\circ \rho_g(h) = \rho(h)\circ F \text{ for all } h\in K\cap K^g\}.
  \end{align*}
  Then we have

  \begin{lemma}\label{lem:double_cose_hecke_algebra}
   Let $g\in G$. The subspace of $\calH(G//K,\rho)$ consisting of functions supported on $K g K$, is isomorphic to $\Hom_{K \cap K^g}(\rho, \;\rho_g)$.
    \end{lemma}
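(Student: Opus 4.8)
The plan is to exhibit an explicit linear isomorphism between the space $\calH(KgK)$ of functions in $\calH(G//K,\rho)$ supported on the single double coset $KgK$ and the intertwining space $\Hom_{K\cap K^g}(\rho,\rho_g)$, and then to verify that it is well-defined, linear, injective, and surjective. First I would observe that any $f\in\calH(KgK)$ is completely determined by its value $F:=f(g)\in\End(V)$, since the bi-equivariance property (ii) forces $f(k_1 g k_2)=\rho(k_1)\circ f(g)\circ\rho(k_2)$ for all $k_1,k_2\in K$, and every element of $KgK$ has such a form. Thus the map $\Phi\colon f\mapsto f(g)$ is the natural candidate, and the whole content of the lemma is to identify its image.

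The key step is to pin down exactly which operators $F\in\End(V)$ arise as $f(g)$ for a genuinely well-defined $f$. The obstruction to well-definedness is that the decomposition $k_1 g k_2$ of an element of $KgK$ is not unique: if $k_1 g k_2 = k_1' g k_2'$, then setting $h:=k_1^{-1}k_1'$ one computes $h\in K$ and $g^{-1}hg = k_2 k_2'^{-1}\in K$, so $h\in K\cap K^g$ (recalling $K^g = gKg^{-1}$). Consistency of the formula $f(k_1 g k_2)=\rho(k_1)\,F\,\rho(k_2)$ then demands precisely $\rho(h)\,F = F\,\rho_g(h)$ for all $h\in K\cap K^g$, where $\rho_g(h)=\rho(g^{-1}hg)$; that is, $F\circ\rho_g(h)=\rho(h)\circ F$, which is exactly the intertwining condition defining $\Hom_{K\cap K^g}(\rho,\rho_g)$. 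So I would show in both directions: (a) if $f$ is a well-defined equivariant function supported on $KgK$, then $F=f(g)$ satisfies the intertwining relation; and (b) conversely, given $F$ in the intertwining space, the formula $f(k_1 g k_2):=\rho(k_1)\circ F\circ\rho(k_2)$ is independent of the chosen decomposition and hence defines a genuine element of $\calH(KgK)$.

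For direction (a) I would take $h\in K\cap K^g$, write $h=g h' g^{-1}$ with $h'=g^{-1}hg\in K$, and evaluate $f(hg)=f(gh')$ two ways using property (ii): the left side gives $\rho(h)\circ f(g)$, the right side gives $f(g)\circ\rho(h')=f(g)\circ\rho_g(h)$, and equating yields the intertwining identity. For direction (b) the independence computation is the reverse of the consistency analysis above: given two decompositions one produces $h\in K\cap K^g$ and checks, using $F\circ\rho_g(h)=\rho(h)\circ F$, that the two candidate values agree. Linearity of $\Phi$ is immediate, injectivity follows because $f$ is determined by $F=f(g)$ on $KgK$ and vanishes off $KgK$, and surjectivity is exactly the content of (b). Finally I would note that finite support modulo $K$ is automatic since $f$ is supported on the single double coset $KgK$, so condition (i) of Definition \ref{def:spherical_hecke_algebra} holds trivially.

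The main obstacle is the well-definedness verification in direction (b): one must be careful that the group $K\cap K^g$ governing the ambiguity is correctly identified and that the intertwining condition is used with the right variance, since $\rho_g$ is defined via conjugation by $g^{-1}$ and a sign or side error in tracking $k_1^{-1}k_1'$ versus $k_2 k_2'^{-1}$ would break the argument. Everything else is formal, but this bookkeeping is where the isomorphism genuinely lives.
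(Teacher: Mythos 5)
Your proposal is correct and follows exactly the route the paper sketches: the isomorphism is $f\mapsto f(g)$, with inverse sending an intertwiner $F$ to the function defined by $f(k_1gk_2)=\rho(k_1)\circ F\circ\rho(k_2)$. You merely supply the well-definedness and equivariance verifications that the paper leaves to the reader, and your bookkeeping of $K\cap K^g$ and the variance of $\rho_g$ is accurate.
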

  \begin{proof}
    The assertion is well known (see e. g. \cite{BK}, Chapter 4). Nevertheless, for later purposes,  we indicate a proof by giving the maps of the claimed isomorphism (without further explanation).

    If $f \in \calH(G//K,\rho)$ with $f(g)\not=0$ supported on $KgK$, then it easily checked that $f(g)\in \Hom_{K \cap K^g}(\rho, \;\rho_g)$ (non-zero). On the other hand, if $0\not=F \in \Hom_{K \cap K^g}(\rho, \;\rho_g)$, we put $f(g) = F$ and $f(k_1 g k_2) = \rho(k_1)\circ f\circ \rho(k_2)$ and obtain thereby an element of the above stated subspace of $\calH(G//K,\rho)$. 
    \end{proof}

The following Lemma ensures that the groups $\calQ_p$ and $\calK_p$ meet the conditions of Definition \ref{def:spherical_hecke_algebra}. It might be known. Since  I have not found it in the literature, I state it here and add a short proof.
\begin{lemma}\label{lem:properties_groups}
  \begin{enumerate}
  \item[i)]
    The group $\calK_p$ is an open compact subgroup of $\calQ_p$.
  \item[ii)]
    The group $\calQ_p$ is a locally compact subgroup of $\GL_2(\Q_p)$.
    \end{enumerate}
\end{lemma}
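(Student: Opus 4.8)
The plan is to realize both groups as preimages under the determinant homomorphism of the corresponding groups of squares, and then to transfer openness, closedness and compactness from the base. Write $\det$ for the homomorphism $\GL_2(\Q_p)\to\Q_p^\times$, which restricts to $\GL_2(\Z_p)\to\Z_p^\times$. Since $(\Q_p^\times)^2$ and $(\Z_p^\times)^2$ are subgroups of $\Q_p^\times$ and $\Z_p^\times$, we have $\calQ_p=\det^{-1}((\Q_p^\times)^2)$ and $\calK_p=\det^{-1}((\Z_p^\times)^2)$; in particular both are subgroups of $\GL_2(\Q_p)$ and $\GL_2(\Z_p)$ respectively.

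The key elementary observation, which I would record first, is that $\calK_p=\calQ_p\cap\GL_2(\Z_p)$. The inclusion $\subseteq$ is clear; for $\supseteq$ take $M\in\GL_2(\Z_p)$ with $\det(M)=x^2$ for some $x\in\Q_p^\times$. Then $\det(M)\in\Z_p^\times$, so $\ord_p(x)=\tfrac12\ord_p(\det M)=0$, whence $x\in\Z_p^\times$ and $\det(M)\in(\Z_p^\times)^2$. Thus a unit which is a square in $\Q_p^\times$ is already a square in $\Z_p^\times$, giving the claimed identity.

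Next I would establish the clopen and compact structure. The squaring map on the compact group $\Z_p^\times$ is continuous, so $(\Z_p^\times)^2$ is compact, hence closed in $\Z_p^\times$; being of finite index it is also open. Pulling back along the continuous map $\det$ shows $\calK_p$ is open and closed in $\GL_2(\Z_p)$. Since $\GL_2(\Z_p)$ is compact, the closed subset $\calK_p$ is compact, which settles compactness in i). For openness of $\calK_p$ in $\calQ_p$: the group $\GL_2(\Z_p)$ is open in $\GL_2(\Q_p)$, so $\calK_p=\calQ_p\cap\GL_2(\Z_p)$ is open in $\calQ_p$ for the subspace topology. This proves i). For ii) I would show $\calQ_p$ is open in $\GL_2(\Q_p)$: under the isomorphism $\Q_p^\times\cong\Z\times\Z_p^\times$ (valuation times unit part), $(\Q_p^\times)^2$ corresponds to $2\Z\times(\Z_p^\times)^2$, which is open since $2\Z$ is open in the discrete group $\Z$ and $(\Z_p^\times)^2$ is open in $\Z_p^\times$. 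Hence $\calQ_p=\det^{-1}((\Q_p^\times)^2)$ is an open subgroup of the locally compact group $\GL_2(\Q_p)$, and an open subgroup of a locally compact group is locally compact. Alternatively, part i) already exhibits a compact open subgroup $\calK_p$ of $\calQ_p$, which is a compact neighborhood of the identity whose translates are compact neighborhoods of every point, so $\calQ_p$ is locally compact.

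I expect no serious obstacle here: the whole argument rests on the elementary valuation computation identifying $\calK_p$ with $\calQ_p\cap\GL_2(\Z_p)$ together with the standard fact that the groups of squares are clopen in $\Q_p^\times$ and $\Z_p^\times$. The only point that warrants a moment's care is verifying that a unit which is a square in $\Q_p^\times$ is already the square of a unit, so that $\calK_p$ is genuinely cut out inside $\GL_2(\Z_p)$ by the same square condition that defines $\calQ_p$ inside $\GL_2(\Q_p)$.
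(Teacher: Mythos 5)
Your proof is correct and follows essentially the same route as the paper: both rest on the openness of $(\Z_p^\times)^2$ in $\Z_p^\times$, the identity $\calK_p=\calQ_p\cap\GL_2(\Z_p)$, and the compactness of $\GL_2(\Z_p)$. The only difference is that you supply self-contained arguments (the valuation computation for the intersection identity, the finite-index argument for openness of the squares, and the explicit openness of $(\Q_p^\times)^2$ giving local compactness of $\calQ_p$) where the paper instead cites Katok, Hewitt--Ross and Chandrasekharan.
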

\begin{proof}
  It is well known that $\GL_2(\Q_p)$ is locally compact.
  By Lemma 8, I.3, of \cite{Ch}, it follows that $\calQ_p$ is also locally compact.  
  By \cite{Ka}, Thm. 2.15, we know that $(\Z_p^\times)^2$
  is an open subgroup in $\Z_p^\times$
  Therefore, $\calK_p$ is an open subgroup in $\GL_2(\Z_p)$, which implies that is is also closed (see \cite{HW}, Thm. 5.5). As $\calQ_p\cap \GL_2(\Z_p)=\calK_p$, we find that $\calK_p$ is a compact subgroup of $\calQ_p$. 

\end{proof}

Note that there is analogue of the Cartan decomposition for the pair $(\calQ_p, \calK_p)$.
\begin{lemma}\label{lem:cartan_decomp}
 The group $\calQ_p$ can be written as a disjoint union of $\calK_p$-double cosets:
\[
\calQ_p = \bigcup_{\substack{k\le l\\ k+l\in 2\Z}}\calK_p m(p^k, p^l)\calK_p.
\]  
\end{lemma}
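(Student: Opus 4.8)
The plan is to deduce this statement from the classical Cartan decomposition for the full group $\GL_2(\Q_p)$ and then to track what the determinant conditions defining $\calQ_p$ and $\calK_p$ impose. Recall that the elementary divisor theorem over the principal ideal domain $\Z_p$ gives, for every $g\in\GL_2(\Q_p)$, a decomposition $g=\kappa_1\,m(p^k,p^l)\,\kappa_2$ with $\kappa_1,\kappa_2\in\GL_2(\Z_p)$ and uniquely determined integers $k\le l$ (the normalized valuations of the two elementary divisors of $g$). Everything will be obtained by refining this decomposition.

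First I would fix the index set and check the inclusion ``$\supseteq$''. For $g\in\calQ_p$ we have $\det(g)\in(\Q_p^\times)^2$, so $\ord_p(\det(g))$ is even. Writing $g=\kappa_1 m(p^k,p^l)\kappa_2$ as above and using $\det(\kappa_i)\in\Z_p^\times$, we get $\ord_p(\det(g))=k+l$, whence $k+l\in 2\Z$. Conversely $m(p^k,p^l)$ has determinant $p^{k+l}$, which lies in $(\Q_p^\times)^2$ precisely when $k+l$ is even; since $\calK_p\subset\calQ_p$ and $\calQ_p$ is a group, the whole double coset $\calK_p m(p^k,p^l)\calK_p$ then lies in $\calQ_p$. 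This pins the index set down to exactly the pairs with $k\le l$ and $k+l$ even.

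The only step beyond quoting the classical result — and hence the main point — is upgrading $\kappa_1,\kappa_2\in\GL_2(\Z_p)$ to elements of $\calK_p$, i.e.\ arranging that both determinants are squares of units. Here I would exploit that diagonal matrices commute with $m(p^k,p^l)$. Set $a=\det(\kappa_1)\in\Z_p^\times$ and $\delta=\diag(a^{-1},1)\in\GL_2(\Z_p)$. Since $\delta$ commutes with the diagonal matrix $m(p^k,p^l)$,
\[
g=\kappa_1 m(p^k,p^l)\kappa_2=(\kappa_1\delta)\,m(p^k,p^l)\,(\delta^{-1}\kappa_2).
\]
Now $\det(\kappa_1\delta)=a\cdot a^{-1}=1$ is a square, so $\kappa_1\delta\in\calK_p$; and since $\det(g)=\det(\kappa_1)\det(\kappa_2)p^{k+l}$ is a square while $p^{k+l}$ is a square, the product $\det(\kappa_1)\det(\kappa_2)$ is a square unit, whence $\det(\delta^{-1}\kappa_2)=a\,\det(\kappa_2)=\det(\kappa_1)\det(\kappa_2)$ is also a square and $\delta^{-1}\kappa_2\in\calK_p$. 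Thus $g\in\calK_p m(p^k,p^l)\calK_p$, which proves the inclusion ``$\subseteq$''.

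Finally, disjointness is inherited from the classical case: each $\calK_p$-double coset $\calK_p m(p^k,p^l)\calK_p$ is contained in the $\GL_2(\Z_p)$-double coset $\GL_2(\Z_p) m(p^k,p^l)\GL_2(\Z_p)$, and distinct admissible pairs $(k,l)$ with $k\le l$ give disjoint cosets of the latter by the uniqueness of elementary divisors. Hence any nonempty intersection of two $\calK_p$-double cosets would force the enclosing $\GL_2(\Z_p)$-cosets to coincide, so $(k,l)=(k',l')$. This yields the asserted disjoint union and completes the proof.
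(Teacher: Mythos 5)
Your proof is correct, but the key step is handled differently from the paper. The paper re-runs the explicit reduction algorithm behind the classical Cartan decomposition: it conjugates $g$ by the unipotent matrices $n\_(-a^{-1}c)$, $n(-a^{-1}b)$ and by diagonal matrices of the form $m(s)$ and $m(1,y^2)$, and checks case by case (with a separate argument when $a=0$, $d\neq 0$) that every transformation matrix used lies in $\calK_p$. You instead take the classical decomposition $g=\kappa_1 m(p^k,p^l)\kappa_2$ with $\kappa_1,\kappa_2\in\GL_2(\Z_p)$ as a black box and repair the determinants afterwards by inserting the commuting diagonal matrix $\delta=\diag(\det(\kappa_1)^{-1},1)$; the observation that a unit of $\Z_p$ which is a square in $\Q_p^\times$ is automatically a square in $\Z_p^\times$ then puts both factors in $\calK_p$. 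Your route is a bit cleaner: it avoids reopening the internals of the classical proof and the attendant case distinction, at the cost of the (easy) determinant bookkeeping. The disjointness argument — a nonempty intersection of $\calK_p$-double cosets would force the enclosing $\GL_2(\Z_p)$-double cosets to meet, contradicting uniqueness of elementary divisors — is identical in both versions, as is the determination of the index set by the parity of $\ord_p(\det g)$.
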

\begin{proof}
  The proof is the same as for the Cartan decomposition for $\GL_2(\Q_p)$, see e. g. \cite{Mu}, p. 17. In the quoted proof the matrix $g=\kzxz{a}{b}{c}{d}\in \GL_2(\Q_p)$ with $a\not=0$ and  $|a|_p \ge \max\{|b|_p,|c|_p,|d|_p\}$ is transformed into
  \[
  m(a,d-a^{-1}bc)= k_3gk_4,
  \]
  where $k_3 = n\_(-a^{-1}c), k_4=n(-a^{-1}b)\in \calK_p$. If we assume that $\det(g) = p^{2x}y^2$ and $a=p^ks$, then $d-a^{-1}bc = p^{2x-k}y^2s^{-1}, y, s\in \Z_p^\times,$ and
  \[
  m(p^k,p^{2x-k}) = n\_(-a^{-1}c)g\; n(-a^{-1}b)m(s)m(1,y^2).
  \]
  Therefore, all used transformation matrices are contained in $\calK_p$. We have a similar decomposition if $d\not=0$, see \cite{KL}, p. 208.

  Also, note that any two double cosets $\calK_p g_1 \calK_p$, $\calK_p g_2\calK_p$  are disjoint since otherwise the  double cosets $\GL_2(\Z_p)g_1\GL_2(\Z_p)$, $\GL_2(\Z_p)g_2\GL_2(\Z_p)$ would not be disjoint, contradicting the Cartan decomposition for $\GL_2(\Q_p)$.  
\end{proof}

We also have an analogue of the Iwasawa decomposition of $\GL_2(\Q_p)$.
\begin{lemma}\label{lem:iwasaw_decomp}
  \begin{equation}\label{eq:iwasawa_decomp}
    \calQ_p = (\calM_pN(\Q_p)\cap \calQ_p)\calK_p.
    \end{equation}
  \end{lemma}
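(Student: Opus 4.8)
The plan is to prove the Iwasawa decomposition $\calQ_p = (B(\Q_p)\cap\calQ_p)\calK_p$ by reducing it to the classical Iwasawa decomposition for $\GL_2(\Q_p)$ and then tracking the determinant condition that defines $\calQ_p$. Recall that $\GL_2(\Q_p) = B(\Q_p)\GL_2(\Z_p)$: every $g\in\GL_2(\Q_p)$ can be written $g = b\,\kappa$ with $b\in B(\Q_p)$ upper triangular and $\kappa\in\GL_2(\Z_p)$. So first I would take an arbitrary $g\in\calQ_p$, apply the classical decomposition to get $g = b\kappa$, and then argue that $b$ and $\kappa$ can be chosen to lie in $B(\Q_p)\cap\calQ_p$ and $\calK_p$ respectively.

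The one inclusion $(B(\Q_p)\cap\calQ_p)\calK_p\subseteq\calQ_p$ is immediate: determinants are multiplicative, $\det$ of an element of $B(\Q_p)\cap\calQ_p$ lies in $(\Q_p^\times)^2$, $\det$ of an element of $\calK_p$ lies in $(\Z_p^\times)^2\subset(\Q_p^\times)^2$, and $(\Q_p^\times)^2$ is a group, so the product has square determinant and is invertible over $\Q_p$; hence it lies in $\calQ_p$. The substance is the reverse inclusion. Starting from $g = b\kappa$ with $\det(g)\in(\Q_p^\times)^2$, I know $\det(b)\det(\kappa)=\det(g)$ with $\det(\kappa)\in\Z_p^\times$. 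The issue is that a priori neither $\det(b)$ nor $\det(\kappa)$ need be a square, only their product. The fix is to absorb a unit into a diagonal matrix in $B(\Q_p)$: write $\kappa = m(1,u)\kappa'$ where $u=\det(\kappa)$ and $\kappa' = m(1,u^{-1})\kappa\in\SL_2(\Z_p)\subseteq\calK_p$ has determinant $1$. Since $\det(\kappa')=1$, certainly $\kappa'\in\calK_p$. Then $g = b\,m(1,u)\kappa'$, and setting $b' = b\,m(1,u)\in B(\Q_p)$ we have $\det(b') = \det(b)u = \det(g)\in(\Q_p^\times)^2$, so $b'\in B(\Q_p)\cap\calQ_p$. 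This gives $g = b'\kappa'$ with $b'\in B(\Q_p)\cap\calQ_p$ and $\kappa'\in\calK_p$, as required.

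The main obstacle, and the point deserving the most care, is precisely this determinant bookkeeping: the subtlety is that $\calK_p$ is defined with the condition $\det\in(\Z_p^\times)^2$ rather than merely $\det\in\Z_p^\times$, so one cannot simply quote the $\GL_2$ statement and restrict. The trick above sidesteps the difficulty by pushing \emph{all} of the determinant (not just a square root of it) into the Borel factor via $m(1,u)$, leaving an $\SL_2(\Z_p)$ factor whose determinant is trivially a square. I would present this cleanly, noting that $\SL_2(\Z_p)\subseteq\calK_p$ and that $B(\Q_p)$ is closed under right multiplication by diagonal matrices, so no further verification of group membership is needed. Alternatively, one could invoke Lemma \ref{lem:cartan_decomp} together with the observation that each double coset representative $m(p^k,p^l)$ is itself diagonal, hence already in $B(\Q_p)\cap\calQ_p$, but the direct argument via the classical Iwasawa decomposition is shorter and more transparent.
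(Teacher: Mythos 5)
Your proof is correct and follows the same route as the paper, which simply says the lemma ``follows immediately from the Iwasawa decomposition for $\GL_2(\Q_p)$ by the intersection with $\calQ_p$ on both sides''; your explicit absorption of the unit $u=\det(\kappa)$ into the Borel factor via $m(1,u)$ supplies exactly the determinant bookkeeping that the paper's one-line argument leaves implicit (intersection does not distribute over a product of sets, so this step is genuinely needed). The only caveat is your closing aside: the Cartan decomposition gives $g=k_1m(p^k,p^l)k_2$ with the problematic factor $k_1$ on the \emph{left}, so that alternative would still require an Iwasawa-type rewriting of $k_1m(p^k,p^l)$ and is not actually a shortcut --- but since you relegate it to a remark and prefer the direct argument, this does not affect the proof.
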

  \begin{proof}
    This follows immediately from the Iwasawa decomposition for $\GL_2(\Q_p)$ by the intersection with $\calQ_p$ on both sides.
    \end{proof}

  As already noted, there are two cases to consider regarding the  structure of $\calH(\calQ_p//\calK_p, \omega_p)$. It depends on whether $p$ divides $|D|$ or not. 
  In both cases we will determine a set of generators with the help of Lemma \ref{lem:double_cose_hecke_algebra}. Afterwards, we will define a  {\it Satake map}. If $p$ divides $|D|$, we will show - under the restriction that $L_p'/L_p$ is {\it anisotropic} - that $\calH(\calQ_p//\calK_p, \omega_p)$ is isomorphic to a subalgebra of the spherical Hecke algebra $\calH(\calM_p//\calD_p, \omega_{p_{|S_{L_p}^{N(\Z_p)}}})$, where
\[
S_{L_p}^{N(\Z_p)}=\{\varphi\in S_{L_p}\; |\; \omega_p(n)(\varphi) = \varphi \text{ for all } n\in N(\Z_p)\}.
\] 
If $(p,|D|)=1$, $\calH(\calQ_p//\calK_p, \omega_p)$ is isomorphic to the classical spherical Hecke algebra \newline $\calH(\calQ_p//\calK_p)$, whose structure is well known.
We start with the discussion of the first mentioned case and consider the latter case subsequently.  

To describe the structure of  $\Hom_{\calK_p\cap \calK_p^g}(\omega_p, \rho_g)$, we need the decomposition of $S_{L_p}$ into irreducible submodules. This decomposition is well known, see for example \cite{NW}, Satz 2, Satz 4 and pages 521-522. We recall those parts relevant for next lemma. We denote with $\Aut(D)$ the group of all automorphisms $\varepsilon$ of $D$ satisfying $q(\varepsilon(x)) = q(x)$ for all $x\in D$. Let further $U$ be a subgroup of $\Aut(D)$ and $\widehat{U}$ the dual group of $U$. It turns out that most of the {\it primitive} characters in $\widehat{U}$ give rise to an irreducible representation The definition of a primitive character can be found on page 491 in \cite{NW}. 
We have to distinguish between the two possible anisotropic modules. For the case $\calA_p^t\oplus\calA_p^1$  Nobs and Wolfart proved the following decomposition of the space $S_{L_p}$ with respect to the Weil representation
\begin{equation}\label{eq:decomp_N_1}
  S_{L_p}\cong S_{L_p}(\chi_1)\bigoplus_{\substack{\chi\in \widehat{U} \text{ primitiv}\\\chi^2\not=1}}S_{L_p}(\chi)\oplus \left(S_{M_p}(1,-)\oplus S_{M_p}(t,-)\right), 
\end{equation}
where $\chi_1=1$ means the trivial character and 
\begin{equation}\label{eq:decomp_N_1_spaces}
  \begin{split}
    &  S_{L_p}(\chi) = \left\{f\in S_{L_p}\;|\; f(\varepsilon x) = \chi(\varepsilon)f(x) \text{ for all } x\in \calA_p^t\oplus\calA_p^1 \text{ and all } \varepsilon \in U\right\},\\
    & S_{M_p}(t,-) = \left\{f\in S_{M_p}\; |\; f(-x) = -f(x) \text{ for all } x\in \calA_p^t \right\},
  \end{split}
   \end{equation}
$M_p$ being a $p$-adic lattice with $M_p'/M_p\cong \calA_p^t$. The space $S_{M_p}(1,-)$ is defined the same way by simply replacing $t$ with $1$. We write
\begin{equation}\label{eq:decomp_N_1_element}
f = f_1+\sum_{\substack{\chi\in \widehat{U} \text{ primitiv}\\\chi^2\not=1}}f_\chi+ f_++f_-
\end{equation}
for an element in $S_{L_p}$ with respect to \eqref{eq:decomp_N_1_spaces}. 
It is shown in \cite{NW} that $S_{L_p}(\chi_1)$ and $S_{L_p}(\chi_2)$ are isomorphic if and only if $\chi_1 = \chi_2$ or $\chi_1= \overline{\chi_2}$. The remaining modules in \eqref{eq:decomp_N_1} are  (pairwise) not isomorphic. The isomorphism between $S_{L_p}(\chi)$ and $S_{L_p}(\overline{\chi})$ is given explicitly in terms of the generators of $S_{L_p}(\chi)$:
A generator
\begin{equation}\label{eq:intertwining_op_conj}
f_{\mu_p}(\chi)= \sum_{\varepsilon\in U}\chi(\varepsilon)\varphi_p^{\varepsilon(\mu_p)}
\end{equation}
is mapped to $f_{\overline{\mu_p}}(\overline{\chi})$, where $\overline{\mu_p}$ is $(a+b,-b)$ for $\mu_p = (a,b)$. We denote this intertwining operator by $T^{\overline{\chi}}$. 

\subsection{The case of primes $p$ dividing $|D|$}
\begin{lemma}\label{lem:struct_hecke_double_coset_1}
 Let $D_p$ be an anisotropic discriminant form and $g=m(p^k,p^l)\in \calQ_p$ with $(k,l)\in \Lambda_+$. Put $\rho_g = (\omega_p)_g$. 
  \begin{enumerate}
    \item[i)]
      If  $k < l$, then the space  $\Hom_{\calK_p\cap \calK_p^g}(\omega_p, \rho_g)$ is generated by the map
      \begin{equation}\label{eq:intertwiner_1}
  T(k,l): S_{L_p}\rightarrow S_{L_p}^{N(\Z_p)}, \quad \varphi_p^{(\mu_p)} \mapsto T(k,l)(\varphi_p^{(\mu_p)}) = \varphi_p^{(p^{(l-k)/2}\mu_p)}= \varphi_p^{(0)}.
  \end{equation}
\item[ii)]
  If  $D_p\cong \calA_p^t\oplus \calA_p^1$, then $S_{L_p}$ decomposes into the irreducible submodules $S_{L_p}(\chi_1)$, $S_{L_p}(\chi)$, $S_{M_p}(1,-)$ and $S_{M_p}(t,-)$. 
  For $k=l$ the space $\Hom_{\calK_p\cap \calK_p^g}(\omega_p, \rho_g)$ is then generated by the maps $T(k,k)^{\chi}$,  $T(k,k)^{\overline{\chi}}$, $T(k,k)^{\chi_1}$, $T(k,k)^+$ and $T(k,k)^-$ where
  \begin{equation}\label{eq:intertwiner_k_equal_l}
    \begin{split}
     & T(k,k)^{\chi}(f) =
      f_\chi, \quad
     T(k,k)^{\overline{\chi}}(f) =
     T^{\overline{\chi}}(f_\chi), \\
      & T(k,k)^{+}(f) =
     f_+,\quad T(k,k)^-(f) = f_- \text{ and }\\
      & T(k,k)^{\chi_1}(f) = f_1 
      \end{split}
  \end{equation}
and $f$ is  an element in $S_{L_p}$ as in \eqref{eq:decomp_N_1_element}. 
\item[iii)]
  If  $D_p\cong \calA_p^t$, then $S_{L_p}$ decomposes into the irreducible submodules $S_{L_p}(t,+)$ and $S_{L_p}(t,-)$, where these spaces are defined as in \eqref{eq:decomp_N_1_spaces} with $M_p$ replaced by $L_p$.
  For $k=l$ the space $\Hom_{\calK_p\cap \calK_p^g}(\omega_p, \rho_g)$ is then generated by the two  maps $T(k,k)^{+}$ and $T(k,k)^-$, where
  \begin{equation}\label{eq:intertwiner_k_equal_l_one_dim}
 T(k,k)^{+}(f_++f_-) =
     f_+ \text{ and } T(k,k)^-(f) = f_-.
  \end{equation}
  \end{enumerate}
\end{lemma}
\begin{proof}
  In light of Lemma \ref{lem:cartan_decomp}, it clearly suffices to choose $g=m(p^k,p^l)$ with $(k,l)\in\Lambda_+$.
  
  i)  First, note that

  \[
  m(p^k,p^l)\kzxz{a}{b}{c}{d}m(p^k,p^l)^{-1} = \kzxz{a}{bp^{k-l}}{cp^{l-k}}{d}.
  \]
  In particular, for $\kzxz{a}{b}{c}{d} = n(p^{l-k})$ we find that $n(1)$ is an element of $\calK_p\cap \calK_p^g$. Thus, for any $F\in \Hom_{\calK_p\cap \calK_p^g}(\omega_p, \rho_g)$ the equation
  \begin{align*}
  F\circ \omega_p(m(p^k,p^l)^{-1}n(1)m(p^k,p^l)) = \omega_p(n(1))\circ F
   & \Longleftrightarrow \omega_p(n(1))\circ F = F\circ \omega_p(n(p^{l-k}))
      \\
      &\Longleftrightarrow \omega_p(n(1))\circ F = F
 \end{align*}
  must hold.  For the last equivalence we have used that the level of $D_p$ is $p$. It follows that the image of $F$ is a subset of $S_{L_p}^{N(\Z_p)}$. Since the identity
  $\omega_p(n(b))\varphi_p^{(\gamma)} = \varphi_p^{(\gamma)}$ holds for all $b\in \Z_p^\times$ if and only if $\gamma$ is isotropic and $D_p$ anisotropic, we can conclude that $S_{L_p}^{N(\Z_p)} = \C\varphi_p^{(0)}$. Therefore, $F$ is a scalar multiple of the map $\varphi_p^{(\lambda_p)}\mapsto \varphi_p^{(0)}$ and has the claimed form.

  ii) The decomposition of $S_{L_p}$ into irreducible submodules is well known. For the quadratic module $\calA_p^t$ see for example \cite{NW}, Theorem 4.  The case of the quadratic module $\calA_p^t\oplus \calA_p^1$ is treated in \cite{NW}, Theorem 2 and Section 9, p. 521-522.  In the case $k=l$ the equation $F\circ \rho_g(h) = \omega_p(h)\circ F$ simplifies to $F\circ \omega_p(h) = \omega_p(h)\circ F$ for all $h\in \calK_p$. Thus, $F$ is an intertwining operator for $\omega_p$. The structure of the space of intertwining operators can be found in books about representation theory, cf. \cite{JL}, Chapter 11.  
  \end{proof}


In view of Lemma \ref{lem:double_cose_hecke_algebra} and Lemma \ref{lem:struct_hecke_double_coset_1}, the following corollary is immediate. 

\begin{corollary}\label{cor:struct_hecke_algebra}
  Let $p$ be a prime dividing $|D|$,  $D_p$ anisotropic and $(k,l)\in \Lambda_+$.  Then the Hecke algebra $\calH(\calQ_p//\calK_p, \omega_p)$ is generated by the following elements:
  \begin{enumerate}
  \item[i)]
    For $k<l$
    \begin{equation}\label{eq:gen_k_less_l}
      T_{k,l}(k_1 m(p^{k},p^l) k_2) = \omega_p(k_1)\circ T(k,l)\circ \omega_p(k_2),
    \end{equation}
    where $T_{k,l}$ is only supported on $\calK_pm(p^k,p^l)\calK_p$. Here  $T(k,l)$ is the intertwining operator specified in Lemma \ref{lem:struct_hecke_double_coset_1}, i).
  \item[ii)]
    For $k=l$ 
    \begin{equation}\label{eq:gen_k_equal_l}
      T_{k,k}(k_1 m(p^k, p^k) k_2) = \omega_p(k_1)\circ T(k,k)\circ \omega_p(k_2),
    \end{equation}
    where $\supp(T_{k,k}^{(i)}) = \calK_pm(p^k,p^k)\calK_p$ and $T(k,k)$ is  one of the operators
\[
T(k,k)^\chi, T(k,k)^{\overline{\chi}}, T(k,k)^{+}, T(k,k)^- \text{ or } T(k,k)^{\chi_1}
\]
given in Lemma \ref{lem:struct_hecke_double_coset_1}, ii).
  \end{enumerate}
\end{corollary}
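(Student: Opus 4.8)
The plan is to reduce the statement to three structural results already established: the Cartan decomposition of Lemma \ref{lem:cartan_decomp}, the identification of single-coset functions with intertwining operators in Lemma \ref{lem:double_cose_hecke_algebra}, and the explicit description of those intertwining spaces in Lemma \ref{lem:struct_hecke_double_coset_1}. The corollary then follows by a bookkeeping argument assembling the local data over all double cosets.

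First I would recall from Definition \ref{def:spherical_hecke_algebra} that every $f\in\calH(\calQ_p//\calK_p,\omega_p)$ is compactly supported modulo $\calK_p$ and hence vanishes outside finitely many double cosets; writing $f$ as the sum of its restrictions to these cosets shows that the algebra is spanned as a $\C$-vector space (a fortiori generated as a $\C$-algebra) by the functions supported on a single double coset. By Lemma \ref{lem:cartan_decomp} the double cosets are exactly $\calK_p m(p^k,p^l)\calK_p$ with $k\le l$ and $k+l\in 2\Z$, so it suffices to determine, for each such pair $(k,l)$, the space of functions supported on $\calK_p m(p^k,p^l)\calK_p$.

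Fixing $g=m(p^k,p^l)$, I would apply Lemma \ref{lem:double_cose_hecke_algebra}: the space of functions supported on $\calK_p g\calK_p$ is isomorphic to $\Hom_{\calK_p\cap\calK_p^g}(\omega_p,\rho_g)$, the isomorphism sending $f$ to $f(g)$ with inverse $F\mapsto f$ determined by $f(k_1 g k_2)=\omega_p(k_1)\circ F\circ\omega_p(k_2)$. Now I split into the two cases of Lemma \ref{lem:struct_hecke_double_coset_1}. For $k<l$ that lemma shows the intertwining space is one-dimensional, spanned by the operator $T(k,l)$; transporting this generator through the inverse isomorphism yields precisely the function $T_{k,l}$ of \eqref{eq:gen_k_less_l}, which therefore spans the single-coset space. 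For $k=l$ the intertwining space is spanned by the projections $T(k,k)^{(i)}$ onto the irreducible summands of $S_{L_p}$ (four summands when $D_p\cong\calA_p^t\oplus\calA_p^1$, two when $D_p\cong\calA_p^t$), and transporting each $T(k,k)^{(i)}$ gives the function $T_{k,k}^{(i)}$ of \eqref{eq:gen_k_equal_l}. Collecting the generators over all admissible $(k,l)$ produces the asserted spanning set, and the operator $T_k$ is simply the function attached to the sum $T(k)$ of these projections.

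The only point requiring genuine care is the well-definedness of the functions $T_{k,l}$ and $T_{k,k}^{(i)}$: the defining formula $f(k_1 g k_2)=\omega_p(k_1)\circ F\circ\omega_p(k_2)$ must be independent of the factorisation of an element of the double coset. This independence is exactly the intertwining condition $F\circ\rho_g(h)=\omega_p(h)\circ F$ for $h\in\calK_p\cap\calK_p^g$ characterising $\Hom_{\calK_p\cap\calK_p^g}(\omega_p,\rho_g)$, and it is already verified in the proof of Lemma \ref{lem:double_cose_hecke_algebra}. Consequently no new estimate or construction is needed; the corollary is a formal consequence of the cited lemmas, the residual work being merely to match the abstract generators of each $\Hom$-space with the explicitly named functions in the statement.
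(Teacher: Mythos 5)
Your argument is exactly the one the paper intends: the corollary is stated without proof precisely because it follows formally from the Cartan decomposition (Lemma \ref{lem:cartan_decomp}), the identification of single-coset functions with intertwining operators (Lemma \ref{lem:double_cose_hecke_algebra}), and the explicit description of those spaces (Lemma \ref{lem:struct_hecke_double_coset_1}). Your reduction and bookkeeping, including the remark on well-definedness, match the intended reasoning, so the proposal is correct.
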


The following theorem investigates the structure of the Hecke algebra $\calH(\calM_p//\calD_p, \omega_{p_{|S_{L_p}^{N(\Z_p)}}})$ assuming $D_p$ is anisotropic. 

\begin{theorem}\label{rem:hecke_algebra_mp}
Let $p$ be an odd prime dividing $|D|$ and $D_p$ anisotropic. 
  \begin{enumerate}
  \item[i)]
    Then
    \begin{equation}
      \begin{split}
      \omega_p(m(t_1,t_2))\varphi_p^{(0)} &=
      \begin{cases}
        \leg{t_1}{|D_p|}\varphi_p^{(0)}, & |D_p|=p,\\
        \varphi_p^{(0)}, & |D_p|=p^2
      \end{cases} \\
      &= \chi_{D_p}(t_1)\varphi_p^{(0)}
      \end{split}
    \end{equation}
    for all $m(t_1,t_2)\in \calD_p$.
  \item[ii)]
     Then $S_{L_p}^{N(\Z_p)}$ is equal to $\C\varphi_p^{(0)}$ and  the Hecke algebra 
    $\calH(\calM_p//\calD_p, \omega_{p_{|S_{L_p}^{N(\Z_p)}}})$ isomorphic 
     to the scalar valued Hecke algebra $\calH(\calM_p//\calD_p)$.

  \end{enumerate}
\end{theorem}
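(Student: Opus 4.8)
The plan is to prove the two assertions in sequence, with part (i) providing the key computation that makes part (ii) essentially formal.

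\textbf{Part i).} First I would compute $\omega_p(m(t_1,t_2))\varphi_p^{(0)}$ directly. The element $m(t_1,t_2)\in\calD_p$ has $t_1,t_2\in\Z_p^\times$ with $\det = t_1 t_2\in(\Z_p^\times)^2$; write $t_1t_2 = t^2$. Using the factorization $m(t_1,t_2) = m(t)\cdot m(t_1 t^{-1}, t_2 t^{-1})$, where $m(t)=\kzxz{t}{0}{0}{t}$ is scalar and the second factor lies in $\SL_2(\Z_p)$, I would apply Definition \ref{def:ext_local_weil_repr} together with the third formula of \eqref{eq:weil_rep_explicit}. The scalar factor contributes $\chi_{D_p}(t)$ via \eqref{eq:weil_repr_scalar_matrix}, and the $\SL_2(\Z_p)$-diagonal factor $m(a)$ (with $a = t_1 t^{-1}$) sends $\varphi_p^{(0)}\mapsto \chi_{V,p}(a)\varphi_p^{(a^{-1}\cdot 0)} = \chi_{D_p}(a)\varphi_p^{(0)}$ by \eqref{eq:local_hilbert_symbol}. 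Multiplying, and using that $\chi_{D_p}$ is a character so $\chi_{D_p}(t)\chi_{D_p}(t_1 t^{-1}) = \chi_{D_p}(t_1)$, and then $\chi_{D_p}(t_1)=\chi_{D_p}(t_2)$ because $\chi_{D_p}(t_1 t_2)=\chi_{D_p}(t^2)=1$, I obtain $\chi_{D_p}(t_2)\varphi_p^{(0)}$. Finally I would read off the case distinction: since $D_p$ is anisotropic of odd prime $p$, Lemma \ref{lem:weil_act_diagonal} and \eqref{eq:explicit_quadratic_char_D_odd} give $\chi_{D_p}(t_2)=\leg{t_2}{|D_p|}$, which equals $\leg{t_2}{p}$ when $|D_p|=p$ and equals $1$ when $|D_p|=p^2$ (a square modulus).

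\textbf{Part ii).} The identity $S_{L_p}^{N(\Z_p)}=\C\varphi_p^{(0)}$ was already established inside the proof of Lemma \ref{lem:struct_hecke_double_coset_1}, i): for anisotropic $D_p$, $\omega_p(n(b))\varphi_p^{(\gamma)}=\varphi_p^{(\gamma)}$ for all $b$ forces $\gamma$ isotropic, hence $\gamma=0$. So I would just cite that. For the algebra isomorphism, the point is that on the one-dimensional space $\C\varphi_p^{(0)}$ the representation $\omega_{p|S_{L_p}^{N(\Z_p)}}$ is the \emph{scalar} character $m(t_1,t_2)\mapsto\chi_{D_p}(t_2)$ computed in part (i). I would define the map $\Phi:\calH(\calM_p//\calD_p)\to\calH(\calM_p//\calD_p,\omega_{p|S_{L_p}^{N(\Z_p)}})$ sending a scalar bi-invariant function $f$ to the $\End(\C\varphi_p^{(0)})$-valued function $g\mapsto f(g)\cdot\id$, and check that the $\calD_p$-bi-equivariance condition of Definition \ref{def:spherical_hecke_algebra}, namely $F(k_1 g k_2)=\omega_p(k_1)\circ F(g)\circ\omega_p(k_2)$, is compatible with scalar bi-invariance because $\omega_p(k_1),\omega_p(k_2)$ act as the scalars $\chi_{D_p}$ on $\C\varphi_p^{(0)}$ and these scalars cancel against the transformation of $f$. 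Concretely, an arbitrary $F$ supported on a single coset $\calD_p m(p^a,p^b)\calD_p$ is determined by the single endomorphism $F(m(p^a,p^b))\in\End(\C\varphi_p^{(0)})\cong\C$, via Lemma \ref{lem:double_cose_hecke_algebra}, so $\Phi$ is a bijection on each double-coset component; I would then verify it respects convolution \eqref{eq:convolution}, which reduces to the scalar convolution since all the $\End$-factors are scalars multiplying the identity.

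\textbf{Main obstacle.} The genuine content is the character computation in part (i): correctly tracking the Weil-index and Hilbert-symbol factors through the decomposition $m(t_1,t_2)=m(t)\cdot(\text{element of }\SL_2(\Z_p))$ and confirming that the dependence collapses to the single quadratic character $\chi_{D_p}(t_2)$. Once that is done, part (ii) is formal: the representation on the relevant space being a \emph{scalar} character is exactly what forces $\calH(\calM_p//\calD_p,\omega_{p|S_{L_p}^{N(\Z_p)}})$ to coincide with the scalar algebra $\calH(\calM_p//\calD_p)$, since for a one-dimensional representation $\End(V)=\C$ and the intertwiner spaces of Lemma \ref{lem:double_cose_hecke_algebra} are each one-dimensional. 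I would take care to confirm the multiplicativity $\Phi(f_1\ast f_2)=\Phi(f_1)\ast\Phi(f_2)$ explicitly, as this is the only place where the measure normalization (fixed so that $\int_{\calD_p}dg=1$) enters and where one must be sure the scalar characters do not obstruct the convolution.
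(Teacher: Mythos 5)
Your part (i) is essentially identical to the paper's proof: the same factorization $m(t_1,t_2)=m(t,t)\cdot m(t^{-1}t_1,t^{-1}t_2)$ into a scalar matrix and an $\SL_2(\Z_p)$ element, the same use of \eqref{eq:weil_repr_scalar_matrix}, \eqref{eq:weil_rep_explicit} and \eqref{eq:local_hilbert_symbol}, and the same collapse to $\chi_{D_p}(t_2)$ via $\chi_{D_p}(t^2)=1$. Part (ii) also follows the paper's route (one-dimensionality of $S_{L_p}^{N(\Z_p)}$, reduction to a Hecke algebra twisted by the scalar character $\chi_{D_p}$, identification with $\calH(\calM_p//\calD_p)$ as in Howe).

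There is, however, one concrete flaw in your part (ii): the map $\Phi(f)(g)=f(g)\cdot\id$ does not land in $\calH(\calM_p//\calD_p,\omega_{p|S_{L_p}^{N(\Z_p)}})$. The target algebra requires $F(k_1gk_2)=\chi_{D_p}(k_1)F(g)\chi_{D_p}(k_2)$, whereas a bi-invariant $f$ gives $F(k_1gk_2)=F(g)$; since $\chi_{D_p}$ is nontrivial on $\calD_p$ when $|D_p|=p$ (take $t_1=t_2$ a nonresidue, so $m(t_1,t_2)\in\calD_p$ but $\chi_{D_p}(t_2)=-1$), the two conditions genuinely disagree and nothing ``cancels against the transformation of $f$'' --- $f$ has no transformation. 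The fix is exactly what the paper does in \eqref{eq:isom_hecke_algebra_mp}: extend $\chi_{D_p}$ trivially to an unramified quasi-character of all of $\calM_p$ and twist, i.e.\ set $\Phi(f)(m)=\chi_{D_p}(m)f(m)\cdot\id$; then the equivariance and the multiplicativity under convolution go through as you describe. With that correction your argument is complete and matches the paper.
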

\begin{proof}
  As already mentioned in Section \ref{sec:preliminaries}, the order of an isotropic quadratic module $D_p$ is either  $p^2$ or  $p$.
  
  i) Let $m(t_1,t_2)\in \calD_p$ with $\det(m(t_1,t_2))=t^2\in (\Z_p^\times)^2$. Then by \eqref{eq:local_weil_repr}
\begin{align*}
  \omega_p(m(t_1,t_2))\varphi_p^{(0)} &= \omega_p(m(t,t))\omega_p(m(t^{-1}t_1,t^{-1}t_2))\varphi_p^{(0)} \\
  &=\leg{t}{|D_p|}\leg{t^{-1}t_1}{|D_p|}\varphi_p^{(0)},
\end{align*}
where we have used  \eqref{eq:local_hilbert_symbol} and \eqref{eq:weil_repr_scalar_matrix}.
The claimed result now follows. 

For ii)
we first note that if  $D_p$ is anisotropic, the space $S_{L_p}^{N(\Z_p)}$ is equal to $\C\varphi_p^{(0)}$ and thus one-dimensional. From  i) we know that $\calD_p$ acts via  $\omega_p$ on $S_{L_p}^{N(\Z_p)}$ by multiplication with the quadratic character $\chi_{D_p}$. 
Consequently,
\[
\calH(\calM_p//\calD_p, \omega_{p_{|S_{L_p}^{N(\Z_p)}}}) = \calH(\calM_p//\calD_p, \chi_{D_p}),
\]
where the latter Hecke algebra is meant in the sense of Definition \ref{def:spherical_hecke_algebra} with the one-dimensional representation $\rho = \chi_{D_p}$. 
The structure of the latter algebra was discussed in \cite{Ho}, Remark 5.1. It was stated there that $\calH(\calM_p//\calD_p, \chi_{D_p})$ is isomorphic  to the usual spherical algebra $\calH(\calM_p//\calD_p)$. 
To state this isomorphism explicitly, we specify a set of generators for the former algebra. It is generated by elements of the form 
\[
T_{k,l}(m(t_1,t_2)m(p^k,p^l)m(s_1,s_2)) = \chi_{D_p}(m(t_1,t_2))\circ T(k,l)\circ \chi_{D_p}(m(s_1,s_2)) 
\]
with
\[
T(k,l) = \mathbbm{1}_{\calD_pm(p^k,p^l)\calD_p}\id_{S_{L_p}^{N(\Z_p)}}. 
\]
The isomorphism  is then given by
\begin{equation}\label{eq:isom_hecke_algebra_mp}
I_{\chi_{D_p}}: T_{k,l} =\mathbbm{1}_{\calD_pm(p^k, p^l)\calD_p}\cdot\id_{S_{L_p}^{N(\Z_p)}}\mapsto \chi_{D_p}T_{k,l},
\end{equation}
where have extended $\chi_{D_p}$ trivially to a quasi-character on the whole group $\calM_p$ (see also \cite{Ho}). 
\end{proof}
The following identification of $\calH(\calM_p//\calD_p)$ with the group algebra $\C[\calM_p/\calD_p]$ via
\begin{equation}\label{eq:hecke_algebra_group_algebra}
\mathbbm{1}_{\calD_pm(p^k,p^l)\calD_p}\mapsto \mathbbm{1}_{m(p^k,p^l)\calD_p}.
\end{equation}
will be used in \cite{St2}, Section 5.

We now define the before mentioned  Satake map to further clarify the structure of $\calH(\calQ_p//\calK_p, \omega_p)$ and to connect it to the algebra $\calH(\calM_p//\calD_p)$.
\begin{equation}\label{def:satake_map}
  \begin{split}
    & \calS: \calH(\calQ_p//\calK_p, \omega_p) \rightarrow \calH(\calM_p//\calD_p,\omega_p{{{_{|S_{L_p}^{N(\Z_p)}}}}} ), \\
  &  T\mapsto \left(m\mapsto \delta(m)^{1/2}\sum_{n\in N(\Q_p)/N(\Z_p)}T(mn)_{|S_{L_p}^{N(\Z_p)}}\right).
  \end{split}
\end{equation}

\begin{remark}\label{rem:satake_map}
  \begin{enumerate}
  \item[i)]
   Note that this definition is analogous to the one given by Herzig (\cite{He}) over a field in characteristic $p$. The modulus character $\delta\kzxz{m_1}{0}{0}{m_2} = \left|\frac{m_1}{m_2}\right|_p$ is also part of the classical  Satake map (see e. g. \cite{De}, Chap. 8), where it ensures that the image of the Satake map is invariant under the natural action of the Weyl group. Herzig omitted the modulus character in his definition of the Satake map  as it does not produce the invariance under the action of the Weyl group. Nevertheless, we keep it in the definition of $\calS$ since it indeed does share the property of invariance under the Weyl group. 
  \item[ii)]
    With the same arguments as after the statement of Theorem 1.2 and as in its proof  (Step 0) in \cite{He}, it can be proved that $\calS$ is a well defined map. To prove that $\calS$ is a $\C-$algebra homomorphism all calculations of Step 2 in the proof of Theorem 1.2 in \cite{He} remain valid in our situation. 
    \end{enumerate}
  \end{remark}
As is shown in Lemma \ref{lem:struct_hecke_double_coset_1}  and Corollary \ref{cor:struct_hecke_algebra}, the space of maps in $\calH(\calQ_p//\calK_p,\omega_p)$ with support equal to $\calK_pm(p^k,p^k)\calK_p$ is two-dimensional or of higher dimension if $\omega_p$ is considered on the whole space $S_{L_p}$. If we restrict ourselves to an irreducible subspace of $S_{L_p}$,  the before mentioned space is one-dimensional. On the one hand, this condition  would guarantee that the Satake map \eqref{def:satake_map} is indeed an isomorphism (without it, \eqref{def:satake_map} is not even injective, as is easily checked). On the other hand, it is to restrictive for our purposes.  So, in order to obtain an isomorphism between $\calH(\calQ_p//\calK_p, \omega_p)$ and a subalgebra of  $\calH(\calM_p//\calD_p, \omega_{p_{|S_{L_p}^{N(\Z_p)}}})$ via \eqref{def:satake_map}, we restrict $\calH(\calQ_p//\calK_p, \omega_p)$ to a subalgebra where the space of maps supported on the double coset $\calK_pm(p^k,p^k)\calK_p$ is replaced with the  subspace generated by the operator
\begin{equation}
  \begin{split}
    &  T_{k}(k_1 m(p^k, p^l) k_2) = \omega_p(k_1)\circ T(k)\circ \omega_p(k_2) \text{ with } \\
    & T(k)= \begin{cases}
      T^{\chi_1}(k,k)+\sum_{\substack{\chi\in \widehat{U} \text{ primitiv}\\\chi^2\not=1}}T^\chi(k,k)+ T^+(k,k)+T^-(k,k),  & \text{ for } \calA_p^t\oplus \calA_p^1\\
      T^+(k,k)+T^-(k,k), & \text{ for } \calA_p^t\\
    \end{cases}\\
& = \id_{S_{L_p}}.  
  \end{split}
\end{equation}
It will turn out that $T(k)$ is compatible with the Hecke operator $T(m(p^{-k},p^{-k}))$, see  Theorem \ref{thm;action_hecke_op_notcoprime}, which is the rationale for this choice.  

For the next theorem we fix some notation:\newline
Let $N(\calM_p)$ be the normalizer of $\calM_p$ in $\calQ_p$. Then the group $N(\calM_p)/\calM_p$ is called {\it Weyl group}. It is isomorphic to the symmetric group $S_2$ and acts on $\calM_p$ by changing the entries $t_1,t_2$ of a matrix $m(t_1,t_2)$. 

Let $(k,l)\in \Lambda_+$. By $\calH^+(\calQ_p//\calK_p, \omega_p)$ we mean the subalgebra of $\calH(\calQ_p//\calK_p, \omega_p)$ generated by $T$ with
\begin{equation}\label{eq:generators_ncopr}
T=
\begin{cases}
  T_{k,l}, & k <l,\\
  T_k, & k=l
  \end{cases}
\end{equation}
as specified in Corollary \ref{cor:struct_hecke_algebra}. In order to state results for all generators of $\calH^+(\calQ_p//\calK_p, \omega_p)$ in subsequent sections we often write $T_{k,k}$ instead of $T_k$.  
Furthermore, let
\begin{equation}\label{eq:generators_copr}
  \begin{split}
  & \tau_k =
  \mathbbm{1}_{\calD_pm(p^k,p^k)\calD_p}\cdot\id_{S_{L_p}},\\
  &  \tau_{k,l} =
  \mathbbm{1}_{\calD_pm(p^k,p^l)\calD_p}\cdot\id_{S_{L_p}} + \mathbbm{1}_{\calD_pm(p^l,p^k)\calD_p}\cdot\id_{S_{L_p}}.\\
  \end{split}
  \end{equation}
Then we denote  by
$\calH(\calM_p//\calD_p, \omega_{p_{|S_{L_p}^{N(\Z_p)}}})^W$  the subalgebra of $\calH(\calM_p//\calD_p, \omega_{p_{|S_{L_p}^{N(\Z_p)}}})$ generated by
  $\tau_{k,l}$ and $\tau_k$, which is nothing else but the subalgebra of all elements of $\calH(\calM_p//\calD_p,\omega_{p_{|S_{L_p}^{N(\Z_p)}}})$ invariant under the Weyl group $W$.

\begin{theorem}\label{thm:satake_isom}
Let $p$ be a prime dividing $|D|$ and $D_p$ anisotropic.
 
  Then the Hecke algebras $\calH^+(\calQ_p//\calK_p, \omega_p)$ and $\calH(\calM_p//\calD_p, \omega_{p_{|S_{L_p}^{N(\Z_p)}}})^W$ are isomorphic. 
  \end{theorem}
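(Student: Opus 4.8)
The plan is to realize the claimed isomorphism as the restriction of the Satake map $\calS$ of \eqref{def:satake_map} to the subalgebra $\calH^+(\calQ_p//\calK_p,\omega_p)$. By Remark \ref{rem:satake_map} we already know that $\calS$ is a well-defined homomorphism of $\C$-algebras; hence it suffices to show that $\calS$ maps $\calH^+(\calQ_p//\calK_p,\omega_p)$ \emph{bijectively} onto $\calH^W(\calM_p//\calD_p,\omega_{p_{|S_{L_p}^{N(\Z_p)}}})$. Since $\calH^+$ and $\calH^W$ are generated (as $\C$-algebras) by the families $\{T_{k,l},T_k\}$ and $\{\tau_{k,l},\tau_k\}$ respectively, all of which are indexed by $(k,l)\in\Lambda_+$, the homomorphism property reduces the problem to a computation of $\calS$ on the generators. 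Concretely, I would prove three things: each $\calS(T_{k,l})$ lies in $\calH^W$, the images generate $\calH^W$, and the restricted map is injective.

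First I would compute $\calS$ on a generator. By Theorem \ref{rem:hecke_algebra_mp} the space $S_{L_p}^{N(\Z_p)}$ equals $\C\varphi_p^{(0)}$ and $\calD_p$ acts on it through the quadratic character $\chi_{D_p}$, so the endomorphism-valued sum in \eqref{def:satake_map} collapses to a scalar computation against $\varphi_p^{(0)}$. For $T=T_{k,l}$ supported on $\calK_p m(p^k,p^l)\calK_p$, evaluating $\calS(T_{k,l})(m(p^a,p^b))$ amounts to determining, via the Cartan and Iwasawa decompositions (Lemmas \ref{lem:cartan_decomp} and \ref{lem:iwasaw_decomp}), for which $n\in N(\Q_p)/N(\Z_p)$ the product $m(p^a,p^b)n$ lands in $\calK_p m(p^k,p^l)\calK_p$, and then weighting by the intertwiner scalar $g(D_p)/g_{p^l}(D_p)$ of \eqref{eq:intertwiner_1}. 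This is exactly the classical $\GL_2$ Satake computation (cf. \cite{He}, \cite{De}) transported to $S_{L_p}^{N(\Z_p)}$ and twisted by $\chi_{D_p}$. The upshot should be a triangular formula
\[
\calS(T_{k,l}) = c_{k,l}\,\tau_{k,l} + \sum_{\substack{(a,b)\in\Lambda_+\\ a+b=k+l,\; a>k}} d_{a,b}\,\tau_{a,b},\qquad c_{k,l}\neq 0,
\]
where the sum runs over strictly more balanced diagonal double cosets (and $\tau_a$ replaces $\tau_{a,b}$ when $a=b$), and the leading coefficient $c_{k,l}$ equals $\delta(m(p^k,p^l))^{1/2}=p^{(l-k)/2}$ times the nonzero scalar $g(D_p)/g_{p^l}(D_p)$. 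The modulus factor $\delta^{1/2}$ moreover makes the coefficients symmetric under $(a,b)\mapsto(b,a)$, which is precisely the statement that $\calS(T_{k,l})$ is invariant under the Weyl group $W$; by Remark \ref{rem:satake_map}~i) this is the role of $\delta^{1/2}$. Hence $\calS(T_{k,l})\in\calH^W$, and an analogous (simpler) computation gives $\calS(T_k)=\tfrac{g(D_p)}{g_{p^k}(D_p)}\tau_k$ by \eqref{eq:sum_intertwiner_k_equal_l}.

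Finally I would conclude from the triangular structure. Ordering $\Lambda_+$ by the balancing partial order (fixing $k+l$ and increasing $k$ toward the diagonal), the displayed formula shows that $\calS$ sends the generating set $\{T_{k,l},T_k\}$ to $\{\tau_{k,l},\tau_k\}$ by a unitriangular (up to the nonzero scalars $c_{k,l}$) change of basis within each fixed value of $k+l$. Consequently $\calS(\calH^+)$ contains every $\tau_{k,l}$ and $\tau_k$, giving surjectivity onto $\calH^W=\Span\{\tau_{k,l},\tau_k\}$, while the invertibility of the triangular coefficient matrix gives injectivity on $\calH^+$. This injectivity is exactly what is lost on the full algebra $\calH(\calQ_p//\calK_p,\omega_p)$: by Lemma \ref{lem:struct_hecke_double_coset_1}~ii)--iii) the double cosets $\calK_p m(p^k,p^k)\calK_p$ carry a two- or four-dimensional intertwiner space, all of whose elements restrict to the \emph{same} scalar on $\C\varphi_p^{(0)}$, so only the combination $T_k$ survives and the passage to $\calH^+$ removes the kernel. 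Together with the algebra-homomorphism property, this shows $\calS|_{\calH^+}$ is the desired isomorphism. I expect the main obstacle to be the explicit support-and-coefficient computation of the middle paragraph, specifically verifying the nonvanishing of the leading coefficient and the Weyl symmetry; the genuinely new content beyond the classical case is the bookkeeping of the Gauss-sum and $\chi_{D_p}$ factors, the rest following because everything collapses to scalars on the one-dimensional space $S_{L_p}^{N(\Z_p)}$.
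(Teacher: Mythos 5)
Your proposal is correct and follows essentially the same route as the paper: reduce to the generators via Remark \ref{rem:satake_map}, compute $\calS(T_{k,l})$ explicitly by determining which $m(p^\nu,p^{k+l-\nu})n$ meet $\calK_p m(p^k,p^l)\calK_p$, obtain a Weyl-invariant combination of the $\tau_{a,b}$ with leading coefficient $p^{(l-k)/2}g(D_p)/g_{p^l}(D_p)$ (and $\calS(T_k)=\tfrac{g(D_p)}{g_{p^k}(D_p)}\tau_k$), and conclude injectivity and surjectivity from the triangular structure as in \cite{De}. The explicit support-and-coefficient computation you defer is exactly what occupies the bulk of the paper's proof, and your predicted answer, including the symmetry of the intermediate coefficients and the role of $\calH^+$ in removing the kernel, matches equation \eqref{eq:image_satake_map}.
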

\begin{proof}
  In view of Remark \ref{rem:satake_map}, it suffices to prove that $\calS$ is injective and surjective. To this end, we compute $\calS(T)$ for a non-zero $T\in \calH^+(\calQ_p//\calK_p, \omega_p)$. By Corollary \ref{cor:struct_hecke_algebra}, we may assume that $T$ is either $T_{k,l}$ or $T_{k}$ with $(k,l)\in \Lambda_+$.  

We first consider the case $k<l$. Thus,  $T=T_{k,l}\in \calH^+(\calQ_p//\calK_p, \omega_p)$ with $\supp(T_{k,l}) = \calK_pm(p^k,p^l)\calK_p$. Let $m(p^i, p^j)\in \calM_p$ for arbitrary $i,j\in \Z$ with $i \le j$ and $i+j$ a square. One can prove  (see \cite{De}, Lemma 8.24) that $m(p^i,p^j)N(\Q_p)\cap \calK_p m(p^k,p^l)\calK_p\not=\emptyset$ if and only if $i,j\ge k$ and $i+j=k+l$. Therefore,
\begin{equation}\label{eq:sup_satake}
  \supp(\calS T_{k,l}) \subset \{\calD_pm(p^\nu,p^{k+l-\nu})\calD_p\;|\; \nu=k,\dots, l\}.
\end{equation}


{\it Cartan decompositions and explicit representatives of $\Q_p/ \Z_p$:}

Let $0\not=x=p^rs$ with $s\in \Z_p^\times$. We distinguish two cases:

1. $k+l-2\nu-r\ge 0$:

First note that this inequality is always fulfilled for $\nu=k$ for all $r \le 0$.
Employing the Cartan decomposition produces for all $\nu$ 
\begin{equation}\label{eq:double_coset_decom}
  m(p^\nu,p^{k+l-\nu})n(p^rs)
=  n\_(p^{k+l-2\nu-r}s^{-1})m(p^{\nu+r},p^{k+l-\nu-r})m(s)n(-p^{-r}s^{-1})w.
\end{equation}
It follows that the matrix $m(p^\nu, p^{k+l-\nu})n(p^rs)$ lies in the double coset $\calK_p m(p^k,p^l)\calK_p$ if and only if $r=k-\nu$.
Thus, for $\nu=k+1,\dots, l$ the sum
 $\sum_{x\in \Q_p/\Z_p}T_{k,l}(m(p^\nu,p^{k+l-\nu})n(x))$
runs over all elements of the form $x=p^{k-\nu}s$, $s$ traversing the set
\[
\calU(\nu) = \left\{\sum_{i=0}^{\nu-k-1}x_ip^i\; |\; x_0\in (\Z/p\Z)^\times \text{ and } x_i \in \Z/p\Z,\; i=1,\dots, \nu-k-1\right\}.
\]
For $\nu=k$ this sum consists of a single summand corresponding to $x=0\in \Q_p/\Z_p$.

2. $k+l-2\nu-r< 0$:

We find
\begin{equation}
    m(p^\nu,p^{k+l-\nu})n(p^rs) =  n(p^{2\nu+r-k-l}s)m(p^\nu,p^{k+l-\nu}).
\end{equation}
As the latter inequality is never satisfied for $\nu = k$ for any $r<0$, $m(p^\nu, p^{k+l-\nu})n(p^rs)$ is contained in $\calK_p m(p^k,p^l)\calK_p$ if and only if $\nu = l$. The latter equation can be written as
\begin{equation}\label{eq:double_coset_decomp_2} 
m(p^l,p^k)n(p^rs) =  n(p^{l-k+r}s)w^{-1}m(p^k,p^{l})w. 
\end{equation}
As $k-l-r< 0$ is equivalent to $r > k-l$, the sum
$\sum_{x\in \Q_p/\Z_p}T_{k,l}(m(p^l,p^k)n(x))$ runs over all $x\in \Q_p/\Z_p$ with $|x|_p < l-k$. Assuming a representation of the form $x=p^rs$, we may put $r=k-l$ and write 
\[
\sum_{x\in \Q_p/\Z_p}T_{k,l}(m(p^l,p^k)n(x)) = \sum_{s\in \calU(l)^0}T_{k,l}(m(p^l,p^k)n(p^{k-l}s)),
\]
where
\[
\calU(l)^0 =   \left\{\sum_{i=1}^{l-k-1}x_ip^i\; |\;  x_i \in \Z/p\Z,\; i=1,\dots, l-k-1\right\}.
\]
Note that $|x|_p < 1$ for all $x\in \calU(l)^0$.  

Consequently, $\calU(l)\cup \calU(l)^0$ contains all principal parts $x$ in $\Q_p/\Z_p$ with $\nu_p(x) \ge k-l$. As already pointed out, these are all $x$, for which $T_{k,l}(m(p^l,p^k)n(x))$ is non-zero. 

{\it Computation of $(\calS T_{k,l})(m(p^\nu, p^{k+l-\nu}))$}:

By means of the decompositions \eqref{eq:double_coset_decom} and \eqref{eq:double_coset_decomp_2}, we are now able to compute $(\calS T_{k,l})(m(p^\nu, p^{k+l-\nu}))$ explicitly for any  $\nu\in \{k,\dots,l\}$. Since the computations for $\nu=l$ are more complicated, we treat them separately afterwards. Thus, let $\nu\in \{k,\dots,l-1\}$. Then
\begin{equation}\label{eq:img_satake}
  \begin{split}
  &  \sum_{x\in\Q_p/\Z_p}T_{k,l}\left(m(p^{\nu},p^{k+l-\nu})n(x)\right)_{|S_{L_p}^{N(\Z_p)}}\\
    &=
    \begin{cases}
      \sum_{s\in \calU(\nu)}T_{k,l}\left(m(p^{\nu}, p^{k+l-\nu})n(p^{k-\nu}s)\right)_{|S_{L_p}^{N(\Z_p)}}, & \nu\not=k,\\
      T_{k,l}(m(p^k,p^l))_{|S_{L_p}^{N(\Z_p)}}, & \nu=k
    \end{cases} \\
    &= 
\begin{cases}
  \sum_{s\in \calU(\nu)} \omega_p(n\_(p^{l-\nu}s^{-1}))\circ T_{k,l}(m(p^k,p^l))\circ\omega_p(m(s)n(-p^{\nu-k}s^{-1})w)_{|S_{L_p}^{N(\Z_p)}}, &  \nu\not=k,   \\
  T_{k,l}(m(p^k,p^l))_{|S_{L_p}^{N(\Z_p)}}, &  \nu = k. 
  \end{cases}
  \end{split}
  \end{equation}

Since the level of $D_p$ is $p$, 
the last expression in \eqref{eq:img_satake}  simplifies to
\begin{align*}
\sum_{s\in U(\nu)}T_{k,l}(m(p^k, p^l))\circ\omega_p\left(m(s)w\right)_{|S_{L_p}^{N(\Z_p)}}
\end{align*} 

With the help of the explicit formulas \eqref{eq:weil_rep_explicit} of $\omega_p$  and Lemma \ref{lem:struct_hecke_double_coset_1}, we  obtain
\begin{align*}
  & (\calS T_{k,l})(m(p^\nu, p^{k+l-\nu}))\varphi_p^{(0)} \\
  &= \delta(m(p^\nu,p^{k+l-\nu}))^{1/2}\frac{\gamma_p(D_p)}{|D_p|^{1/2}}\sum_{\gamma\in L_p'/L_p}\sum_{s\in \calU(\nu)}\leg{s}{|D_p|}T_{k,l}(m(p^k, p^l))\varphi_p^{(s^{-1}\gamma)}\\
  &=
  \begin{cases}
    0, & \text{ if } |D_p|=p,\\
   \delta(m(p^\nu,p^{k+l-\nu}))^{1/2}\gamma_p(D_p)|D_p|^{1/2}|\calU(\nu)|\varphi_p^{(0)}, & \text{ if } |D_p|=p^2.
    \end{cases}\\
\end{align*}
In view of the discussion above, for  $\nu=l$ we have
\begin{equation}\label{eq:T_k_l_m_p_l_k}
  \begin{split}
  &  \sum_{x\in \Q_p/\Z_p}T_{k,l}(m(p^l,p^k)n(x))\varphi_p^{(0)} =\\
    & \sum_{s\in\calU(l)}\omega_p(n\_(s^{-1}))T(m(p^k,p^l))\omega_p(m(s)w)\varphi_p^{(0)} + \sum_{s\in \calU(l)^0}\omega_p(w^{-1})T_{k,l}(m(p^k,p^l))\omega_p(w)\varphi_p^{(0)}.
    \end{split}
  \end{equation}

With the help of Lemma \ref{lem:local_weil_lower_triangular} and the calculations before, it can be verified that the first summand of the above expression is equal to 
\begin{equation}\label{eq:T_k_l_r_k_l}
  \begin{split}
  &  \leg{-1}{|D_p|}\gamma_p(D_p)^2\sum_{s\in \calU(l)}\sum_{\nu_p\in D_p}\psi_p(sq(\nu_p))\varphi_p^{(\nu_p)} \\
    &= \sum_{\nu_p\in D_p}\left(|\calU(l)^0|\sum_{x_0\in (\Z/p\Z)^\times}e(x_0q(\nu_p))\right)\varphi_p^{(\nu_p)},
    \end{split}
  \end{equation}
where we exploited for the last equation the fact that level of $L_p$ is $p$ and that $\gamma_p(D_p)^2 = \leg{-1}{|D_p|}$ (see e. g. \cite{Ze}, p. 73).
Similarly, the second summand can be evaluated to be
\begin{equation}\label{eq:T_k_l_r_smaller}
 \calU(l)^0|\sum_{\nu_p\in D_p}\varphi_p^{(\nu_p)}.
\end{equation}
Replacing the right-hand side of \eqref{eq:T_k_l_m_p_l_k} with \eqref{eq:T_k_l_r_k_l} and \eqref{eq:T_k_l_r_smaller}, yields
\begin{align*}
  &  \sum_{x\in \Q_p/\Z_p}T_{k,l}(m(p^l,p^k)n(x))\varphi_p^{(0)}\\
  &=|\calU(l)^0|\sum_{\nu_p\in D_p}\left(\sum_{x\in \Z/p\Z}e(xq(\nu_p))\right)\varphi_p^{(\nu_p)}\\
  &= |\calU(l)^0|p\varphi_p^{(0)}.
  \end{align*}
Here we have used the standard formula for the Gauss sum $\sum_{x\in \Z/p\Z}e(xq(\nu_p))$. 
  This leads us finally to
  \begin{equation}\label{eq:image_satake_map}
    \begin{split}
&  (\calS T_{k,l})(m(t_1,t_2)) =\delta(m(t_1,t_2))^{1/2}\times \\
&  \begin{cases}
    \mathbbm{1}_{\calD_pm(p^k,p^l)\calD_p}\id_{S_{L_p}^{N(\Z_p)}}, & m(t_1, t_2) = m(p^k, p^l),\\
    \delta_p\gamma_p(D_p)|D_p|^{1/2}|\calU(\nu)|\mathbbm{1}_{\calD_pm(p^\nu,p^{k+l-\nu})\calD_p}\id_{S_{L_p}^{N(\Z_p)}}, & m(t_1,t_2) = m(p^\nu, p^{k+l-\nu}), \nu\not=k,l \\
     p^{l-k}\mathbbm{1}_{\calD_pm(p^l,p^{k})\calD_p}\id_{S_{L_p}^{N(\Z_p)}}, & m(t_1,t_2) = m(p^l,p^k),\\
    {\bf 0}, & \text{ otherwise,}
   \end{cases} \\
      &    = p^{\frac{1}{2}(l-k)}\times \\
      &\begin{cases}
         \mathbbm{1}_{\calD_pm(p^k,p^l)\calD_p}\id_{S_{L_p}^{N(\Z_p)}}, & m(t_1, t_2) = m(p^k, p^l),\\
         \delta_p\gamma_p(D_p)\mathbbm{1}_{\calD_pm(p^\nu,p^{k+l-\nu})\calD_p}\id_{S_{L_p}^{N(\Z_p)}}, & m(t_1,t_2) = m(p^\nu, p^{k+l-\nu}), \nu\not=k,l \\
         \mathbbm{1}_{\calD_pm(p^l,p^{k})\calD_p}\id_{S_{L_p}^{N(\Z_p)}}, & m(t_1,t_2) = m(p^l,p^k),\\
    {\bf 0}, & \text{ otherwise,}
         \end{cases}
   \end{split}   
\end{equation}
  where $\delta_p=1$ if $|L_p'/L_p|=p^2$ and zero otherwise. 

If $k=l$, it follows from \eqref{eq:sup_satake} that $\supp(\calS T_k) =\calD_pm(p^k,p^{k})\calD_p$. The same thoughts as for $k< l$ after equation \eqref{eq:double_coset_decomp_2} yield
\begin{align*}
  (\calS T_{k}(m(p^k,p^k)) &= T_{k}(m(p^k,p^k))_{|S_{L_p}^{N(\Z_p)}}\\
  &=\mathbbm{1}_{\calD_pm(p^k,p^k)\calD_p}\id_{S_{L_p}^{N(\Z_p)}}.
\end{align*}
From the above  follows immediately that $\calS$ is injective. For the surjectivity it suffices to proof that $\tau_k$ and $\tau_{k,l}$ are contained in the image of $\calS$. This can be done almost verbatim as in \cite{De}, p. 212. 
\end{proof}

 {\it Whenever $p$ divides $|D|$} and we deal with Elements $T_{k,l}$, $T_k$ or $\tau_{k,l}$, $\tau_k$  of either of the Hecke algebras $\calH^+(\calQ_p//\calK_p,\omega_p)$ or $\calH(\calM_p//\calD_p, \omega_{p_{|S_{L_p}^{N(\Z_p)}}})^W$, we mean the above stated and assume that $D_p$ is {\it anisotropic.} 

\subsection{The case of primes $p$ not dividing $|D|$}
We denote with $\calH(\calM_p//\calD_p)^W$  the subalgebra of all elements of $\calH(\calM_p//\calD_p)$ invariant under the Weyl group $W$. In this case it easily seen that $\calH(\calQ_p//\calK_p, \omega_p)$ is isomorphic to $\calH(\calM_p//\calD_p)^W$.

\begin{theorem}\label{thm:satake_isom_coprime}
  Let $p$ be a prime coprime to $|D|$. Then the Hecke algebras $\calH(\calQ_p//\calK_p, \omega_p)$ and $\calH(\calM_p//\calD_p)^W$ are isomorphic as algebras. 
  \end{theorem}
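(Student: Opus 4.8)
The plan is to reduce the statement to the classical Satake isomorphism for the pair $(\calQ_p, \calK_p)$, exploiting the fact that the Weil representation degenerates completely at primes not dividing $|D|$. First I would record that for $p \nmid |D|$ the space $S_{L_p} = \C \varphi_p^{(0)}$ is one-dimensional by \eqref{eq:local_S_L}, and the local representation $\omega_p$ is trivial (as noted in the enumeration after \eqref{eq:local_hilbert_symbol}). Consequently $\End(S_{L_p}) \cong \C$, and the equivariance condition $f(k_1 g k_2) = \omega_p(k_1)\circ f(g) \circ \omega_p(k_2)$ defining $\calH(\calQ_p//\calK_p, \omega_p)$ collapses to ordinary bi-invariance $f(k_1 g k_2) = f(g)$. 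This identifies $\calH(\calQ_p//\calK_p, \omega_p)$ with the scalar-valued Hecke algebra $\calH(\calQ_p//\calK_p)$, so it suffices to produce an algebra isomorphism $\calH(\calQ_p//\calK_p) \cong \calH(\calM_p//\calD_p)^W$.

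Second, I would use the scalar specialization of the Satake map \eqref{def:satake_map},
\[
\calS: \calH(\calQ_p//\calK_p) \to \calH(\calM_p//\calD_p), \qquad T \mapsto \left( m \mapsto \delta(m)^{1/2} \sum_{n \in N(\Q_p)/N(\Z_p)} T(mn) \right).
\]
Since $\omega_p$ is trivial, the verification that $\calS$ is a well-defined $\C$-algebra homomorphism is exactly the argument indicated in Remark \ref{rem:satake_map}, with every occurrence of $\omega_p$ replaced by the identity; equivalently one may cite the classical treatment in \cite{De}, Chapter 8, which applies once the Cartan and Iwasawa decompositions (Lemmas \ref{lem:cartan_decomp} and \ref{lem:iwasaw_decomp}) are in hand. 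The normalizing factor $\delta(m)^{1/2}$ is precisely what forces the image to be invariant under the Weyl group $W$ (cf. Remark \ref{rem:satake_map} i)), so $\calS$ lands in $\calH(\calM_p//\calD_p)^W$.

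Third, I would prove that $\calS$ is bijective onto $\calH(\calM_p//\calD_p)^W$. By the Cartan decomposition (Lemma \ref{lem:cartan_decomp}) a basis of $\calH(\calQ_p//\calK_p)$ is given by the characteristic functions $T_{k,l} = \mathbbm{1}_{\calK_p m(p^k,p^l) \calK_p}$ for $k \le l$ with $k+l$ even, while $\calH(\calM_p//\calD_p)^W$ has as basis the Weyl-symmetrized functions $\mathbbm{1}_{\calD_p m(p^k,p^l)\calD_p} + \mathbbm{1}_{\calD_p m(p^l,p^k)\calD_p}$. Repeating the double-coset computation from the proof of Theorem \ref{thm:satake_isom}, but now with all Gauss-sum and Weil-index factors equal to $1$, yields that $\calS(T_{k,l})$ equals this symmetrized function plus a combination of terms supported on the strictly smaller double cosets $\calD_p m(p^\nu, p^{k+l-\nu}) \calD_p$ with $k < \nu < l$. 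This triangularity with respect to the natural partial order on the indexing pairs gives injectivity at once and permits an inductive construction of preimages, establishing surjectivity exactly as in \cite{De}, p.~212; a bijective algebra homomorphism is the desired isomorphism.

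The main obstacle, such as it is, lies in this last step: one must confirm that the combinatorics of the $\calK_p$-double cosets of $m(p^\nu, p^{k+l-\nu})n(x)$ produce genuinely lower-order support, so that the resulting triangular system can be inverted. Because $\omega_p$ is trivial here, this is literally the classical $\GL_2$ computation, and the only genuine point to check is that the decomposition lemmas for $(\calQ_p, \calK_p)$—which share the same combinatorial shape as those for $(\GL_2(\Q_p), \GL_2(\Z_p))$—suffice to run the standard argument, which they do.
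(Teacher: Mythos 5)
Your proposal is correct and follows essentially the same route as the paper: reduce to the scalar-valued algebra $\calH(\calQ_p//\calK_p)$ using that $S_{L_p}=\C\varphi_p^{(0)}$ is one-dimensional and $\omega_p$ is trivial for $p\nmid|D|$, then apply the classical Satake isomorphism onto $\calH(\calM_p//\calD_p)^W$. The only difference is that the paper simply cites the classical Satake isomorphism (via \cite{De}, \cite{Ca}) at this point, whereas you spell out the well-definedness, Weyl-invariance, and triangularity arguments that underlie it.
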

\begin{proof}
  By Lemma 3.4 in \cite{St}, we know that $L_p$ is unimodular and $\omega_p$ is the trivial representation on the space $S_{L_p} = \C\varphi_p^{(0)}$. A basis of $\calH(\calQ_p//\calK_p, \omega_p)$ is then given by
\[
\left\{\mathbbm{1}_{\calK_pm(p^k, p^l)\calK_p}\cdot\id_{S_{L_p}}\; |\; (k,l)\in \Lambda_+\right\}.
\]
The composition of
\[
F: \calH(\calQ_p//\calK_p,\omega_p)\rightarrow \calH(\calQ_p//\calK_p),\quad \mathbbm{1}_{\calK_pm(p^k, p^l)\calK_p}\cdot\id_{S_{L_p}}\mapsto \mathbbm{1}_{\calK_pm(p^k, p^l)\calK_p}
\]
with the classical Satake map (see e. g. \cite{De} or \cite{Ca}) gives the desired isomorphism.
  \end{proof}

\section{Vector valued automorphic forms and vector valued modular forms}\label{sec:automorphic_forms}
In his thesis \cite{We}, Werner introduced a generalized version of the Hecke operators $T(m^2)^*$  defined in \cite{BS}. Most notably, an extension of the Weil representation from $\SL_2(\Z/N\Z)$ to $\GL_2(\Z/N\Z)$  ($N$ is the level of $L$) was specified without the condition that the parameter $m$ has to be a square. Werner also  laid the foundation of an adelic description of these generalized  Hecke operators.  In particular, he assigned to each vector valued modular form a vector valued automorphic form on $\GL_2(\A)$. In this section we continue this work and embed it into a  more general framework of vector valued automorphic forms. However, we stick to the Hecke operators given in \cite{BS}. As a consequence, we have to work with the extension of the Weil representation as given in \cite{St2}, Section 4,  Section \ref{sec:preliminaries} and its adelic counterpart in Section \ref{subsec:weil_repr_adelic} of the present paper.

Instead of working with $\GL_2(\A)$, we consider the restricted product
\begin{equation}\label{def:modified_adeles}
\calG(\A) =\sideset{}{'} \prod_{p\le \infty}\calQ_p = \left\{(g_p)\in \prod_{p\le \infty}\calQ_p\; |\; g_p\in \calK_p \text{ for almost all primes } p\right\},
\end{equation}
where
\[
\calQ_\infty = \{M\in \GL_2(\R)\; |\; \det(M) \in (\R^\times)^2\}. 
\]
Note that $\calK_\infty = \SO(2)$ is a subgroup of $\calQ_\infty$. The group
$\calG(\Q)$ 
can be embedded diagonally  as a discrete subgroup of $\calG(\A)$.
An important decomposition for $\GL_2(\A)$, which will be needed for the definition of automorphic forms, is the strong approximation. An analogous result holds for $\calG(\A)$.
\begin{theorem}\label{thm:strong_approximation}
  Let $\calK = \prod_{p<\infty}\calK_p\subset \GL_2(\A_f)$. Then
  \begin{equation}\label{eq:strong_approx}
    \calG(\A) = \calG(\Q)(\calQ_\infty \times \calK). 
  \end{equation}
  More generally, let $\calU = \prod_{p<\infty}\calU_p$ be any open compact subgroup of $\calK$ with the property that $\det(\calU) = (\widehat{\Z}^\times)^2$. Then
  \begin{equation}\label{eq:strong_approx_subgroups}
    \begin{split}
      &\calG(\A_f) = \calG(\Q)\cdot \calU \text{ and }\\
      &\calG(\A) = \calG(\Q)(\calQ_\infty \times \calU).
      \end{split}
    \end{equation}
\end{theorem}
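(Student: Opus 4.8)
The plan is to reduce the statement to strong approximation for $\SL_2$, the determinant being the only feature that keeps $\calG$ from being semisimple. Two inputs suffice. The first is the determinant lemma for $\Q$: since $\Q$ has class number one, every finite idele splits as $\A_f^\times=\Q_{>0}^\times\times\widehat{\Z}^\times$, the rational factor of $x=(x_p)$ being $\prod_p p^{\ord_p(x_p)}$. The second is classical strong approximation for $\SL_2$: as $\SL_2$ is simply connected and $\SL_2(\R)$ is noncompact, $\SL_2(\Q)$ is dense in $\SL_2(\A_f)$, whence $\SL_2(\A_f)=\SL_2(\Q)\cdot V$ for every open subgroup $V\subseteq\SL_2(\A_f)$.

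I would first prove the finite identity $\calG(\A_f)=\calG(\Q)\cdot\calU$, from which the two displayed formulas follow. Fix $g_f\in\calG(\A_f)$ and put $d=\det(g_f)$; by definition $d_p\in(\Q_p^\times)^2$ for every $p$. Writing $d=q\,u$ with $q\in\Q_{>0}^\times$ and $u\in\widehat{\Z}^\times$ as above, the evenness of each $\ord_p(d_p)$ forces $\ord_p(q)$ to be even for all $p$, so $q\in(\Q^\times)^2$; then $u_p=d_p/q\in(\Z_p^\times)^2$ for each $p$, i.e. $u\in(\widehat{\Z}^\times)^2$. Now set $\gamma_0=\diag(q,1)\in\calG(\Q)$ (its determinant $q$ is a square), and use the hypothesis $\det(\calU)=(\widehat{\Z}^\times)^2$ to choose $u_e\in\calU$ with $\det(u_e)=u$. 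Then $\det(\gamma_0^{-1}g_fu_e^{-1})=q^{-1}\cdot qu\cdot u^{-1}=1$, so $\gamma_0^{-1}g_fu_e^{-1}\in\SL_2(\A_f)$. Applying $\SL_2$ strong approximation to the open subgroup $V=\calU\cap\SL_2(\A_f)=\prod_p(\calU_p\cap\SL_2(\Z_p))$ --- which is open because each $\calU_p$ is open in $\calK_p$ and $\calU_p=\calK_p$ for almost all $p$ (as $\calU$ is open in the compact group $\calK$) --- gives $\gamma_0^{-1}g_fu_e^{-1}=\gamma_1v$ with $\gamma_1\in\SL_2(\Q)$ and $v\in V$. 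Hence $g_f=(\gamma_0\gamma_1)(v\,u_e)$ with $\gamma_0\gamma_1\in\calG(\Q)$ and $v\,u_e\in\calU$, proving $\calG(\A_f)=\calG(\Q)\cdot\calU$.

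The remaining statements follow formally. For $\calG(\A)=\calG(\Q)(\calQ_\infty\times\calU)$, write $g=(g_\infty,g_f)$ with $g_\infty\in\calQ_\infty$, factor $g_f=\gamma u$ with $\gamma\in\calG(\Q)$ and $u\in\calU$, and observe that under the diagonal embedding $\gamma$ has positive determinant, so $\gamma\in\calQ_\infty$ and $\gamma^{-1}g_\infty\in\calQ_\infty$; thus $g=\gamma\,(\gamma^{-1}g_\infty,\,u)\in\calG(\Q)(\calQ_\infty\times\calU)$. The first displayed formula \eqref{eq:strong_approx} is the special case $\calU=\calK$, which satisfies the hypothesis since $\det(\calK)=\prod_p(\Z_p^\times)^2=(\widehat{\Z}^\times)^2$.

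The main obstacle is precisely the determinant bookkeeping: because $\calG$ is an extension of a ``square-determinant'' group by $\SL_2$, strong approximation cannot be applied directly, and the argument hinges on splitting the everywhere-locally-square idele $d$ as a rational square times a unit-square idele and then realizing the unit-square part as an actual determinant of an element of $\calU$. This last realization is exactly what the hypothesis $\det(\calU)=(\widehat{\Z}^\times)^2$ guarantees; were $\det(\calU)$ any smaller, one could not match determinants so as to land in $\SL_2(\A_f)$, and the reduction would fail. Everything else is routine group bookkeeping using that $\calG(\Q)$, $\calU$, and $\calQ_\infty\times\calU$ are groups.
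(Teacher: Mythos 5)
Your proof is correct and follows essentially the same route as the paper, which simply asserts that the $\GL_2$ arguments of Knightly--Li (idele decomposition plus $\SL_2$ strong approximation) carry over to $\calG$. You have in fact written out the one point where "carrying over" requires a genuine check --- that an idele lying in $(\Q_p^\times)^2$ at every place factors as a global rational square times an element of $(\widehat{\Z}^\times)^2$, which the hypothesis $\det(\calU)=(\widehat{\Z}^\times)^2$ then absorbs --- so your version is a complete account of what the paper leaves to the reader.
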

\begin{proof}
  A proof for the classical result for $\GL_2(\A)$ can be found in many places among them in \cite{KL}, Section 5.2 and Section 6.3. One can check that the the proofs of Proposition 5.10, Proposition 6.5 and Theorem 6.8 of \cite{KL} carry over to the analogous statements in our setting.  
\end{proof}

In \cite{KL} and \cite{Ge} functions $f:\GL_2(\Q)\setminus \GL_2(\A)\rightarrow \C$ with certain properties were related to (scalar valued) elliptic modular forms. Here  we consider $\calG(\Q)$-invariant and  $S_L$-valued functions
\[
F: \calG(\Q)\setminus \calG(\A)\rightarrow S_L
\]
with a similar goal.
With respect to the basis $\{\varphi_\mu\}_{\mu\in D}$ of $S_L$ such a function can be written in the form $F = \sum_{\mu\in D}F_\mu\varphi_\mu$. In view of \eqref{eq:local_decomp_S_L} and \eqref{eq:local_S_L},   we will consider only {\it factorizable  functions}, that is, only those $S_L$-valued functions $F$, which possess a decomposition of the form 
\[
F(\gamma(g_\infty\times g_f)) = \bigotimes_{p<\infty} F_p(g_\infty,g_p),
\]
where
\[
F_p(g_\infty, g_p) =
\begin{cases}
  \sum_{\lambda\in L_p'/L_p}F_{\lambda,\infty}(g_\infty)F_{\lambda,p}(g_p)\varphi_p^{(\lambda)}, & p\mid |D|,\\
  \varphi_p^{(0)}, & p\nmid |D|.
  \end{cases}
\]
Using the bilinearity of the tensor product, we have
\[
F(g) = \sum_{(\lambda_p)_{p} \in \bigoplus_{p<\infty} D_p}F_{\lambda_p\infty}(g_\infty)\prod_{p<\infty} F_{\lambda_p,p}(g_p)\bigotimes_{p<\infty}\varphi_p^{(\lambda_p)}.
\]
Note that $F$ is well defined since any occurring sum, product or tensor product is finite.

We denote the subspace of all these functions $F: \calG(\Q)\setminus \calG(\A)\rightarrow S_L$ with $\calF_L$.
Associated to the Weil representation $\omega_f$ of $\calG(\A_f)$ on the space $S_L$ and analogous to the corresponding scalar valued space in \cite{KL}, we define
\begin{equation}\label{der:L_2_spaces}
  \begin{split}
 &   L^2(\calG(\Q)\setminus \calG(\A), \omega_f) = \left\{F\in \calF_L\; \left|
    \begin{array}{ll}
      \text{i)} &  F_\mu \text{ is measurable } \text{ for all } \mu \in D\\
      \text{ii)} & F(zg) = \omega_f(z_f)^{-1}F(g) \text{ for all }\\
      & z=z_\Q(z_\infty\times z_f)\in \calZ(\A)\\
      \text{iii)} & \int_{\overline{\calG}(\Q)\setminus \overline{\calG}(\A)}\| F(g)\|^2 dg <\infty
      \end{array}
    \right.\right\}\\
   & \text{ and }\\
    & L^2_0(\omega_f) \\
    &= \left\{F\in L^2(\calG(\Q)\setminus \calG(\A), \omega_f)\; \left|\; \int_{N(\Q)\setminus N(\A)}F_\mu(ng)dn = 0 \text{ for all } \mu \in D,   \text{  a. e. } g\in \calG(\A)\right.\right\}.
\end{split}
\end{equation}
Here by
\begin{enumerate}
  \item[i)]
    $\|F(g)\|^2$ we mean $\langle F(g), F(g)\rangle$ as defined in \eqref{def:L_2_S_L},
\item[ii)]
  $\overline{\calG}(R) = \calZ(R)\setminus \calG(R)$, where $\calZ(R)$ is the center of $\calG(R)$ and $R$ stands for any commutative ring with $1$,
\item[iii)]
  $dg$ and $dn$ we mean the Haar measure on $\overline{\calG}(\Q)\setminus \overline{\calG}(\A)$ and $N(\Q)\setminus N(\A)$, respectively.
  \end{enumerate}

Measurability for each component function $F_\mu$  is meant in the sense of Proposition 7.15 of \cite{KL}: $F_{\mu}$ can be written as a product $\prod_{p\le \infty}F_{\mu,p}(g_p)$, each component satisfying:
\begin{enumerate}
\item[i)]
  $F_{\mu,p}:\calQ_p\rightarrow \C$ is measurable for all $p\le \infty$
\item[ii)]
$F_{{\mu,p}_{|_{\calK_p}}} = 1$  for all $p\notin S,$ where $S$ is a finite set of places. 
\end{enumerate}

The above integrals over $\overline{\calG}(\Q)\setminus \overline{\calG}(\A)$ and $N(\Q)\setminus N(\A)$  are explained in \cite{KL}, Proposition 7.43 and Proposition 12.2, and  meant in the very same way.
Also note that the integral in iii) of $L^2(\calG(\Q)\setminus \calG(\A), \omega_f)$ is well defined as $F$ satisfies ii) and the Weil representation $\omega_f$ is unitary with respect to $\langle \cdot, \cdot\rangle$.

Werner assigned in \cite{We}, Def. 49,  a $\C[D]$-valued Function $F_f$ on $\calG(\Q)\setminus\calG(\A)$ to a cusp form $f\in S_\kappa(\rho_L)$. We adopt his definition to our setting, which basically means  that we replace the group ring with the isomorphic space $S_L$.

\begin{definition}\label{def:mod_adelic}
  Let $f\in S_\kappa(\rho_L)$ and $g\in \calG(\A)$ with $g=\gamma(g_\infty \times k)$, where  $\gamma\in \calG(\Q), g_\infty\in \calQ_\infty$ and $k\in \calK$. Then in terms of this decomposition we define a map $\mathscr{A}$
\begin{equation}\label{eq:mod_adelic}
f\mapsto \mathscr{A}(f)=F_f\;\text{ with } F_f(g)= \omega_f(k)^{-1}j(g_\infty,i)^{-\kappa}f(g_\infty i).
\end{equation}
\end{definition}

Lemma 50 in \cite{We} shows that the definition of $F_f$ in \eqref{eq:mod_adelic} is independent of the decomposition of $g$. Moreover, from its definition it follows immediately that $F_f$ is $\calG(\Q)$-invariant.  

\begin{proposition}\label{rem:correspondence_L2}
  Let $f\in S_\kappa(\rho_L)$. Then the assigned function $F_f$ on $\calG(\Q)\setminus\calG(\A)$ lies in the space $L^2(\calG(\Q)\setminus\calG(\A), \omega_f)$. 
\end{proposition}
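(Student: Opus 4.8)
The plan is to check that $F_f$ lies in $\calF_L$ and then verify the three defining conditions (i)--(iii) of $L^2(\calG(\Q)\setminus\calG(\A),\omega_f)$, taking the well-definedness (Lemma 50 of \cite{We}) and the $\calG(\Q)$-invariance of $F_f$ as already established. First I would note that $F_f\in\calF_L$ is immediate from \eqref{eq:mod_adelic}: writing $f(g_\infty i)=\sum_{\lambda\in D}f_\lambda(g_\infty i)\frake_\lambda$ and using the identification $\frake_\lambda\leftrightarrow\varphi_\lambda=\bigotimes_{p<\infty}\varphi_p^{(\lambda_p)}$ together with $\omega_f(k)^{-1}=\bigotimes_{p<\infty}\omega_p(k_p)^{-1}$, the function $F_f$ factors as a tensor product of local components of exactly the prescribed shape, the factor at each $p\nmid|D|$ being $\varphi_p^{(0)}$. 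Condition (i) would then follow routinely, since $f$ is holomorphic (hence continuous), $j(g_\infty,i)^{-\kappa}$ is continuous on $\calQ_\infty$, and $\omega_f$ is locally constant on $\calK$ (it factors through the finite quotient $Q(N)$ by Definition \ref{def:ext_adelic_weil_repr_coprime}); thus each component of $F_f$ is continuous, in particular measurable, and is trivial on $\calK_p$ for almost all $p$.

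For condition (ii), I would take $z=z_\Q(z_\infty\times z_f)\in\calZ(\A)$ in the normalised decomposition with $z_\Q\in\calZ(\Q)$, $z_\infty$ a positive scalar matrix and $z_f\in\calZ(\A_f)\cap\calK$, which exists because $\A^\times=\Q^\times\cdot(\R_{>0}\times\widehat\Z^\times)$. Writing $g=\gamma(g_\infty\times k)$ and using that central elements commute with $\gamma$, one obtains the admissible decomposition $zg=(z_\Q\gamma)\bigl((z_\infty g_\infty)\times(z_f k)\bigr)$ with $z_\Q\gamma\in\calG(\Q)$ and $z_fk\in\calK$. Since $z_\infty$ acts trivially as a Möbius transformation, $z_\infty g_\infty i=g_\infty i$, and the cocycle relation for $j$ with $j(z_\infty,\cdot)=1$ gives $j(z_\infty g_\infty,i)=j(g_\infty,i)$; as $\omega_f(z_f)$ acts on $S_L$ by the scalar furnished by \eqref{eq:weil_repr_scalar_matrix}, it commutes with $\omega_f(k)^{-1}$, and I would conclude $F_f(zg)=\omega_f(z_f)^{-1}F_f(g)$.

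The substantial point is condition (iii), and the hard part will be reducing the adelic integral to the classical Petersson norm. I would first record that $F_f(gk')=\omega_f(k')^{-1}F_f(g)$ for $k'\in\calK$, so that $\|F_f\|^2$ is right $\calK$-invariant because $\omega_f$ is unitary for $\langle\cdot,\cdot\rangle$; the analogous computation with $r\in\calK_\infty=\SO(2)$ (using $ri=i$ and $j(r,i)=e^{-i\theta}$, which alters $F_f$ only by a phase) shows $\|F_f\|^2$ is also right $\SO(2)$-invariant. By strong approximation (Theorem \ref{thm:strong_approximation}) together with the identity $\calG(\Q)\cap(\calQ_\infty\times\calK)=\SL_2(\Z)=:\Gamma$, the quotient $\overline{\calG}(\Q)\setminus\overline{\calG}(\A)$ is identified with $\overline\Gamma\setminus(\overline{\calQ_\infty}\times\calK)$; the two invariances then collapse the integral, up to the finite factor $\vol(\calK)\vol(\SO(2))$, to one over $\PSL_2(\Z)\setminus\H$. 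Parametrising by the Iwasawa representative $g_\tau=n(x)m(y^{1/2})$ with $\tau=x+iy$, I would compute $j(g_\tau,i)=y^{-1/2}$, hence $\|F_f(g_\tau\times1)\|^2=(\im\tau)^\kappa\|f(\tau)\|^2$, while $dg_\infty$ becomes the hyperbolic measure $y^{-2}\,dx\,dy$. The integral therefore equals a positive multiple of $\int_{\Gamma\setminus\H}\langle f(\tau),f(\tau)\rangle(\im\tau)^\kappa\,d\mu(\tau)=(f,f)$, the Petersson norm \eqref{eq:petersson_scalar}, which is finite because $f$ is a cusp form and decays rapidly at the cusp. The main obstacle throughout is this last reduction: setting up the dictionary between the adelic quotient and $\Gamma\setminus\H$ precisely, tracking the measure normalisations, and invoking cuspidality for convergence.
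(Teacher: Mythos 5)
Your proposal is correct and follows essentially the same route as the paper: verifying measurability via the fact that $\omega_f$ factors through a finite quotient of $\calK$, noting the central transformation law directly from the definition, and reducing the adelic $L^2$-integral by strong approximation and unitarity of $\omega_f$ to the Petersson norm of $f$ on $\Gamma(1)\backslash\H$. The only differences are presentational — you make explicit the factorizability ($F_f\in\calF_L$) and the idele decomposition behind condition (ii), where the paper simply cites \cite{KL} and declares these steps immediate.
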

\begin{proof}
  \begin{enumerate}
  \item[i)]
    By definition the $\mu$-th component of $F_f$ is given by
    \begin{equation}\label{eq:component_automorphic_form}
      \begin{split}
      (F_f)_\mu(g)&= \langle \omega_f(k)^{-1}j(g_\infty,i)^{-\kappa}f(g_\infty i), \varphi^{(\mu)}\rangle\\
      &=j(g_\infty, i)^{-\kappa}\sum_{\lambda\in D}f_\lambda(g_\infty i)\prod_{p<\infty} \langle\omega_p^{-1}(k_p)\varphi_p^{(\lambda_p)},\varphi_p^{(\mu_p)}\rangle,
        \end{split}
      \end{equation}
    where $g=\gamma(g_\infty \times k)$.  It is well known  that  $j(g_\infty, i)^{-\kappa}f_\lambda(g_\infty i)$ is  measurable on $\calQ_\infty$ as $f_\lambda$ is a scalar valued cusp form for $\Gamma(N)$ (cf. \cite{Ge}, $\S 2$, for this case). 
    As a result of the discussion in Chapter \ref{subsec:weil_repr_adelic}, we have that $\omega_p$ is trivial for all $p\nmid N$. For $p\mid N$ we find by means of the explicit formulas of $\omega_p$ (see \eqref{eq:weil_rep_explicit} or  \cite{BY}, p. 645,  or \cite{St}, Lemma 3.4)   that $\omega_p$ is trivial on  the subgroup
    \[
    \calK_p(p^{\ord_p(D)}) = \left\{\kzxz{a}{b}{c}{d}\in \calK_p\; |\; \kzxz{a}{b}{c}{d}\equiv \kzxz{1}{0}{0}{1}\bmod{p^{\ord_p(D)}}\Z_p\right\}
      \]
      and factors thereby through  $\calK_p/\calK_p(p^{\ord_p(D)})$ for each $p$ dividing $N$.
      Since \newline $\calK_p(p^{\ord_p(D)})$ has as compact subgroup a finite measure,
       $\langle\omega_p^{-1}(k_p)\varphi_p^{\mu_p},\varphi_p^{\mu_p}\rangle$ is a measurable function for all primes $p$. We then obtain that $(F_f)_\mu$ is measurable in the above stated sense. 
  \item[ii)]
    Let $z = z_{\Q}(z_\infty\times z_f)\in \calZ(\A)$. Then it follows immediately from the definition of $F_f$ that $F_f(zg) = \omega_f(z_f)^{-1}F_f(g)$.
  \item[iii)]
    It can be verified that Proposition 7.43 and the discussion before of \cite{KL} is also valid in our situation. We have to check that all steps of the proof are still working if we replace the involved groups by the corresponding groups in our setting. This is  in fact the case, some steps are even easier since we only have to deal with  matrices whose determinant is a square. As a result, we may replace the integral over $\overline{\calG}(\Q)\setminus \overline{\calG}(\A)$ with the corresponding integral over $D\calK_\infty \times \calK$. Here $D$ is a fundamental domain for $\Gamma(1)\setminus \H$ interpreted as subset of $\SL_2(\R)$. Following the proof of Proposition 12.15 in \cite{KL},  we find for $F_f$
    \begin{equation}\label{eq:adelic_integrals_calc}
      \begin{split}
      \int_{\overline{\calG}(\Q)\setminus \overline{\calG}(\A)}\| F_f(g)\|^2 dg &= \int_{D\calK_\infty}\int_{\calK}\|F_f(g\times k)\|^2dkdg\\
      &=\int_{D}\|j(g_\infty,i)^{-\kappa}f(g_\infty i)\|^2dg,
      \end{split}
      \end{equation}
    where we have used that $\omega_f$ is unitary with respect to $\langle \cdot, \cdot\rangle$ and that the Haar measure on $\calQ_p$ is normalized to be equal to one on $\calK_p$ for $p\le \infty$. If we identify $g_\infty i$ with an element $\tau\in \Gamma(1)\setminus \H$, the last integral in \eqref{eq:adelic_integrals_calc} becomes
    \[
    \int_{\Gamma(1)\setminus \H}\|f(\tau)\|^2\im(\tau)^\kappa\frac{dxdy}{y^2},
    \]
    which is the Petersson norm of $f\in S_\kappa(\rho_L)$ and therefore $<\infty$. Thus, the $L^2$-norm of $F_f$ is finite. 
  \end{enumerate}
\end{proof}

\begin{lemma}\label{lem:cuspidal}
  Let $f\in S_\kappa(\rho_L)$ and $F_f$ the assigned automorphic form given by \eqref{eq:mod_adelic}. Then
  \[
  \int_{N(\Q)\setminus N(\A)}F_\mu(ng)dn = 0
  \]
  for almost every $g\in \calG(\A)$ and all $\mu\in D$. 
\end{lemma}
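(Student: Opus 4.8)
The plan is to show that the adelic constant term of $F_f$ along $N$ reduces to the zeroth Fourier coefficient of $f$, which vanishes precisely because $f$ is a cusp form. First I would record that $N(\Q)\bs N(\A)\cong\Q\bs\A$ is a compact abelian group, so the integral is well defined: since $F_f$ is $\calG(\Q)$-invariant it is in particular left $N(\Q)$-invariant, and the integrand descends to the compact quotient. I normalise $\vol(N(\Q)\bs N(\A))=1$ and use the decomposition $\A=\Q+(\R\times\hat{\Z})$ with $\Q\cap(\R\times\hat{\Z})=\Z$ (embedded diagonally) to identify a fundamental domain with $[0,1)\times\hat{\Z}$, carrying the product of Lebesgue measure on $[0,1)$ and Haar measure on $\hat{\Z}$.

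Next I would reduce the claim to $g$ of the form $g_\infty\times k$ with $g_\infty\in\calQ_\infty$, $k\in\calK$. Writing $\phi_\mu(g):=\int_{N(\Q)\bs N(\A)}(F_f)_\mu(ng)\,dn$, right translation on the compact quotient shows $\phi_\mu$ is left $N(\A)$-invariant, and conjugating by $m\in M(\Q)$ shows $\phi_\mu(mg)=\phi_\mu(g)$ (the Jacobian of $n\mapsto m^{-1}nm$ on $N(\A)$ is $|a/d|_\A=1$ by the product formula, and $m\in\calG(\Q)$ acts trivially by invariance of $F_f$). Combining the adelic Iwasawa decomposition, whose local version is Lemma \ref{lem:iwasaw_decomp}, with strong approximation for the diagonal torus (respecting the square-determinant condition), every $g$ can be written as $n_0\,m_0\,(g_\infty\times k)$ with $n_0\in N(\A)$, $m_0\in M(\Q)$ and $g_\infty\times k\in\calQ_\infty\times\calK$; hence $\phi_\mu(g)=\phi_\mu(g_\infty\times k)$ and it suffices to treat the latter. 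This is the adelic bookkeeping carried out in \cite{KL}, Propositions 12.2 and 12.15, in the scalar-valued case, and the same steps apply here.

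For $g=g_\infty\times k$ with $\tau=g_\infty i\in\H$, I would insert the component formula \eqref{eq:component_automorphic_form}. For $n=n((r_\infty,r_f))$ one has $ng=(n(r_\infty)g_\infty)\times(n(r_f)k)$ with $j(n(r_\infty)g_\infty,i)=j(g_\infty,i)$ and $n(r_\infty)g_\infty i=\tau+r_\infty$, so, using \eqref{eq:weil_rep_explicit} to evaluate $\omega_p(n(-r_p))\varphi_p^{(\lambda_p)}=\psi_p(-r_p q(\lambda_p))\varphi_p^{(\lambda_p)}$,
\[
(F_f)_\mu(ng)=j(g_\infty,i)^{-\kappa}\sum_{\lambda\in D}f_\lambda(\tau+r_\infty)\prod_{p<\infty}\psi_p(-r_p q(\lambda_p))\,\langle\omega_p^{-1}(k_p)\varphi_p^{(\lambda_p)},\varphi_p^{(\mu_p)}\rangle.
\]
The integral then factors. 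The finite part contributes $\prod_{p<\infty}\int_{\Z_p}\psi_p(-r_p q(\lambda_p))\,dr_p$, which equals $1$ when $q(\lambda_p)\in\Z_p$ for all $p$ (i.e. $q(\lambda)=0$ in $D$) and $0$ otherwise, since $\psi_p$ is trivial on $\Z_p$ while $r_p\mapsto\psi_p(-r_p q(\lambda_p))$ is a non-trivial additive character as soon as $q(\lambda_p)\notin\Z_p$.

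Finally, for the surviving $\lambda$ the expansion $f_\lambda(\tau)=\sum_{n} c(\lambda,n)e(n\tau)$ has integral exponents, and the archimedean integral $\int_0^1 f_\lambda(\tau+r_\infty)\,dr_\infty=c(\lambda,0)$ extracts the zeroth coefficient. Since $f$ is a cusp form, $c(\lambda,0)=0$ for every $\lambda$, whence $\phi_\mu(g_\infty\times k)=0$, and the reduction gives $\phi_\mu(g)=0$ for almost every $g$ and all $\mu$. I expect the main obstacle to be not the final computation, which is merely the extraction of $c(\lambda,0)$, but the reduction step: one must verify carefully that $\phi_\mu$, which is only left $B(\Q)N(\A)$-invariant rather than fully $\calG(\Q)$-invariant, is nonetheless pinned down by its values on $\calQ_\infty\times\calK$, and that the normalisations of the local Haar measures and of $\omega_f$ remain consistent throughout. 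This is precisely where following \cite{KL} closely pays off.
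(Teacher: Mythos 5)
Your proof is correct and follows essentially the same route as the paper's: both unfold the definition of $F_f$, factor the integral over $N(\Q)\backslash N(\A)\cong \Q\backslash\A$ into a finite part over $\widehat{\Z}$ (which vanishes unless $q(\lambda)\in\Z$) and an archimedean part over $[0,1)$ that extracts the constant Fourier coefficient, which vanishes by cuspidality. The only cosmetic differences are that the paper computes the general $r$-th Fourier coefficient $\int F_\mu(n(x)g)\psi(rx)\,dx$ for $r\in\Q$ and then specializes to $r=0$, and that it carries the dependence on $g$ directly through the decomposition $g=\gamma(g_\infty\times g_f)$ built into the definition of $F_f$ rather than performing your explicit Iwasawa/strong-approximation reduction to $g\in\calQ_\infty\times\calK$.
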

\begin{proof}
  The proof proceeds along the lines of the one of Proposition 12.2 in \cite{KL}.
  Let $n= n(x_\Q)(n(x_\infty)\times n(x_f))\in N(\A)$ and $g=\gamma(g_\infty \times g_f)\in \calG(\A)$. Then the definition of $F_f$ and $\omega_f$  yields
  \begin{align*}
    F_\mu(ng) &= \langle j(g_\infty, i)^{-\kappa}j(n(x_\infty),g_\infty i)^{-\kappa}\omega_f^{-1}(g_f)\omega_f^{-1}(n(x_f))f(n(x_\infty)(g_\infty i)), \varphi_\mu\rangle\\
    &= j(g_\infty, i)^{-\kappa}j(n(x_\infty),g_\infty i)^{-\kappa}\sum_{\nu\in D}\psi_f(-x_fq(\nu))f_\nu(n(x_\infty)(g_\infty i))\langle \omega_f^{-1}(g_f)\varphi_\nu,\varphi_\mu\rangle.
  \end{align*}
  As suggested in \cite{KL}, Prop. 12.2., we calculate  more generally for $r\in \Q$
  \begin{equation}\label{eq:fourier_integral}
    \begin{split}
    &    \int_{N(\Q)\setminus N(\A)}F_\mu(n(x)g)\psi(rx)dx \\
    & = j(g_\infty,i)^{-\kappa}\sum_{\nu\in D}\langle \omega_f^{-1}(g_f)\varphi_\nu,\varphi_\mu\rangle\times \\
   & \int_{\N(\Z)\setminus (N(\R)\times N(\widehat{\Z}))}\psi_f(-x_fq(\nu))f_\nu(n(x_\infty)(g_\infty i))\psi_\infty(rx_\infty)\psi_f(rx_f)dx_fdx_\infty.
      \end{split}
    \end{equation}
  We can write the integral in the last expression as
  \begin{align*}
    \int_0^1f_\nu(n(x_\infty)(g_\infty i))\psi_\infty(rx_\infty)\int_{N(\widehat{\Z})}\psi_f((r-q(\nu))x_f)dx_f dx_\infty,
  \end{align*}
  where the integral over $N(\widehat{\Z})$ is one if and only if $r\in \Z+q(\nu)$.
  For such $r$ (note that $\psi_\infty(x_\infty) = e(-x_\infty)$), taking into account that $\int_0^1f_\nu(x_\infty+\tau)e(-rx_\infty)dx_\infty = e(r\re(\tau))c(\nu,r)$, where $c(\nu,r)$ is the Fourier coefficient of $f$ with respect to $(\nu,r)$, we finally obtain
  \begin{align*}
    \int_{N(\Q)\setminus N(\A)}F_\mu(n(x)g)\psi(rx)dx = j(g_\infty,i)^{-\kappa}\sum_{\nu\in D}\langle \omega_f^{-1}(g_f)\varphi_\nu,\varphi_\mu\rangle e(r\re(\tau))c(\nu, r),
  \end{align*}
  where $\tau = g_\infty i$. Since $f$ is a cusp form, we have that for $r=0$ all coefficients $c(\nu, r)$ vanish. This  gives the desired result.  
\end{proof}

The image of $S_\kappa(\rho_L)$ under the map $\mathscr{A}$ in \eqref{eq:mod_adelic} can be characterized more closely:

\begin{theorem}\label{thm:correspondence_mod-adelic}
  Let $A_{\kappa}(\omega_f)$ be the space of functions $F\in L^2_0(\omega)$ satisfying
  \begin{enumerate}
  \item[i)]
    $    F(gk) = \omega_f(k)^{-1}F(g)$ for all $k\in \calK$ and all $g\in\calG(\A)$
  \item[ii)]
    $F(g\kzxz{\cos(\theta)}{\sin(\theta)}{-\sin(\theta)}{\cos(\theta)}) = e^{i\kappa\theta}F(g)$ for all $\theta\in [0,2\pi)$ and all $g\in \calG(\A)$
  \item[iii)]
    All the components $F_\mu$ of  $F$, considered as a function of $\calQ_\infty$ alone, satisfy the differential equation $L F_\mu = 0$.  Here $L$ is the differential operator given by
    \begin{equation}
    L = e^{-2i\theta}\left(-2iy\frac{\partial}{\partial x}+2y\frac{\partial}{\partial y} +i \frac{\partial}{\partial \theta}\right)
    \end{equation}
    with respect to the coordinates referring to the decomposition
\begin{equation}\label{eq:decomp_q_infty}
g_\infty = z_\infty \zxz{1}{x}{0}{1}\kzxz{y^{\frac{1}{2}}}{0}{0}{y^{-\frac{1}{2}}}\kzxz{\cos(\theta)}{\sin(\theta)}{-\sin(\theta)}{\cos(\theta)}
\end{equation}
of $g_\infty\in \calQ_\infty$. 
  \end{enumerate}
  Then the map $\mathscr{A}$ defines an isometry from $S_\kappa(\rho_L)$ onto $A_{\kappa}(\omega_f)$. 
\end{theorem}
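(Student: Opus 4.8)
The plan is to verify three things: that $\mathscr{A}(f)=F_f$ really lands in $A_\kappa(\omega_f)$, that $\mathscr{A}$ preserves scalar products, and that $\mathscr{A}$ is surjective, the last being the substantial point. That $F_f$ lies in $L^2_0(\omega_f)$ is already furnished by Proposition \ref{rem:correspondence_L2} together with Lemma \ref{lem:cuspidal}, so it remains to check conditions i)--iii). Condition i) is immediate from \eqref{eq:mod_adelic}: writing $g=\gamma(g_\infty\times k')$ and using that $\omega_f$ is a homomorphism of $\calK$ gives $F_f(gk)=\omega_f(k'k)^{-1}j(g_\infty,i)^{-\kappa}f(g_\infty i)=\omega_f(k)^{-1}F_f(g)$. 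For ii) I would use that $r_\theta=\kzxz{\cos\theta}{\sin\theta}{-\sin\theta}{\cos\theta}\in\SO(2)\subset\calQ_\infty$ fixes $i$ and satisfies $j(r_\theta,i)=e^{-i\theta}$; hence $j(g_\infty r_\theta,i)^{-\kappa}=e^{i\kappa\theta}j(g_\infty,i)^{-\kappa}$ and $f(g_\infty r_\theta i)=f(g_\infty i)$, which yields $F_f(gr_\theta)=e^{i\kappa\theta}F_f(g)$. Condition iii) is the analytic heart of this direction: parametrising $g_\infty$ as in \eqref{eq:decomp_q_infty} one computes $j(g_\infty,i)=y^{-1/2}e^{-i\theta}$, so each component has the shape $y^{\kappa/2}e^{i\kappa\theta}h(\tau)$ with $h=f_\lambda$, and a direct differentiation gives
\[
L\left(y^{\kappa/2}e^{i\kappa\theta}h\right)=2y^{\kappa/2+1}e^{-i\theta}\left(h_y-ih_x\right).
\]
Thus $LF_\mu=0$ is equivalent to the Cauchy--Riemann equation $h_y=ih_x$, and since each $f_\lambda$ is holomorphic, $LF_\mu=0$ follows.

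For the isometry I would invoke the computation already carried out in the proof of Proposition \ref{rem:correspondence_L2}, iii), which identifies $\int_{\overline{\calG}(\Q)\setminus\overline{\calG}(\A)}\|F_f(g)\|^2\,dg$ with the Petersson norm of $f$. Since $\mathscr{A}$ is visibly $\C$-linear and the scalar products on both sides are sesquilinear, polarisation upgrades this norm identity to an identity of scalar products, so $\mathscr{A}$ is an isometric embedding and in particular injective.

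The main work is surjectivity, for which I would produce an explicit inverse via $F\mapsto f_F$, $f_F(\tau)=j(g_\tau,i)^\kappa F(g_\tau\times 1_f)$ with $g_\tau=\kzxz{1}{x}{0}{1}\kzxz{y^{1/2}}{0}{0}{y^{-1/2}}$. Holomorphy of $f_F$ follows from the displayed identity read backwards: condition iii) forces $h_y=ih_x$ for each component. The transformation law is the delicate step. For $\gamma=\kzxz{a}{b}{c}{d}\in\SL_2(\Z)$ one has the Iwasawa-type relation $\gamma g_\tau=g_{\gamma\tau}\,r_\theta$ in $\calQ_\infty$ with $e^{-i\theta}=(c\tau+d)/|c\tau+d|$, whence $g_{\gamma\tau}\times 1_f=\gamma_\Q\cdot\bigl(g_\tau r_{-\theta}\times\gamma_f^{-1}\bigr)$ with $\gamma_f^{-1}\in\calK$. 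Applying $\calG(\Q)$-invariance, then condition i), then condition ii) with $r_{-\theta}$, and finally the key identity $\omega_f(\gamma_f)=\rho_L(\gamma)$ from \eqref{eq:adelic_weil_rerpr}, one obtains after collecting the automorphy factors
\[
f_F(\gamma\tau)=(c\tau+d)^\kappa\,\rho_L(\gamma)\,f_F(\tau),
\]
exactly the weight-$\kappa$, type-$\rho_L$ modularity.

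It then remains to control the behaviour at the cusp. Cuspidality is read off from $F\in L^2_0(\omega_f)$: the vanishing of the constant term of each component, via the Fourier computation of Lemma \ref{lem:cuspidal}, kills $c(\lambda,0)$, while square-integrability of $F$ rules out negative Fourier coefficients; hence $f_F\in S_\kappa(\rho_L)$. Finally, a short check of the explicit formulas gives $f_{F_f}=f$ and $\mathscr{A}(f_F)=F$ --- the latter by reducing $F(g)$, through $\calG(\Q)$-invariance together with i) and ii), to $\omega_f(k)^{-1}j(g_\infty,i)^{-\kappa}f_F(g_\infty i)$ --- so $\mathscr{A}$ and $F\mapsto f_F$ are mutually inverse. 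I expect the bookkeeping of automorphy factors and the rotation element $r_\theta$ in the transformation law, rather than any conceptual difficulty, to be the chief obstacle.
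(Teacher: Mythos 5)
Your proposal is correct and follows essentially the same route as the paper: both directions use the map $\mathscr{A}$ and Kudla's explicit inverse $f_F(\tau)=j(g_\tau,i)^\kappa F(g_\tau\times 1_f)$, the isometry comes from the $L^2$/Petersson norm computation of Proposition \ref{rem:correspondence_L2}, and cuspidality of $f_F$ is extracted from $F\in L^2_0(\omega_f)$; the paper merely delegates the verifications you carry out by hand (conditions i)--iii), the transformation law via $\gamma g_\tau=g_{\gamma\tau}r_\theta$ and $\omega_f(\gamma_f)=\rho_L(\gamma)$) to Werner, Kudla and Knightly--Li. The only blemish is the phase in your displayed computation of $L\bigl(y^{\kappa/2}e^{i\kappa\theta}h\bigr)$, which should be $e^{i(\kappa-2)\theta}$ rather than $e^{-i\theta}$, but this does not affect the equivalence with the Cauchy--Riemann equation.
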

\begin{proof}
  This theorem is well known for scalar valued automorphic forms, see e. g. \cite{Ge} or \cite{KL}. Most parts of its proof can be settled with reference to the proof of its scalar valued analogue.
  
 Let $f\in S_\kappa(\rho_L)$.  It follows from Proposition \ref{rem:correspondence_L2} and Lemma \ref{lem:cuspidal} that $F_f\in L^2_0(\omega_f)$.
 The assertion in i)  is proved in \cite{We}, Theorem 51, the one in ii) results from a straightforward calculation analogous to the scalar valued case (see \cite{KL}, Proposition 12.5).
 For iii) note that $F_\mu(g_\infty \times 1_f) = y^{k/2}e^{ik\theta}f_\mu(x+iy)$ if we decompose $g_\infty\in \calQ_\infty$ according to \eqref{eq:decomp_q_infty}. The same proof as in \cite{KL}, applied to each component $F_\mu$, establishes the result using the assumption $f\in S_\kappa(\rho_L)$.

Kudla \cite{Ku} defined a map that assigns to a vector valued function $F$ on $\calG(\A)$ a vector valued function $f_F$ on $\H$: 
\begin{equation}\label{eq:adelic_mod}
  F\mapsto f_F, \; f_F(\tau) = j(g_\tau,i)^\kappa F(g_\tau\times 1_f),
\end{equation}
where $g_\tau = \zxz{1}{x}{0}{1}\kzxz{y^{\frac{1}{2}}}{0}{0}{y^{-\frac{1}{2}}}$ and $\tau = g_\tau i = x+iy\in \H$. It is easily seen that this map is well-defined and that it is the inverse map of $\mathscr{A}$ (see \cite{KL}, Prop. 12.5, for the corresponding scalar valued result). It remains to show that $f_F$ is an element of $S_\kappa(\rho_L)$ for any $F\in A_{\kappa}(\omega_f)$.
Kudla proved that  $f_F$ transforms like a vector valued modular form with respect to $\omega_f$  if $F\in A_{\kappa}(\omega_f)$ (\cite{Ku}, Lemma 1.1).  Since each component of $F$ satisfies the differential equation in iii), it follows that each component of $f_F$ is holomorphic on the upper half plane (see \cite{KL}, Prop. 12.5).  In view of these two properties, $f_F$ possess a Fourier expansion, see \cite{Br1}, p. 18. By Proposition \ref{rem:correspondence_L2} we know that the Petersson norm of $f_F$ coincides with the $L^2$ norm of $F$, it is in particular finite.
One can prove in the same way as in Prop. 3.39 of \cite{KL} that $f_F$ is an element of $S_\kappa(\rho_L)$. Thus, the map  in \eqref{eq:mod_adelic} is surjective and an isometry. 
\end{proof}

\subsection{The action of $\calH(\calQ_p//\calK_p,\omega_p)$ on $A_{\kappa}(\omega_f)$}
The goal of this subsection is to define an action of $\calG(\A)$ via the Hecke algebra $\calH^+(\calQ_p//\calK_p,\omega_p)$ (or $\calH(\calQ_p//\calK_p,\omega_p)$ if $(p, |D|)=1$) on the space $A_{\kappa}(\omega_f)$ of vector valued automorphic forms. Whenever we write $\calH^+(\calQ_p//\calK_p,\omega_p)$ we tacitly also mean $\calH(\calQ_p//\calK_p,\omega_p)$ in the case $(p,|D|)=1$ and don't mention the latter in the following. Since $\calH^+(\calQ_p//\calK_p,\omega_p)$ acts only on the $p$-component of an element $F\in A_{\kappa}(\omega_f)$, we need to complement the contribution of $\calH^+(\calQ_p//\calK_p,\omega_p)$ with suitable operators on the other places.  The envisaged action will be defined in such a way that it is compatible with the action of Hecke operators on $S_\kappa(\rho_L)$. Werner proposed in \cite{We}, Chapter 6, the definition of an adelic vector valued Hecke operator mimicking Gelbart's approach of an adelic scalar valued Hecke operator. Our approach is slightly more general and transfers the action of the classical spherical Hecke algebra (as for instance in \cite{Mu1}, $\S$ 6), to the vector valued setting.   

\begin{definition}\label{deinition:adelic_hecke_op}
  Let $p\in \Z$ be a fixed prime, $g=\gamma(g_\infty\times g_f)\in \calG(\A)$ and $T_p\in \calH^+(\calQ_p//\calK_p,\omega_p)$. Then we define for a fixed $h\in \calG(\A)$
  \begin{equation}\label{eq:shift_op}
    R^{T_p}(h): \calF_L\rightarrow \calF_L, \quad F\mapsto R^{T_p}(h)F= \bigotimes_{q<\infty}R_q^{T_p}(h_q)F_q 
  \end{equation}
  with
  \begin{equation}\label{eq:local_hecke_op}
    R_q^{T_p}(h_q)F_q(g_q) = 
    \begin{cases}
      F_q(g_\infty, g_qh_q), & q\not=p\\
      T_p(h_p)(F_p(g_\infty,g_p)), & q= p.
\end{cases}
  \end{equation}
  The operator
  \begin{equation}\label{eq:global_adelic_hecke_op}
\calT^{T_p}:A_{\kappa}(\omega_f)\rightarrow A_{\kappa}(\omega_f),\quad \calT^{T_p}(F)(g)=\sum_{x_p\in \calQ_p/\calK_p}R^{T_p}(\iota_p(x_p))F(g\iota_p(x_p))
  \end{equation}
  can be interpreted as a  vector valued analogue of the construction in \cite{Mu1}.  
  For the sake of better readability, we omit the argument $g_\infty$ in the subsequent calculations and assume tacitly that the local functions also depend on $g_\infty$.
\end{definition}

\begin{remark}\label{rem:prop_adelic_hecke_op}
  \begin{enumerate}
  \item[i)]
    If we decompose $\calT^{T_p}$ into  into its components, we obtain
    \begin{equation}\label{eq:hecke_op_adelich_components}
      \calT^{T_p}(F)(g) = \bigotimes_{q\not=p}F_q(g_g)\otimes \left(\sum_{x_p\in \calQ_p/\calK_p}T_p(x_p)F_p(g_px_p)\right).
      \end{equation}
  Since $T_p\in \calH^+(\calQ_p//\calK_p, \omega_p)$ has compact support, the sum in \eqref{eq:hecke_op_adelich_components} is finite. It can be verified by means of Theorem \ref{thm:correspondence_mod-adelic}, i), and Definition \ref{def:spherical_hecke_algebra}, ii), that \eqref{eq:hecke_op_adelich_components} and therefore  \eqref{eq:global_adelic_hecke_op} is independent of the representative $x_p\in \calQ_p/\calK_p$ and thus well defined. We will show later in the paper that $\calT^{T_{p}}(F)$ is indeed contained in $A_{\kappa}(\omega_f)$. 
\item[ii)]  
  Let $p$ be a prime, $T_p, T_p' \in \calH^+(\calQ_p//\calK_p,\omega_p)$.  
  Then  by a straightforward calculation, using \eqref{eq:hecke_op_adelich_components} and the bilinearity of the tensor product, we obtain
  \begin{equation}\label{eq:additive}
    \calT^{xT_p+ yT_p'}(F)(g) = x\calT^{T_p}(F)(g) + y\calT^{T_p'}(F)(g)
  \end{equation}
  for all $F\in A_k(\omega_f)$, all $g\in \calG(\A)$ and all $x,y\in \C$.
  
  There is also a compatibility relation regarding convolution: 
  \begin{align*}
  \calT^{T_p\ast T_p'}(F)(g) = \bigotimes_{q\not=p}F_q(g_q)\otimes \left(\sum_{x_p\in \calQ_p/\calK_p}\left(\sum_{y_p\in \calQ_p/\calK_p}T_p(y_p)\circ T_p'(y_p^{-1}x_p)\right)(F_p(g_px_p)) \right).
  \end{align*}
  Since both sums over $\calQ_p/\calK_p$ are finite, we can change their order and obtain
  \begin{align*}
    &  \bigotimes_{q\not=p} F_q(g_q)\otimes \left(\sum_{y_p\in \calQ_p/\calK_p}\left(\sum_{x_p\in \calQ_p/\calK_p}T_p(y_p)\circ T_p'(y_p^{-1}x_p)(F_p(g_px_p))\right)\right) \\
    &= \bigotimes_{q\not=p} F_q(g_q)\otimes\left(\sum_{y_p\in \calQ_p/\calK_p}T_p(y_p) \left(\sum_{z_p\in \calQ_p/\calK_p}T_p'(z_p)(F_p(g_py_pz_p))\right)\right)\\
   &= (\calT^{T_p}\circ \calT^{T_p'})(F)(g),
  \end{align*}
  where we have made the substitution $z_p = y_p^{-1}x_p$ in the second last equation. 
  \end{enumerate}
%
%
\end{remark}

\begin{lemma}\label{lem:properties_f}
  Let $p$ be a  prime,   $T_{k,l}\in \calH^+(\calQ_p//\calK_p,\omega_p)$ and $\calH^+(\calQ_p//\calK_p,\omega_p)$ as given in Corollary \ref{cor:struct_hecke_algebra} and Theorem \ref{thm:satake_isom_coprime}, respectively and $F\in A_{\kappa}(\omega_f)$.  Then $\calT^{T_{k,l}}$
  \begin{enumerate}
  \item[i)]
    is $\calG(\Q)$-invariant and 
  \item[ii)]
    fulfils
\begin{align*}
&    \calT^{T_{k,l}}(F)(gk) = \omega_f^{-1}(k)\calT^{T_{k,l}}(F)(g) \text{ for all } k\in \calK \text{  and all } g\in \calG(\A), \\
  & \calT^{T_{k,l}}(F)(zg) =\omega_f^{-1}(z_f)\calT^{T_{k,l}}(F)(g)  \text{ for all } z\in \calZ(\A) \text{  and all } g\in \calG(\A).
  \end{align*}
    \end{enumerate}
\end{lemma}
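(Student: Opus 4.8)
The plan is to work with the componentwise description \eqref{eq:hecke_op_adelich_components} of the operator and to isolate the single structural fact that drives all three statements: the shift $R^{T_p}(\iota_p(x_p))$ is merely the application of a fixed (i.e.\ $g$-independent) linear endomorphism of $S_L$ to the \emph{values} of $F$. Concretely, setting
\[
\Phi_{x_p} = \Bigl(\bigotimes_{q\neq p}\id_{S_{L_q}}\Bigr)\otimes T_p(x_p)\in \End(S_L),
\]
one has $R^{T_p}(\iota_p(x_p))F = \Phi_{x_p}\circ F$, so that
\[
\calT^{T_p}(F)(g) = \sum_{x_p\in \calQ_p/\calK_p}\Phi_{x_p}\bigl(F(g\,\iota_p(x_p))\bigr).
\]
The sum is finite and independent of the coset representatives by Remark \ref{rem:prop_adelic_hecke_op}. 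Since each $\Phi_{x_p}$ is independent of $g$, every equivariance property of $\calT^{T_p}(F)$ reduces to the corresponding property of $F$ supplied by Theorem \ref{thm:correspondence_mod-adelic} and the defining conditions of $L^2(\calG(\Q)\setminus\calG(\A),\omega_f)$.

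For i), let $\gamma\in\calG(\Q)$. As $F$ descends to $\calG(\Q)\setminus\calG(\A)$ and $\gamma$ is embedded diagonally, $F(\gamma\,g\,\iota_p(x_p)) = F(g\,\iota_p(x_p))$ for every $x_p$; applying $\Phi_{x_p}$ and summing yields $\calT^{T_p}(F)(\gamma g)=\calT^{T_p}(F)(g)$ immediately. Likewise the central character relation in ii) is quick: for $z\in\calZ(\A)$, condition ii) in the definition of $L^2(\calG(\Q)\setminus\calG(\A),\omega_f)$ gives $F(z\,g\,\iota_p(x_p))=\omega_f^{-1}(z_f)F(g\,\iota_p(x_p))$. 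By \eqref{eq:weil_repr_scalar_matrix} and the extension of $\omega_p$ to scalar matrices, each $\omega_p(z_p)$ acts as a scalar on $S_{L_p}$, so $\omega_f^{-1}(z_f)$ is scalar on $S_L$ and commutes with every $\Phi_{x_p}$; pulling it out of the sum gives $\calT^{T_p}(F)(zg)=\omega_f^{-1}(z_f)\calT^{T_p}(F)(g)$.

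The substantive point, and the main obstacle, is the right $\calK$-equivariance in ii), because the translate by $k$ interacts simultaneously with the summation variable, the argument-shift $\iota_p(x_p)$, and the operator $T_p(x_p)$. I would decompose $k=k^{(p)}\iota_p(k_p)$, where $k^{(p)}$ agrees with $k$ away from $p$ and has trivial $p$-component (the two factors commute), and rewrite $g\,k\,\iota_p(x_p)=g\,\iota_p(k_px_p)\,k^{(p)}$. Substituting $y_p=k_px_p$, which permutes $\calQ_p/\calK_p$ since $k_p\in\calK_p$, turns the sum into $\sum_{y_p}\Phi_{k_p^{-1}y_p}\bigl(F(g\,\iota_p(y_p)\,k^{(p)})\bigr)$. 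Two inputs now combine: the bi-equivariance $T_p(k_p^{-1}y_p)=\omega_p(k_p^{-1})\circ T_p(y_p)$ from Definition \ref{def:spherical_hecke_algebra} ii), which factors $\Phi_{k_p^{-1}y_p}=\bigl(\bigl(\bigotimes_{q\neq p}\id_{S_{L_q}}\bigr)\otimes\omega_p^{-1}(k_p)\bigr)\circ\Phi_{y_p}$; and the right $\calK$-invariance of $F$ from Theorem \ref{thm:correspondence_mod-adelic} i), which gives $F(g\,\iota_p(y_p)\,k^{(p)})=\omega_f^{-1}(k^{(p)})F(g\,\iota_p(y_p))$.

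Finally I would observe that $\Phi_{y_p}$ acts only on the $p$-th tensor factor whereas $\omega_f^{-1}(k^{(p)})$ acts on the complementary factors, so these two commute; regrouping the $p$-factor contribution $\omega_p^{-1}(k_p)$ with $\omega_f^{-1}(k^{(p)})$ reconstitutes $\omega_f^{-1}(k)=\bigotimes_q\omega_q^{-1}(k_q)$, which is $g$-independent and can be pulled out of the sum, leaving exactly $\omega_f^{-1}(k)\calT^{T_p}(F)(g)$. The entire difficulty lies in the bookkeeping of this step — ensuring that the change of variable matches the bi-$\calK_p$-equivariance of $T_p$ and that the $p$-factor is separated cleanly from the rest; the other two assertions are then essentially formal consequences of the reformulation above.
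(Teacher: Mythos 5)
Your proposal is correct and follows essentially the same route as the paper's proof: the $\Phi_{x_p}$ reformulation is just a repackaging of the componentwise formula \eqref{eq:hecke_op_adelich_components}, and the key steps — the substitution $y_p=k_px_p$, the bi-$\calK_p$-equivariance of $T_{k,l}$ from Definition \ref{def:spherical_hecke_algebra}, the right $\calK$-equivariance of $F$, and the scalar (hence commuting) action of the centre via \eqref{eq:weil_repr_scalar_matrix} — are exactly those used in the paper. The only cosmetic difference is that the paper splits the final regrouping into the cases $p\mid|D|$ and $p\nmid|D|$, which your uniform treatment subsumes.
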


\begin{proof}
  \begin{enumerate}
  \item[i)]
  Since  $F\in A_{\kappa}(\omega_f)$  is $\calG(\Q)$-invariant,  the same holds for $\calT^{T_{k,l}}(F)$ as can be seen in \eqref{eq:hecke_op_adelich_components}.
  \item[ii)]
    By Theorem \ref{thm:correspondence_mod-adelic}, i), and Definition \ref{def:spherical_hecke_algebra}, ii) we have
    \begin{align*}
      \calT^{T_{k,l}}(F)(gk) &= \bigotimes_{q\not= p}F_q(g_qk_q)\otimes \left(\sum_{x_p\in \calQ_p/\calK_p}T_{k,l}(x_p)(F_p(g_pk_px_p))\right) \\
      &=\bigotimes_{q\not= p}\omega_q^{-1}(k_q)F_q(g_q)\otimes\left(\sum_{y_p\in \calQ_p/\calK_p}T_{k,l}(k_p^{-1}y_p)(F_p(g_py_p))\right)\\
      &=\begin{cases}
      \bigotimes_{q\not= p}\omega_q^{-1}(k_q)F_q(g_q)\otimes \omega_p^{-1}(k_p)\left(\sum_{y_p\in \calQ_p/\calK_p}T_{k,l}(y_p)(F_p(g_py_p))\right),& \text{ if } p| |D|\\
 \bigotimes_{q\not= p}\omega_q^{-1}(k_q)F_q(g_q)\otimes \left(\sum_{y_p\in \calQ_p/\calK_p}T_{k,l}(y_p)(F_p(g_py_p))\right),& \text{ if } p\nmid |D|    
      \end{cases} \\
      & = \omega_f^{-1}(k)\calT^{T_{k,l}}(F)(g).      
    \end{align*}
    For the second equation we used the substitution $y_p = k_px_p$. This settles the first claimed identity.
  
    For the second identity we make use of the fact that $F\in A_{\kappa}(\omega_f)$ and that $\omega_p(z_p)$ acts for $z_p\in \calZ(\Z_p)$ on $S_{L_p}$ by multiplication with a scalar (cf. \eqref{eq:weil_repr_scalar_matrix}), which commutes with the operator $T_{k,l}$. Let $z=z_\Q(z_\infty \times z_f)$ with $z_f = (z_q)_{q <\infty}\in \calK$. Then
    \begin{align*}
       \calT^{T_{k,l}}(F)(zg) &= \bigotimes_{q\not= p}\omega_q^{-1}(z_q)F_q(g_q)\otimes \left(\sum_{x_p\in \calQ_p/\calK_p}T_{k,l}(x_p)(\omega_p(z_p)^{-1}F_p(g_px_p))\right) \\
       &= \omega_f^{-1}(z_f)\calT^{T_{k,l}}(F)(g).
    \end{align*}
    \end{enumerate}
  \end{proof}

Let $p$ be a prime, $(k,l)\in \Lambda_+$, $T_{k,l}\in \calH^+(\calQ_p//\calK_p,\omega_p)$ and $\calH^+(\calQ_p//\calK_p,\omega_p)$ as in Corollary \ref{cor:struct_hecke_algebra} and Theorem \ref{thm:satake_isom_coprime}, respectively and  $\calT^{T_{k,l}}$    as in Definition \ref{deinition:adelic_hecke_op}. We now show that the map $\mathscr{A}$ commutes with the Hecke operators $\calT^{T_{k,l}}$ and $T(m(p^{-k},p^{-l}))$ on both sides and thereby confirm that $\calT^{T_{k,l}}$ indeed preserves $A_{\kappa}(\omega_f)$. 
For a prime $p\nmid |D|$ this result was in principle proved by Werner (cf. \cite{We}, Theorem 53), but not in our framework and not for a general Hecke operator $T(m(p^{-k},p^{-l}))$.

\begin{theorem}\label{thm;action_hecke_op_notcoprime}
  Let $p$ be a prime and $(k,l)\in \Lambda_+$.
  If $p$ divides $|D|$, let $T_{k, l}\in \calH^+(\calQ_p//\calK_p,\omega_p)$ as in Corollary \ref{cor:struct_hecke_algebra}. If $(p,|D|) = 1$, let $T_{k, l}=\mathbbm{1}_{\calK_pm(p^k, p^l)\calK_p}\id_{S_{L_p}}\in \calH(\calQ_p//\calK_p,\omega_p)$ be as in  Theorem \ref{thm:satake_isom_coprime}. Further, let  $\calT^{T_{k,l}}$  be  as in Definition \ref{deinition:adelic_hecke_op} and \newline$T(m(p^{-k},p^{-l}))$ the Hecke operator as defined in Section \ref{sec:preliminaries}. 
  Then for any $f\in S_\kappa(\rho_L)$ we have 
  \begin{equation}\label{eq;compatibility_notcoprime}
    \calT^{T_{k,l}}(F_f) = F_{p^{(k+l)(\frac{\kappa}{2}-1)}T(m(p^{-k},p^{-l}))f},
  \end{equation}
  where $F_f$ is the automorphic form related to $f$ via the map $\mathscr{A}$. 
\end{theorem}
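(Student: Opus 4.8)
The plan is to prove the identity by passing through the inverse of the isomorphism $\mathscr{A}$ of Theorem \ref{thm:correspondence_mod-adelic}. Set $\tilde f = p^{(k+l)(1-\frac{\kappa}{2})}T(m(p^{-k},p^{-l}))f$; since the Hecke operators of Section \ref{subsec:hecke_operators_vec_val} preserve $S_\kappa(\rho_L)$, we have $\tilde f\in S_\kappa(\rho_L)$ and hence $F_{\tilde f}=\mathscr{A}(\tilde f)\in A_\kappa(\omega_f)$. By Lemma \ref{lem:properties_f} the function $\calT^{T_{k,l}}(F_f)$ is $\calG(\Q)$-invariant, right $\calK$-equivariant and has the correct central character, and it inherits the archimedean weight-$\kappa$ equivariance (property ii of Theorem \ref{thm:correspondence_mod-adelic}) from $F_f$ because $\calT^{T_{k,l}}$ alters only the place $p$. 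By strong approximation (Theorem \ref{thm:strong_approximation}) together with the decomposition \eqref{eq:decomp_q_infty}, any function with these properties is determined by the function $\tau\mapsto j(g_\tau,i)^\kappa\,(\,\cdot\,)(g_\tau\times 1_f)$ on $\H$. Hence it suffices to prove
\[
j(g_\tau,i)^\kappa\,\calT^{T_{k,l}}(F_f)(g_\tau\times 1_f)=\tilde f(\tau)\qquad(\tau\in\H),
\]
after which $\calT^{T_{k,l}}(F_f)=\mathscr{A}(\tilde f)=F_{\tilde f}$ follows at once.

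Next I would unwind the left-hand side. By \eqref{eq:hecke_op_adelich_components},
\[
\calT^{T_{k,l}}(F_f)(g_\tau\times 1_f)=\bigotimes_{q\ne p}(F_f)_q(1_q)\otimes\Bigl(\sum_{x_p\in\calQ_p/\calK_p}T_{k,l}(x_p)\bigl((F_f)_p(x_p)\bigr)\Bigr),
\]
the archimedean dependence on $g_\tau$ being suppressed. Since $\supp T_{k,l}=\calK_pm(p^k,p^l)\calK_p$, only the cosets $x_p\calK_p$ contained in this double coset contribute; I fix a system of representatives $\{x_j\}$ for $(\calK_pm(p^k,p^l)\calK_p)/\calK_p$ and write each $x_j=k_1^{(j)}m(p^k,p^l)k_2^{(j)}$, so that $T_{k,l}(x_j)=\omega_p(k_1^{(j)})\circ T(k,l)\circ\omega_p(k_2^{(j)})$ with the intertwiner $T(k,l)$ of Lemma \ref{lem:struct_hecke_double_coset_1} (for $k=l$ only the scalar term $T_k=\frac{g(D_p)}{g_{p^k}(D_p)}\id$ occurs).

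The heart of the argument is a strong-approximation dictionary between the finite and the classical pictures. For each $x_j$ I would produce a rational matrix $\gamma_j\in\calG(\Q)$ with $\gamma_j^{-1}\bigl(g_\tau\times\iota_p(x_j)\bigr)\in\calQ_\infty\times\calK$; the collection $\{\gamma_j^{-1}\}$ then runs, without repetition, over a full set of representatives of $\Gamma\backslash\Gamma m(p^{-k},p^{-l})\Gamma$, exactly as in the definition \eqref{eq:hecke_op_copr} of $T(m(p^{-k},p^{-l}))$ (the passage between $m(p^k,p^l)$ and $m(p^{-k},p^{-l})$ being governed by \eqref{eq:hecke_relation_1} and \eqref{eq:weil_diageonal_matrix}). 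Using $\calG(\Q)$-invariance of $F_f$ and the defining formula \eqref{eq:mod_adelic}, each summand $F_f(g_\tau\times\iota_p(x_j))$ rewrites as a Weil-twisted copy of $(f\mid_{\kappa,L}\gamma_j^{-1})(\tau)$. Here the relation $\rho_L=\omega_f$ of \eqref{eq:adelic_weil_rerpr}, the extension formulas of Definitions \ref{def:local_weil_double_coset} and \ref{def:ext_local_weil_repr}, and the normalisation $\frac{g(D_p)}{g_{p^l}(D_p)}$ built into $T(k,l)$ (cf. Remark \ref{rem:struct_intertwining}) are what make the finite Weil factors produced by $\omega_p$ recombine precisely into the $\rho_L$-factor appearing in $f\mid_{\kappa,L}\gamma_j^{-1}$. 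Summing over $j$ then yields $\sum_M f\mid_{\kappa,L}M$ over $M\in\Gamma\backslash\Gamma m(p^{-k},p^{-l})\Gamma$, up to the scalar produced by the automorphy factors and the intertwiner normalisation.

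Finally I would collect the constants. The classical operator carries the factor $\det(m(p^{-k},p^{-l}))^{\kappa/2-1}=p^{-(k+l)(\kappa/2-1)}$, whereas the adelic sum is unnormalised; tracking the powers of $\det\gamma_j$ hidden in $j(\gamma_j^{-1}g_\tau,i)$ against $j(g_\tau,i)$, together with the intertwiner constant, shows that the two normalisations differ by exactly $p^{(k+l)(1-\frac{\kappa}{2})}$, the factor recorded in \eqref{eq;compatibility_notcoprime}. For a prime $p\nmid|D|$ the representation $\omega_p$ is trivial, $T_{k,l}=\mathbbm{1}_{\calK_pm(p^k,p^l)\calK_p}\id$, and the computation collapses to the scalar-valued compatibility already established by Werner (\cite{We}, Theorem 53), so only the case $p\mid|D|$ requires the full argument above. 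I expect the main obstacle to be precisely the strong-approximation dictionary of the third paragraph: matching the right cosets of $\calK_pm(p^k,p^l)\calK_p$ with the left cosets defining $T(m(p^{-k},p^{-l}))$ with the correct multiplicities, and checking that the $\omega_p$-intertwiner $T(k,l)$ together with its Gauss-sum normalisation reproduces the $\rho_L$-action term by term — this is exactly where the slightly unusual normalisation of the Hecke algebra generators becomes indispensable.
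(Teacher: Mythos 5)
Your proposal is correct and follows essentially the same route as the paper's proof: reduction to $g=g_\infty\times 1_f$ via the equivariance properties, Gelbart's trick of writing $\iota_p(x_p)=\gamma(x_p^{-1}\times k(x_p))$ with $\gamma$ the diagonal embedding so that the inverses of the right-coset representatives of $\calK_pm(p^k,p^l)\calK_p/\calK_p$ become left-coset representatives of $\Gamma\backslash\Gamma m(p^{-k},p^{-l})\Gamma$, recombination of the local Weil factors into $\rho_L$ via \eqref{eq:adelic_weil_rerpr} and the normalisation built into $T(k,l)$, and finally the bookkeeping of determinant powers producing $p^{(k+l)(1-\kappa/2)}$. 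The "main obstacle" you flag is exactly the step the paper settles by the explicit decompositions \eqref{eq:decomp_x_sb} and \eqref{eq:decomp_x_b} of $x_{s,b}^{-1}$ and $x_b^{-1}$ inside $\Gamma m(p^{-k},p^{-l})\Gamma$.
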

\begin{proof}
  We know from Lemma \ref{lem:properties_f} that for any $g=\gamma(g_\infty\times k)\in \calG(\A)$ we have 
\begin{align*}
  \calT^{T_{k,l}}(F_f) (\gamma(g_\infty\times k)) &=  \calT^{T_{k,l}}(F_f)((g_\infty\times 1_f)(1\times k))\\
  &=\omega_f^{-1}(k) \calT^{T_{k,l}}(F_f)(g_\infty\times 1_f).
  \end{align*}
    The same holds for $F_{p^{(k+l)(\frac{\kappa}{2}-1)}T(m(p^{-k},p^{-l}))f}$ since it is an element of $A_{\kappa}(\omega_f)$. Hence, it suffices to prove \eqref{eq;compatibility_notcoprime} for $g= g_\infty\times 1_f$.
    
  The proof is an adaptation of the one of Lemma 3.7 in \cite{Ge}.  We have
  \begin{equation}\label{eq:adel_hecke_op}
    \begin{split}
      \calT^{T_{k,l}}(F_f)(g)&=\sum_{x_p\in \calQ_p/\calK_p}R^{T_{k,l}}(\iota_p(x_p))F_f(g\iota_p(x_p))\\
      &=\sum_{x_p\in \calK_pm(p^k,p^l)\calK_p/\calK_p}R^{T_{k,l}}(\iota_p(x_p))F_f(g\iota_p(x_p)),
    \end{split}
  \end{equation}
 where the last equation is due to Remark \ref{rem:prop_adelic_hecke_op}, i), and the fact that $T_{k,l}$ is supported on the double coset $\calK_pm(p^k,p^l)\calK_p$.  Following an idea of Gelbart, we set for $x_p\in \calK_pm(p^k,p^l)\calK_p/\calK_p$ 
  \begin{align*}
      & \gamma = (x_p,\dots, x_p,\dots)\in \calG(\Q),\\
      & k(x_p) = (x_p^{-1},\dots,x_p^{-1}, 1_p, x_p^{-1},\dots)\in \calK,\\
      & x_p^{-1}\in \calQ_\infty,
    \end{align*}
    where the $1_p$ in $k(x_p)$ is at $p$-th place.
    With these notations it is easily verified that
    \[
    \iota_p(x_p) = \gamma(x_p^{-1}\times k(x_p)).
    \]
    Therefore, the right-hand side of \eqref{eq:adel_hecke_op} becomes
     \begin{align*}
      \sum_{x_p\in \calK_pm(p^k,p^l)\calK_p/\calK_p}R^{T_{k,l}}(\iota_p(x_p))F_f(\gamma(x_p^{-1}g_\infty \times k(x_p))).
      \end{align*}
    Using the fact that  $F_f\in A_{\kappa}(\omega_f)$ and equation \eqref{eq:component_automorphic_form} subsequently, we find that the latter expression is equal to

    \begin{align*}
      &  \sum_{x_p\in \calK_pm(p^k,p^l)\calK_p/\calK_p}R^{T_{k,l}}(\iota_p(x_p))\omega_f^{-1}(k(x_p))F_f(x_p^{-1}g_\infty\times 1_f) \\
      &= \sum_{x_p\in \calK_pm(p^k,p^l)\calK_p\calK_p}j(x_p^{-1}g_\infty,i)^{-\kappa}\sum_{\lambda\in D}f_\lambda(x_p^{-1}g_\infty i)R^{T_{k,l}}(\iota_p(x_p))(\omega_f^{-1}(k(x_p))\varphi_\lambda).
    \end{align*}
    Decomposing $R^{T_{k,l}}$ and $\omega_f$ into its local factors,  yields
    \begin{equation}\label{eq:adelic_hecke_op_local_comp}
      \begin{split}
      &      \sum_{x_p\in \calK_pm(p^{k},p^l)\calK_p/\calK_p}j(x_p^{-1}g_\infty,i)^{-\kappa}\times\\
      &\sum_{\lambda\in D}f_\lambda(x_p^{-1}g_\infty i)\bigotimes_{q\not=p}\omega_q^{-1}(x_p^{-1})\varphi_q^{(\lambda_q)}\otimes T_{k,l}(x_p)(\omega_p^{-1}(1_p)\varphi_p^{(\lambda_p)}).  \\
        \end{split}
    \end{equation}

To further simplify the right-hand side of \eqref{eq:adelic_hecke_op_local_comp}, we evaluate $\omega_q^{-1}(x_p^{-1})$ and $T_{k,l}(x_p)$ on a concrete set of representatives  $x_p$.  To this end, we first assume $k < l$. 
    It is easily seen that Lemma 13.4 of \cite{KL} carries over to our situation.
    Keeping this in mind, we can conclude that
    \begin{equation}\label{eq:right_cosets}
      \begin{split}
        &\left\{x_{s,b}=m(p^k,p^k)\kzxz{p^{s}}{b}{0}{p^{l-k-s}}\; |\; s=1,\dots l-k-1, b\in (\Z/p^s\Z)^\times\right\}\\
        &\cup \left\{x_b = m(p^k,p^k)\kzxz{p^{l-k}}{b}{0}{1}\;|\; b\in \Z/p^{l-k}\Z\right\}  \cup \left\{m(p^k,p^k)m(1,p^{l-k})\right\}
        \end{split}
      \end{equation}
is a set of representatives of $\calK_pm(p^{k},p^l)\calK_p/\calK_p$ for any prime $p$.  We now distinguish the cases $p\mid |D|$ and $p\nmid |D|$. The latter is easier and will be postponed to the end of the proof.

The decomposition 
\begin{equation}\label{eq:decomp_x_sb}
x_{s,b}^{-1} = \kzxz{r}{-b}{t}{p^s}m(p^{-k},p^{-l})n\_(-p^{l-k-s}t) \in \Gamma m(p^{-k},p^{-l})\Gamma
\end{equation}
with $rp^s+bt= 1$ and
\begin{equation}\label{eq:decomp_x_b}
x_b^{-1} = w m(p^{-k},p^{-l})w^{-1}n(-b)\in \Gamma m(p^{-k},p^{-l})\Gamma
\end{equation}
can easily be verified. Since $\Gamma\subset \calK_q$ for all primes $q$, these decomposition can also be interpreted as decomposition in $\calK_qm(p^{-k},p^{-l})\calK_q$
 for all primes $q$.  If $q\not=p$, we may utilize  Definition \ref{def:ext_local_weil_repr}  and obtain 
 \begin{equation}\label{eq:local_weil_rerpr}
   \begin{split}
  & \omega_q^{-1}(x_{s,b}^{-1}) = \omega_q^{-1}(n\_(-p^{l-k-s}t))\omega_q^{-1}(m(p^{-k},p^{-l}))\omega_q^{-1}(\kzxz{r}{-b}{t}{p^s}),\\
     & \omega_q^{-1}(x_b^{-1}) = \omega_q^{-1}(w^{-1}n(-b))\omega_q^{-1}(m(p^{-k},p^{-l}))\omega_q^{-1}(w).
     \end{split}
\end{equation}
 By Definition \ref{def:spherical_hecke_algebra}, we further have
 \begin{equation}\label{eq:local_hecke_algebra}
  \begin{split}
  & T_{k,l}(x_{s,b}) = \omega_p(n\_(-p^{l-k-s}t)^{-1})\circ T_{k,l}(m(p^{k},p^l))\circ \omega_p(\kzxz{r}{-b}{t}{p^s}^{-1}),\\
    & T_{k,l}(x_b) = \omega_p((w^{-1}n(-b))^{-1})\circ T_{k,l}(m(p^{k},p^l))\circ \omega_p(w^{-1}).
    \end{split}
 \end{equation}
 Following the proof of Theorem \ref{rem:hecke_algebra_mp}, i), we obtain
 \begin{equation}\label{eq:local_weil_m_p_k_l}
 \omega_q^{-1}(m(p^{-k},p^{-l}))\varphi_q^{(\mu_q)}  = \frac{g(D_q)}{g_{p^k}(D_q)}\varphi_q^{(p^{(l-k)/2}\mu_q)} = \frac{g(D_q)}{g_{p^l}(D_q)}\varphi_q^{(p^{(l-k)/2}\mu_q)},
 \end{equation}
 where for the last equation we have used that $p^{k+l}$ is square and the last equation of \eqref{eq:weil_repr_scalar_matrix}.
 Moreover, comparing \eqref{eq:intertwiner_1} with \eqref{eq:local_weil_double_coset_1}, it becomes apparent that the identity
\[
T_{k,l}(m(p^k,p^l))\varphi_p^{(\mu_p)} = \omega_p^{-1}(m(p^{-k},p^{-l}))\varphi_p^{(\mu_p)}
\]
holds.
 Replacing $\omega_q^{-1}(x_p)$ and $T_{k,l}(x_p)$ in \eqref{eq:adelic_hecke_op_local_comp} with the expressions calculated before and piecing together the local Weil representations, we arrive at
 \begin{equation}\label{eq:finite_weil_m_p_k_l}
    \omega_f^{-1}(m(p^{-k},p^{-l}))\varphi^{(\lambda)} = \frac{g(D_p^\perp)}{g_{p^l}(D_p^\perp)}\varphi^{(p^{(l-k)/2}\lambda)}
\end{equation}
 and
\begin{equation}\label{eq:hecke_op_explicit}
  \calT^{T_{k,l}}(F)(g_\infty\times 1_f) = \sum_{x_p\in \calK_pm(p^{k},p^l)\calK_p/\calK_p}j(x_p^{-1}g_\infty,i)^{-\kappa}\sum_{\lambda\in D}f_\lambda(x_p^{-1}g_\infty i)\omega_f^{-1}(x_p^{-1})\varphi^{(\lambda)}.
   \end{equation}
On the other hand, it is well known that
\[
\{x_{s,b}^{-1}\;|\;s=1,\dots l-k-1, b\in (\Z/p^s\Z)^\times\}\cup \{x_b^{-1}\; |\;  b\in \Z/p^{l-k}\Z\} \cup \{m(p^k, p^l)^{-1}\}
\]
is a set of representatives of $\Gamma\bs\Gamma m(p^{-k},p^{-l})\Gamma$.
In view of \eqref{eq:decomp_x_sb} and \eqref{eq:decomp_x_b} we find

\begin{align*}
  & \rho_{L}^{-1}(x_{s,b}^{-1}) =\rho_{L}^{-1}(n\_(-p^{l-k-s}t))\rho_{L}^{-1}(m(p^{-k},p^{-l}))\rho_{L}^{-1}(\kzxz{r}{-b}{t}{p^s}), \\
  & \rho_{L}^{-1}(x_b^{-1}) = \rho_{L}^{-1}(w^{-1}n(-b)) \rho_{L}^{-1}(m(p^{-k},p^{-l}))\rho_{L}^{-1}(w),
\end{align*}
where $\rho_{L}^{-1}(m(p^{-k},p^{-l}))\frake_\lambda = \frac{g(D_p^\perp)}{g_{p^l}(D_p^\perp)}\frake_{p^{(l-k)/2}\lambda}$ by \eqref{eq:weil_diagonal_matrix} or  \cite{St2}, (4.10). 

Thus, taking \eqref{eq:finite_weil_m_p_k_l} and  \eqref{eq:adelic_weil_rerpr} into account, we find that the right-hand side of \eqref{eq:hecke_op_explicit} equals
  \begin{align*}
   &  j(g_\infty,i)^{-k}\sum_{x\in \Gamma/\Gamma m(p^{-k},p^{-l})\Gamma} j(x,g_\infty i)^{-k}\sum_{\lambda\in D}f_\lambda(x(g_\infty i))\rho_L^{-1}(x)\frake_\lambda \\
    & =F_{(p^{2l})^{1-k/2}T(m(p^{-k},p^{-l}))f}(g_\infty\times 1_f). 
  \end{align*}

  For $k=l$ the set $\calK_p m(p^k,p^k)\calK_p/\calK_p$ consists  only of the element $m(p^k,p^k)$. Following the steps made before for the case $k<l$, one finds
  \begin{align*}
     \calT^{T_{k,k}}(F)(g_\infty\times 1_f) &= \frac{g(D_p^\perp)}{g_{p^k}(D_p^\perp)}j(g_\infty,i)^{-\kappa}\sum_{\lambda\in D}f_\lambda(g_\infty i)\varphi^{(\lambda)}\\
    &=\frac{g(D_p^\perp)}{g_{p^k}(D_p^\perp)}F_f(g_\infty\times 1_f).
    \end{align*}
  On the other hand, the Hecke operator $T(m(p^{-k},p^{-k}))$ acts just by multiplication with $\frac{g(D_p^\perp)}{g_{p^k}(D_p^\perp)}$, 
and once again the desired result follows. 

The proof for $p\nmid |D|$ starts again  with \eqref{eq:adelic_hecke_op_local_comp}. Let $T_{k,l} = \mathbbm{1}_{\calK_pm(p^k,p^k)\calK_p}\id_{S_{L_p}}$
Then, since $S_{L_p} = \C\varphi_p^{(0)}$ and $\omega_p$ is trivial, 
  \begin{align*}
    T_{k,l}(x_p)(\omega_p^{-1}(1_p)\varphi_p^{(0)}) &= T_{k,l}(m(p^k,p^l))\varphi_p^{(0)}\\
    &=\varphi_p^{(0)}.
  \end{align*}
  Therefore,
  \[
  \bigotimes_{q\not=p}\omega_q^{-1}(x_p^{-1})\varphi_q^{(\lambda_q)}\otimes T_{k,l}(x_p)(\omega_p^{-1}(1_p)\varphi_p^{(0)}) = \omega_f^{-1}(x_p^{-1})\varphi^{(\lambda)}.
  \]
  By means of \eqref{eq:local_weil_m_p_k_l} and \eqref{eq:finite_weil_m_p_k_l} and the decompositions of $x_p^{-1}$ above, the identity \eqref{eq;compatibility_notcoprime} follows as
\[
\rho_L^{-1}(m(p^{-k},p^{-l}))\frake_\lambda = \frac{g(D)}{g_p^l(D)}\frake_\lambda
\]
 (cf. \cite{St2}, (4.10) and note that $D(p^l) = D$ if $(p,|D|)=1$. 
  \end{proof}

\begin{remark}\label{lem:action_hecke_alg}
  \begin{enumerate}
    \item[i)]
The identity \eqref{eq;compatibility_notcoprime} can be rephrased with the help of the isomorphism $\mathscr{A}$. Let $F\in A_{\kappa}(\omega_f)$ with the associated modular form $f_F\in S_\kappa(\rho_L)$ and $f_{\calT^{T_{k,l}}(F)}$ the modular form corresponding to $\calT^{T_{k,l}}(F)$. Then \eqref{eq;compatibility_notcoprime} is equivalent to
\begin{equation}\label{eq:equiv_compatibility_rel_coprime}
f_{\calT^{T_{k,l}}(F)}  = p^{(k+l)(\kappa/2-1)}T(m(p^{-k},p^{-l}))(f_F). 
  \end{equation}
\item[ii)]
  It is also  an immediate but important consequence of  Theorem \ref{thm;action_hecke_op_notcoprime} 
  that $f\in S_\kappa(\rho_L)$ is a common eigenform for the Hecke operators $T(m(p^{-k},p^{-l}))$ for alle primes $p$ and all $(k,l)\in \Lambda_+$ if and only if the associated automorphic form $F_f$ is a common eigenform for the operators $\calT^{T_{k,l}}$ for all primes $p$ and all generators $T_{k,l}\in \calH^+(\calQ_p//\calK_p,\omega_p)$ ($\calH(\calQ_p//\calK_p,\omega_p)$ if $p$ and $|D|$ are coprime).   
  Remark \ref{rem:prop_adelic_hecke_op}, ii) allows us to extend this statement to the whole Hecke algebra $\calH^+(\calQ_p//\calK_p,\omega_p)$ (and $\calH(\calQ_p//\calK_p,\omega_p)$). Thus,
  \[
  \calT^{T}(F_f) = \lambda_{F_f,p}(T)F_f 
  \]
  for all $T\in \calH^+(\calQ_p//\calK_p,\omega_p)$ (and $\calH(\calQ_p//\calK_p,\omega_p)$) 
  if and only if  $f\in S_\kappa(\rho_L)$ is a common Eigenform for all Hecke operators $T(m(p^{-k},p^{-l}))$. As in the classical scalar valued theory, we may then conclude that the map  
  \[
  \lambda_{F,p}:  \calH^+(\calQ_p//\calK_p,\omega_p)\rightarrow \C,\quad T\mapsto \lambda_{F,p}(T)
  \]
associated to an eigenform $F\in A_\kappa(\omega_f)$  defines a $\C$- algebra homomorphism. 
\end{enumerate}
\end{remark}

\end{document}